\documentclass[11pt, notitlepage]{article}
\usepackage{amssymb,amsmath,comment}
\usepackage{amsthm}
\catcode`\@=11 \@addtoreset{equation}{section}
\def\thesection{\arabic{section}}

\def\theequation{\thesection.\arabic{equation}}
\catcode`\@=12
\usepackage{colortbl}
\usepackage{hyperref}
\usepackage[mathscr]{eucal}
\usepackage{epsf}
\usepackage{esint}
\usepackage{a4wide}

\newcommand{\fa} {\forall}

\newcommand{\al} {\alpha}

\newcommand{\de} {\delta}

\newcommand{\N}{\mathbb{N}}
\newcommand{\R}{\mathbb{R}}

\newcommand{\De} {\Delta}

\newcommand{\noi} {\noindent}

\setcounter{page}{1}\pagestyle{myheadings}\markboth{\small } {\small Multiplicity result of mixed local-nonlocal singular problem in two dimension}
\usepackage[all]{xy}
\catcode`\@=11
\def\theequation{\@arabic{\c@section}.\@arabic{\c@equation}}
\catcode`\@=12

\def\N{{I\!\!N}}

\newtheorem{Theorem}{Theorem}[section]
\newtheorem{Lemma}[Theorem]{Lemma}

\newtheorem{Remark}[Theorem]{Remark}
\newtheorem{Definition}[Theorem]{Definition}

\begin{document}

{\vspace{0.01in}}

\title{ Multiplicity results for mixed local-nonlocal equations with singular and critical exponential nonlinearity in $\mathbb{R}^2$}

\author{Sanjit Biswas\footnote{Department of Mathematics and Statistics, Indian Institute of Technology Kanpur, Kanpur-208016, Uttar Pradesh, India, Email: sanjitbiswas410@gmail.com }}

\maketitle

\begin{abstract}\noindent
In this article, we prove the existence of at least two positive weak solutions for a mixed local-nonlocal singular problem in the presence of critical exponential nonlinearity in dimension two. The novelty of this work is the inclusion of a variable singular exponent in the context of mixed operator and critical exponential nonlinearity in $\R^2$. Our approach is based on sub-solution super-solution technique, combined with variational methods.
\end{abstract}
\noi{Keywords: Mixed local-nonlocal equation, variable singularity, critical nonlinearity in dimension two, Ekeland's variational principle.}

\maketitle

\bigskip

\tableofcontents

\section{Introduction and main results}
Given a bounded domain $\Omega\subset\R^2$ with Lipschitz boundary, we consider the following semi-linear mixed local-nonlocal equation
\begin{align}\label{ME}
    \begin{cases}
        &\mathcal{M}_\epsilon u:=-\De u+\epsilon(-\De)^su=\frac{\lambda}{u^{\de(x)}}+h(x,u) \text{ in }\Omega,\nonumber\\
    &u>0 \text{ in }\Omega \text{ and } u=0 \text{ in }\R^2\setminus\Omega,\tag{$P^\epsilon_\lambda$}
    \end{cases}
\end{align}
where $s\in(0,1)$, $\epsilon>0$ is a constant, $\lambda>0$ is a parameter, and $\delta\in C(\overline{\Omega};(0,1))$, $h\in C^1(\overline{\Omega}\times\R;\R)$. Here, $\De$ denotes the usual Laplacian and the operator $(-\De)^s$ is known as fractional Laplacian and defined by
 \begin{equation*}
     (-\De)^su(x):=P.V.\int_{\R^2} \frac{(u(x)-u(y))}{|x-y|^{2+2s}}\;dy,
 \end{equation*}
 where $s\in(0,1)$ and P.V. stands for the principle value integral. Note that some definitions of fractional Laplacian include a normalization constant, but for simplicity, we omit it here. The equations involving mixed local and nonlocal operators have numerous applications in stochastic processes, image processing, biology, etc.; for more details, we refer to \cite{serena} and the references therein. The study of mixed operators has been a central focus of many researchers; for instance, see \cite{ Valdinoci23, BiagCCM, Mingioni24, ValdinociProc, Juha22, Valdinoci24, Valdinoci25} and the references therein. In this article, our main concern is to establish the existence of two positive solutions to (\ref{ME}) under some assumptions on the function $h.$ To this end, we define $$H(x,s)=\int_0^s h(x,t)\;dt\text{ for all }s>0,\text{ and for all }x\in\overline{\Omega},$$ and impose the following conditions on $h:\overline{\Omega}\times\R\to\R$:
\begin{enumerate}
    \item [$(H1)$] $h\in C^1(\overline{\Omega}\times\R;\R),\;h(x,0)=0$ and $h(x,\cdot):[0,\infty)\to\R$ is non-decreasing for every $x\in \overline{\Omega}$.
    \item [($H2$)] There exists $\alpha_0>0$ such that $$\lim_{s\to\infty}\frac{h(x,s)}{e^{\alpha s^2}}=\begin{cases}
        0, \text{ if }\alpha>\alpha_0,\\
        \infty, \text{ if }\alpha<\alpha_0,
    \end{cases}$$ 
     uniformly for $x\in \overline{\Omega}$.
    \item [($H3$)] $\limsup_{s\to 0}\frac{2H(x,s)}{s^2}<\lambda_1,$ where $\lambda_1$ is the first eigen value of the operator $-\De$ with Dirichlet's boundary condition in $\Omega$.
    \item [$(H4)$] There exists $M>0$ such that $H(x,s)\leq M(1+h(x,s))$ for all $s>0$ and for all $x\in\overline{\Omega}$.
    \item [$(H5)$] For every $\alpha>\alpha_0$, $$\lim_{s\to\infty}\frac{\frac{\partial h}{\partial s}(x,s)}{e^{\alpha s^2}}=0$$ uniformly for $x\in\overline{\Omega}$.
    \item [$(H6)$] There exists $\sigma\in [0,1)$ such that $\lim_{s\to\infty}\frac{h(x,s)se^{s^\sigma}}{e^{\alpha_0s^2}}=\infty$ uniformly for $x\in\overline{\Omega}.$
\end{enumerate}
 One can check that $h(x,s)=\mu se^{4\pi s^2}$, for $\mu<\lambda_1$ satisfies all the above assumptions with $\alpha_0=4\pi$ and any $\sigma\in [0,1)$. One prototype of the equation (\ref{ME}) is the following:
\begin{align*}
    \begin{cases}
        &\mathcal{M}_\epsilon u:=-\De u+\epsilon(-\De)^su=\frac{\lambda}{u^{\de(x)}}+\mu ue^{4\pi u^2} \text{ in }\Omega,\nonumber\\
    &u>0 \text{ in }\Omega \text{ and } u=0 \text{ in }\R^2\setminus\Omega.
    \end{cases}
\end{align*}
 To understand the context of our study, we review the existing literature on singular problems, which have been the focus of extensive research over the past few decades. We start with the purely local case:
\begin{align}\label{I1}
    \begin{cases}
        -\De u=\frac{f}{u^{\delta(x)}} \text{ in }\Omega,\\
        u>0 \text{ in }\Omega\text{ and }u=0\text{ in }\R^n\setminus\Omega.
    \end{cases}
\end{align}
This problem has been widely studied for both variable and constant exponent cases. For the positive constant $\delta$, Crandall-Rabinowitz-Tartar \cite{Crand} proved the existence of a classical solution to (\ref{I1}) under specific regularity assumptions on the non-negative function $f$ and the domain $\Omega.$ Lazer-Mckenna proved the existence result under some weaker assumption on $\Omega$ in \cite{Lazer} and showed that the solution belongs to $H^1(\Omega)$ if and only if $0<\delta<3$, but does not belong to $C^1(\overline{\Omega})$ for $\delta>1$. For some non-negative integrable function $f$, the existence and regularity results of weak solutions to (\ref{I1}) were further studied in \cite{LB}.
In the case of variable exponent, for positive $f\in L^m(\Omega)$ with $m\geq 1$, existence results are established in \cite{Jose16} and the associated quasi-linear equations has been studied in \cite{Alves20, KB21, PG} and the references therein.\\
The multiplicity-related questions for the equation
\begin{align}\label{I2}
    \begin{cases}
        -\De u=\frac{\lambda}{u^{\delta}}+h(x,u) \text{ in }\Omega,\\
        u>0 \text{ in }\Omega\text{ and }u=0\text{ in }\R^n\setminus\Omega
    \end{cases}
\end{align}
has been addressed by many authors. For the constant exponent case, in \cite{Haitao}, Y. Haitao proved that there exists $\Lambda^*>0$ such that equation (\ref{I2}) admits at least two solutions for every $\lambda\in(0,\Lambda^*)$, one solution for $\lambda=\Lambda^*$, and no solution for $\lambda>\Lambda^*$, provided $n\geq 3,$ $h(x,s)\equiv s^q$, $0<\delta<1<q\leq 2^*-1.$ This result is generalized for the nonlinear case by Giacomoni-Schindler-Peter in \cite{Jac}. In dimension two, the study of critical problem has been initiated by Adimurthi \cite{ADIPISA}, where some existence result is obtained. In dimension two, the multiplicity result to equation (\ref{I2}) is obtained by Adimurthi-Giacomoni \cite{ADICCM} for Moser-Trudinger type nonlinearity $h$ and $0<\delta<3.$ For more multiplicity-related results, we refer the reader to \cite{Sarika24, Srenadh14, Srenadh18} and the references therein.\\
In the purely non-local case, the equation
\begin{align}\label{I3}
    \begin{cases}
        (-\De)^s u=\frac{f}{u^{\delta(x)}} \text{ in }\Omega,\\
        u>0 \text{ in }\Omega\text{ and }u=0\text{ in }\R^n\setminus\Omega,
    \end{cases}
\end{align}
has been studied for both constant and variable exponent cases. For the constant $\delta\in(0,1)$, in \cite{Yang}, the author proved the existence of a unique classical solution for $f\equiv 1.$ When $\delta>0$, Sciunzi et al. analyzed the quasi-linear version of \ref{I3} and obtained existence, uniqueness and qualitative properties of weak solutions in \cite{Scuinzi17}. For the variable case, we refer the reader to \cite{GTCPAA}, where the authors have established existence and regularity results, provided $f$ is a non-negative function belonging to $L^m(\Omega)\setminus\{0\}$ for some $m\geq 1.$\\
 The nonlocal perturbed problem of the form
\begin{align}\label{I4}
    \begin{cases}
        (-\De)^s u=\frac{\lambda }{u^{\delta(x)}}+ h(x,u) \text{ in }\Omega,\\
        u>0 \text{ in }\Omega\text{ and }u=0\text{ in }\R^n\setminus\Omega.
    \end{cases}
\end{align}
was studied in \cite{Barrio15, ST16}. When $\delta$ is a constant and $\delta\in(0,1]$, Srenadh-Tuhina \cite{ST16} proved that there exists $\Lambda^*>0$ such that for every $\lambda\in(0,\Lambda^*)$, (\ref{I4}) admits at least two positive solutions, provided $h(x,s)\equiv s^{2^*_s-1},$ where $2^*_s=\frac{2n}{n-2s}$. The associated nonlinear case was further explored in \cite{sekhar19, Tuhina19} and the reference therein. When $\delta$ is a variable, in \cite{GTCPAA}, the authors studied the associated quasi-linear version of (\ref{I4}) and proved some existence and multiplicity results. For more related results, we refer to \cite{Jac19} and the references therein.\\
The mixed local-nonlocal equation of the form
\begin{align}\label{I5}
    \begin{cases}
       -\De u+(-\De)^s u=\frac{f}{u^{\delta(x)}}\text{ in }\Omega,\\
        u>0 \text{ in }\Omega\text{ and }u=0\text{ in }\R^n\setminus\Omega
    \end{cases}
\end{align}
has also been studied for both constant and variable exponent cases. For constant $\delta>0$, the existence and regularity results of weak solutions to (\ref{I5}) were obtained in \cite{Arora}, provided $0\leq f\in L^m(\Omega)\setminus\{0\}$ for some $m\geq 1.$ Garain and Ukhlov \cite{PU22} considered the quasilinear case and obtained some existence, uniqueness, regularity and symmetric property of the solution. For variable $\delta$, existence and regularity results have been discussed in \cite{PG24, Biroudi} and the references therein.\\
Further, when $\delta$ is constant, the perturbed problem of the form
\begin{align}\label{I7}
    \begin{cases}
       -\De u+(-\De)^s u=\frac{\lambda}{u^{\delta(x)}}+h(x,u)\text{ in }\Omega,\\
        u>0 \text{ in }\Omega\text{ and }u=0\text{ in }\R^n\setminus\Omega
    \end{cases}
\end{align}
is explored in \cite{PGJGA}, where the author showed that there exists $\Lambda>0$ such that for every $\lambda\in(0,\Lambda)$, \ref{I7} admits at least two solutions, provided the constant $\delta\in(0,1)$, and $h(x,s)=s^q\; (1<q<2^*-1)$. The quasi-linear version of (\ref{I7}) has been discussed in \cite{KB24}. The critical case was recently addressed by Biagi and Vecchi \cite{BiagiCV}, who considered the equation:
\begin{align}\label{I6}
    \begin{cases}
        -\De u+\epsilon(-\De)^s u=\frac{\lambda}{u^{\delta}}+u^{2^*-1}\text{ in }\Omega,\\
        u>0 \text{ in }\Omega\text{ and }u=0\text{ in }\R^n\setminus\Omega,
    \end{cases}
\end{align}
for $\delta\in(0,1)$, $n\geq 3$ and proved that there exist $\Lambda>0$ and $\epsilon_0>0$ such that for every $\epsilon\in(0,\epsilon_0)$ and $\lambda\in(0,\Lambda),$ (\ref{I6}) possesses at least two positive solutions in $X.$\\
The multiplicity phenomenon for variable exponent singular problems involving mixed local and nonlocal operators and critical nonlinearity in dimension two remains unknown, even for constant $\delta$. In this work, we address this gap by focusing on the variable exponent case. Our proof relies on the sub-solution super-solution technique and variational approach, following arguments from \cite{ADICCM, BiagiCV, Haitao}. We also believe that our results can be proved for subcritical nonlinearity $h$ as well, but we prefer to focus on critical nonlinearity due to its delicate nature. In this article, our main results can be stated as follows:
\begin{Theorem}\label{T1}
    Assume that $\delta:\overline{\Omega}\to(0,1)$ is a continuous function and $h$ satisfies (H1)-(H5). Then, for every $\epsilon>0$, there exists $\Lambda_\epsilon>0$ such that 
    \begin{enumerate}
        \item [(i)] for every $\lambda\in(0,\Lambda_\epsilon]$, the problem (\ref{ME}) admits a positive solution in $X$ in the sense of Definition \ref{def};
        \item[(ii)] for every $\lambda>\Lambda_\epsilon$, the problem (\ref{ME}) has no solution.
    \end{enumerate}
    Furthermore, if $\lambda\in(0,\Lambda_\epsilon)$, then the above solution is a local minimizer of the energy functional 
    \begin{align}\label{energy}
    E_{\lambda,\epsilon}(u)=\frac{1}{2}\int_\Omega |u|^2\;dx+\frac{\epsilon}{2}\int_{\R^2}\int_{\R^2}\frac{|u(x)-u(y)|^2}{|x-y|^{2+2s}}\;dx\;dy-\lambda\int_\Omega\frac{|u|^{1-\delta(x)}}{(1-\delta(x))}\;dx-\int_\Omega H(x,u)\;dx,
\end{align}
defined on $X.$    
\end{Theorem}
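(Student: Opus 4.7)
The plan is to define
\[
\Lambda_\epsilon := \sup\{\lambda>0 : (P^\epsilon_\lambda) \text{ admits a positive weak solution in } X\},
\]
and then verify $0<\Lambda_\epsilon<\infty$, prove the monotonicity property (solvability at some $\mu$ implies solvability for all $\lambda\in(0,\mu)$), and finally obtain the solution at the endpoint $\lambda=\Lambda_\epsilon$ by passing to the limit. To show $\Lambda_\epsilon>0$, I would produce a subsolution and a supersolution for all sufficiently small $\lambda$. A convenient subsolution is $\underline{u}_\lambda=c\lambda^{1/(1+\delta^+)}\phi_1^{\beta}$ (with $\phi_1$ the first Dirichlet eigenfunction of $-\Delta$ and $\beta$ chosen to dominate both the distance-to-boundary singularity and the fractional tail), while a supersolution can be built by solving the purely singular mixed problem $\mathcal{M}_\epsilon w=\lambda/w^{\delta(x)}+C$ for a large constant $C$ controlling $h$ on a bounded range; the existence of such $w$ follows from results already available for the mixed singular problem. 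The sub–super-solution method, together with a standard pointwise truncation of $h$ and a monotone iteration in $X$, then yields a weak solution $u_\lambda\in X$ in the order interval $[\underline{u}_\lambda,\bar{u}_\lambda]$ for every small $\lambda$.

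To bound $\Lambda_\epsilon$ from above, I would test the equation against $\phi_1$: assumption $(H2)$ gives $h(x,s)\geq 2\lambda_1 s$ once $s$ is large, and combined with the positivity of the singular term and the nonnegativity contribution of the nonlocal part $\int\int (u(x)-u(y))(\phi_1(x)-\phi_1(y))/|x-y|^{2+2s}\geq 0$, one arrives at $\lambda_1\int u\phi_1\geq \lambda_1\int u\phi_1 +\lambda\int\phi_1/u^{\delta(x)}+\epsilon(\ldots)$, a contradiction for $\lambda$ too large. Monotonicity follows from the standard device: if $u_\mu$ solves $(P^\epsilon_\mu)$ with $\mu>\lambda$, then $u_\mu$ is a supersolution of $(P^\epsilon_\lambda)$, and by choosing $\underline{u}_\lambda$ as above with constant small enough, one obtains a solution $u_\lambda\leq u_\mu$. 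The endpoint $\lambda=\Lambda_\epsilon$ is handled by taking $\lambda_n\uparrow\Lambda_\epsilon$, using the monotone family $u_{\lambda_n}$, establishing uniform $X$-bounds via the Moser–Trudinger inequality in $\mathbb{R}^2$ (which controls $\int H(x,u_n)$ once $\|u_n\|_X$ is bounded), and passing to the limit with the help of the singular-integrable bound $\int u_n^{1-\delta(x)}\leq C$ guaranteed by the uniform subsolution $\underline{u}_{\Lambda_\epsilon}$.

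For the local minimizer statement, for $\lambda\in(0,\Lambda_\epsilon)$ I would fix $\mu\in(\lambda,\Lambda_\epsilon)$, choose $\bar{u}=u_\mu$ as a strict supersolution, and minimize $E_{\lambda,\epsilon}$ on the closed convex set
\[
\mathcal{K}=\{v\in X : \underline{u}_\lambda\leq v\leq \bar{u}\ \text{a.e.\ in }\Omega\}.
\]
On $\mathcal{K}$ the singular term $\int |v|^{1-\delta(x)}/(1-\delta(x))$ is continuous and the exponential term is dominated by $H(x,\bar{u})\in L^1$ (Moser–Trudinger in $\mathbb{R}^2$ applied to $\bar{u}$), so $E_{\lambda,\epsilon}$ is coercive and weakly lower semicontinuous on $\mathcal{K}$; hence a minimizer $u_0\in\mathcal{K}$ exists and, using the strict sub/supersolution property and a one-sided test function argument (perturb by $\pm\phi$ with $\phi\in C_c^\infty(\Omega)$, $\phi\geq 0$, and pass to the limit as the perturbation parameter tends to $0$), $u_0$ is in fact a weak solution. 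Finally, to upgrade $u_0$ from a minimizer on $\mathcal{K}$ to a local minimizer on $X$, I would argue by contradiction à la Brezis–Nirenberg: if $v_n\to u_0$ in $X$ with $E_{\lambda,\epsilon}(v_n)<E_{\lambda,\epsilon}(u_0)$, compare $v_n$ with its truncation $\widetilde{v}_n=\max\{\underline{u}_\lambda,\min\{v_n,\bar{u}\}\}\in\mathcal{K}$ and use the strict inequalities in the sub/supersolution definitions to show $E_{\lambda,\epsilon}(\widetilde{v}_n)\leq E_{\lambda,\epsilon}(v_n)$ with $\widetilde{v}_n\neq v_n$ generating a negative contribution, contradicting minimality of $u_0$ on $\mathcal{K}$.

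The main obstacle I expect is the truncation step in this last paragraph: because the energy contains both a nonlocal Gagliardo seminorm and a variable-exponent singular term, one must verify that cutting $v_n$ at the barriers genuinely decreases both the mixed Dirichlet energy and the $H$-term while keeping the variable singular integral finite. The nonlocal contribution is especially subtle, since truncation in $\Omega$ changes $v_n-v_n$ differences over the whole of $\mathbb{R}^2$ and requires the double-integral inequality
\[
\iint \frac{|\widetilde{v}_n(x)-\widetilde{v}_n(y)|^2}{|x-y|^{2+2s}}\,dx\,dy \leq \iint \frac{|v_n(x)-v_n(y)|^2}{|x-y|^{2+2s}}\,dx\,dy,
\]
which follows from the Lipschitz property of $t\mapsto\max\{a,\min\{t,b\}\}$ but has to be handled carefully jointly with the strict sub/supersolution inequalities in order to extract the strict decrease of the total energy needed to close the contradiction.
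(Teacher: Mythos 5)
Your high-level skeleton (define $\Lambda_\epsilon$ as the supremum, show it is finite and positive, construct solutions at each $\lambda\le\Lambda_\epsilon$ by ordered-interval minimization, then upgrade the minimizer on $\mathcal{K}$ to a local minimizer on $X$ by a Brezis--Nirenberg-type contradiction) matches the paper's strategy, but two of your key steps contain genuine gaps, and the paper sidesteps both.

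First, for the upper bound $\Lambda_\epsilon<\infty$ you propose testing against $\phi_1$, the first Dirichlet eigenfunction of $-\Delta$, and you assert that the nonlocal cross-term
$\iint\frac{(u(x)-u(y))(\phi_1(x)-\phi_1(y))}{|x-y|^{2+2s}}\,dx\,dy$
is nonnegative. This is false in general: the bilinear form $B_\epsilon(u,v)$ between two different nonnegative functions has no sign (take $u,v$ with disjoint supports to get a strictly negative cross term). The paper avoids this entirely by using the first eigenpair $(\mu_{1,\epsilon},\phi_\epsilon)$ of the \emph{full mixed operator} $\mathcal{M}_\epsilon$, so that $B_\epsilon(u,\phi_\epsilon)=\mu_{1,\epsilon}\int_\Omega u\phi_\epsilon$ identically and the nonlocal part is absorbed into the eigenvalue. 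If you insist on $\phi_1$, you would have to control the nonlocal cross-term by other means, which seems hopeless in this generality.

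Second, your local-minimizer argument hinges on the assertion that truncating $v_n$ at the barriers $\underline{u}_\lambda$ and $\bar{u}$ decreases the energy, justified by the Gagliardo-norm truncation inequality. But truncation by \emph{non-constant} $H^1$ functions does not, in general, decrease either $\int_\Omega|\nabla\cdot|^2$ or the singular term (the singular term actually favors \emph{increasing} $u$, since the sign in $E_{\lambda,\epsilon}$ is $-\lambda\int|u|^{1-\delta(x)}/(1-\delta(x))$). A pure ``truncation decreases energy'' step therefore fails. What the paper does instead (Lemma~\ref{L3}, following the decomposition in \cite{BiagiCV}) is to write $E_{\lambda,\epsilon}(u_n)\ge E_{\lambda,\epsilon}(u)+R_n+T_n$, where $R_n,T_n$ collect the cross terms, and then establish $R_n,T_n\ge 0$ for $n$ large using the sub-/super-solution inequalities, the mean value theorem, assumption (H5) on $\partial_s h$, and, crucially, the quantitative smallness of $\|\overline{v}_n\|$, $\|\underline{v}_n\|$ and $|\overline{S}_n|$, $|\underline{S}_n|$. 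The sub/super-solution property is used here not to get a pointwise strict inequality in the nonlinearity, but to bound the cross-terms $B_\epsilon(\bar{u},\overline{v}_n)$ and $B_\epsilon(\underline{u},\underline{v}_n)$ from below; without this one cannot close the contradiction.

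On the remaining points: your subsolution $c\lambda^{1/(1+\delta^+)}\phi_1^\beta$ is a plausible candidate but requires nontrivial barrier estimates for $(-\Delta)^s(\phi_1^\beta)$, and your supersolution $\mathcal{M}_\epsilon w=\lambda/w^{\delta(x)}+C$ only yields a supersolution of $(P^\epsilon_\lambda)$ if one separately shows $h(x,w)\le C$, i.e.\ an a priori $L^\infty$ bound on $w$. The paper bypasses all of this for $\Lambda_\epsilon>0$ by direct minimization of $E_{\lambda,\epsilon}$ on a small ball $B(0,R)\subset X$: for $\lambda$ small one has $E_{\lambda,\epsilon}>0$ on $\partial B(0,R)$ and $E_{\lambda,\epsilon}<0$ somewhere inside, so the infimum is attained in the interior, and the minimizer is shown to be a positive weak solution via one-sided perturbations. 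It then uses this bootstrap (a solution at $\bar\lambda\in(\lambda,\Lambda_\epsilon)$ serves as a supersolution at $\lambda$) and passes to the limit as $\lambda_n\uparrow\Lambda_\epsilon$ with uniform bounds obtained from (H4).
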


\begin{Theorem}\label{T2}
    Assume that $\delta:\overline{\Omega}\to(0,1)$ is a continuous function and $h$ satisfies (H1)-(H6). Then, there exists $\epsilon_1>0$ and $\Lambda>0$ such that for every $\epsilon\in(0,\epsilon_1)$ and $\lambda\in (0,\Lambda)$, the equation (\ref{ME}) admits at least two solutions $u_{\lambda,\epsilon}$ and $v_{\lambda,\epsilon}$ in $X$ with $0<u_{\lambda,\epsilon}<v_{\lambda,\epsilon}$ in $\Omega$.
\end{Theorem}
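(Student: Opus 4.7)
The plan is to construct the second solution $v_{\lambda,\epsilon}$ as a mountain-pass type critical point of the energy produced above the first solution $u_{\lambda,\epsilon}$ from Theorem~\ref{T1}. Since $u_{\lambda,\epsilon}$ is a local minimizer of $E_{\lambda,\epsilon}$ whenever $\lambda<\Lambda_\epsilon$, I would work with the shifted functional
\[
J_{\lambda,\epsilon}(w):=E_{\lambda,\epsilon}(u_{\lambda,\epsilon}+w^+)-E_{\lambda,\epsilon}(u_{\lambda,\epsilon}),\qquad w\in X,
\]
whose nontrivial critical points $w\ge 0$ produce a second solution $v_{\lambda,\epsilon}=u_{\lambda,\epsilon}+w>u_{\lambda,\epsilon}$ in $\Omega$. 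The shift eliminates the singularity: on $\{w^+>0\}$ the map $w\mapsto (u_{\lambda,\epsilon}+w^+)^{1-\delta(x)}$ is smooth because $u_{\lambda,\epsilon}$ is strictly positive inside $\Omega$, so $J_{\lambda,\epsilon}\in C^1(X,\R)$.

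Next I would verify the geometric hypotheses for $J_{\lambda,\epsilon}$. The local-minimizer property furnishes $\alpha,\rho>0$ with $J_{\lambda,\epsilon}(w)\ge\alpha$ on $\|w\|=\rho$, while (H1)--(H2) and a nonnegative test function $w_0$ supported where $u_{\lambda,\epsilon}>0$ give $J_{\lambda,\epsilon}(tw_0)\to-\infty$ as $t\to+\infty$ by the exponential superlinearity; hence some $w_1$ with $\|w_1\|>\rho$ and $J_{\lambda,\epsilon}(w_1)<0$ exists. Ekeland's variational principle then yields a Palais-Smale sequence $\{w_n\}$ at the min-max level
\[
c_{\lambda,\epsilon}:=\inf_{\gamma\in\Gamma}\sup_{t\in[0,1]}J_{\lambda,\epsilon}(\gamma(t))>0,
\]
with $\Gamma$ the usual path class from $0$ to $w_1$. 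Boundedness of $\{w_n\}$ in $X$ follows in Ambrosetti-Rabinowitz fashion from (H4), and (H5) together with the Moser-Trudinger inequality allow passage to the limit provided one can guarantee
\[
c_{\lambda,\epsilon}<\frac{2\pi}{\alpha_0},
\]
the critical Moser-Trudinger threshold for the mixed operator on $\Omega\subset\R^2$.

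The decisive step is thus the upper bound on $c_{\lambda,\epsilon}$. Following the strategy of \cite{ADICCM, BiagiCV}, I would test the min-max along a family $\{M_n\}$ of rescaled Moser functions concentrated at a point $x_0\in\Omega$ where $u_{\lambda,\epsilon}(x_0)>0$ and show
\[
\sup_{t\ge 0}J_{\lambda,\epsilon}(tM_n)<\frac{2\pi}{\alpha_0}
\]
for $n$ sufficiently large. The exponential reinforcement $h(x,s)se^{s^{\sigma}}/e^{\alpha_0 s^2}\to\infty$ in (H6) is precisely what converts the non-strict inequality obtained from plain (H2) into the strict one, once $H(x,u_{\lambda,\epsilon}+tM_n)$ is expanded along the concentrating family. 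The parameter $\epsilon$ enters through the Gagliardo seminorm of $M_n$, which does not vanish under concentration; taking $\epsilon<\epsilon_1$ small, uniformly in $\lambda$, absorbs this nonlocal contribution. Once the strict level bound is secured, the PS sequence converges strongly to some $\tilde w\not\equiv 0$ which is critical for $J_{\lambda,\epsilon}$, and testing against $\tilde w^-$ together with the strong maximum principle for $\mathcal{M}_\epsilon$ gives $\tilde w>0$ in $\Omega$, yielding the second solution.

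The main obstacle is exactly the strict inequality $c_{\lambda,\epsilon}<\frac{2\pi}{\alpha_0}$. The Moser-sequence estimate must track simultaneously the cross term $\int\nabla u_{\lambda,\epsilon}\cdot\nabla M_n$, the variable-exponent singular contribution $\int u_{\lambda,\epsilon}^{-\delta(x)}M_n$, the new perturbation $\int H(x,u_{\lambda,\epsilon}+tM_n)$, and the nonlocal Gagliardo semi-norm of $M_n$. Balancing the logarithmic loss inherent to concentrating Moser functions against the gain supplied by (H6), while keeping the fractional perturbation controlled uniformly in $\lambda$, is what forces the smallness restriction $\epsilon<\epsilon_1$ and makes this the delicate technical core of the proof.
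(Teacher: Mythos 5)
Your overall strategy mirrors the paper: perturb $u_{\lambda,\epsilon}$ along Moser functions $M_n$ centered in a ball $B_r(x_0)\Subset\Omega$ where $u_{\lambda,\epsilon}$ has a uniform positive lower bound, use (H6) to beat the borderline $2\pi/\alpha_0$ Moser--Trudinger threshold, take $\epsilon$ small to tame the Gagliardo seminorm of $M_n$, and close the argument with Ekeland. However, two steps in your proposal would not go through as stated, and the paper's handling of precisely these points is what shapes the actual proof.

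First, the claim $J_{\lambda,\epsilon}\in C^1(X,\R)$ is not justified. You argue smoothness ``on $\{w^+>0\}$'' because $u_{\lambda,\epsilon}>0$ inside $\Omega$, but the paper only establishes interior positivity of $u_{\lambda,\epsilon}$ ($u_{\lambda,\epsilon}\geq C(\omega)$ for $\omega\Subset\Omega$), with no boundary lower bound of Hopf type. The Gateaux derivative of the singular term contains $(u_{\lambda,\epsilon}+w^+)^{-\delta(x)}$, which blows up near $\partial\Omega$ where $u_{\lambda,\epsilon}\to 0$, and $w\mapsto w^+$ is itself only Lipschitz. Your plan of extracting a Palais--Smale sequence from Ekeland therefore presupposes a regularity of the functional that has not been established. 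The paper deliberately avoids this issue by never shifting the functional: it works with $E_{\lambda,\epsilon}$ restricted to the convex constraint set $\mathscr{F}=\{u\in X:u\geq u_{\lambda,\epsilon}\}$, and the version of Ekeland it invokes only produces a one-sided variational inequality $-\frac{C}{n}(1+N_\epsilon(u))\leq B_\epsilon(v_n,u-v_n)-\lambda\int_\Omega\frac{u-v_n}{v_n^{\delta(x)}}dx-\int_\Omega h(x,v_n)(u-v_n)dx$ for $u\in\mathscr{F}$, which requires no Fr\'echet differentiability. Comparison with $u_{\lambda,\epsilon}$ then lets it pass to the limit on the constraint set.

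Second, you assert that local minimality ``furnishes $\alpha,\rho>0$ with $J_{\lambda,\epsilon}(w)\geq\alpha$ on $\|w\|=\rho$.'' Local minimality only gives the weak inequality $J_{\lambda,\epsilon}\geq 0$ on small spheres; the strict gap needed for the mountain-pass geometry is not automatic and could fail on degenerate directions. The paper addresses exactly this by running a dichotomy: if the infimum of $E_{\lambda,\epsilon}$ over $\{u\in\mathscr{F}:\|u-u_{\lambda,\epsilon}\|=r\}$ equals $E_{\lambda,\epsilon}(u_{\lambda,\epsilon})$ for every small $r$, then Ekeland applied on an annulus produces a second solution directly at distance $r$; only if some sphere gives a strict gap does the mountain-pass machinery and the level estimate via $M_n$ enter. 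Without this dichotomy your argument has no fallback when the geometry is flat, so this gap is substantive, not cosmetic.
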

Before proceeding, we summarize the key ideas of the proofs of our theorems. Concerning the proof of Theorem \ref{T1}, we analyze the behavior of the corresponding energy functional $E_{\lambda,\epsilon}$ in a small neighborhood around the origin and establish an existence of local minimizer, which leads us to the existence of $\Lambda_\epsilon.$ Finally, the existence result is proved by sub and supersolution technique, as in \cite{ADICCM, BiagiCV, Haitao}. To find the second solution, we basically use Ekeland's variational principle. A perturbation of the first solution (the local minimizer) with the Moser functions plays an important role to estimate the energy level and for this purpose, small $\epsilon>0$ is chosen to neglect the effect of nonlocal term.

The paper is organized in the following way. Section \ref{pre} is devoted to the functional settings of the problem. In section \ref{Aux}, some useful auxiliary results and their proofs are presented. Section \ref{PT1} contains proof of Theorem \ref{T1}. Section \ref{preT1} concerns some auxiliary results relevant to Theorem \ref{T2} and section \ref{PT2} contains proof of Theorem \ref{T2}. \\
\textbf{Notation:} Throughout this paper, unless otherwise mentioned, we will make use of the following notations and assumptions:
\begin{itemize}
    \item $\Omega\subset\R^2$ is a bounded domain with Lipschitz boundary.
    \item $L^2(\Omega):=\{u:\Omega\to \R \text{ measurable function }: \int_\Omega |u|^2\;dx<\infty\}$
    \item $C,\; C_1,\; C_2$ denote a positive constants, whose values may change from line to line or even in the same line.
    \item For a measurable set $B\subset\R^2$, $|B|$ denotes the Lebesgue measure of the set $B$. Moreover, for a function $u:B\to\R$, we define $u^+:=\max\{u,0\}$ and $u^-:=\min\{u,0\}.$
    \item Suppose $f,\;g:B\to \R$ are two functions. By $f\leq g$ in $B$, we mean that $f\leq g$ a.e. in $B.$ 
    \item By $a_n=O(b_n)$, we mean that $a_n\leq Cb_n$ for some constant $C>0.$
    \item By $a_n=o(1)$, we mean $a_n\to 0$ as $n\to\infty.$
\end{itemize}
\section{Preliminaries}\label{pre}
The Sobolev space $H^1(\Omega)$, is defined by
$$H^1(\Omega):=\{u\in L^2(\Omega): \nabla u\in L^2(\Omega,\R^2)\},$$
where $\nabla u$ denotes the weak gradient of $u.$
This space forms a Hilbert space equipped with norm $$\|u\|_{H^1(\Omega)}=\left(\int_\Omega \left(|u|^2+|\nabla u|^2\right)\;dx\right)^\frac{1}{2}.$$
The fractional Sobolev space $H^s(\Omega)$ for $0<s<1$, is defined by
$$H^s(\Omega):= \{u\in L^2(\Omega): \int_\Omega\int_\Omega\frac{|u(x)-u(y)|^2}{|x-y|^{2+2s}}\;dx\;dy<\infty\}$$
under the norm $$\|u\|_{H^s(\Omega)}=\left(\int_\Omega |u|^2\;dx+\int_\Omega\int_\Omega\frac{|u(x)-u(y)|^2}{|x-y|^{2+2s}}\;dx\;dy\right)^\frac{1}{2}.$$

For more details about the spaces, we refer to \cite{LC, Hitchhikersguide} and the references therein. The lemma, stated below, can be found in \cite[Proposition $2.2$]{Hitchhikersguide}.
\begin{Lemma}\label{l1}
Let $0<s<1$. Then there exists a positive constant $C=C(s,\Omega)$ such that 
$$\|u\|_{H^s(\Omega)}\leq C\|u\|_{H^1(\Omega)}$$
for every $u\in H^1(\Omega)$.
\end{Lemma}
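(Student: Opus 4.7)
The plan is to bound the Gagliardo double integral
$$[u]_{H^s(\Omega)}^2 := \iint_{\Omega\times\Omega} \frac{|u(x)-u(y)|^2}{|x-y|^{2+2s}}\,dx\,dy$$
by $\|u\|_{H^1(\Omega)}^2$; the $L^2$ parts of the two norms coincide, so the claim $\|u\|_{H^s(\Omega)} \le C\|u\|_{H^1(\Omega)}$ reduces to this seminorm estimate. I would split the region of integration as $\Omega\times\Omega = A\cup B$, where $A = \{(x,y)\in\Omega\times\Omega : |x-y|<1\}$ and $B$ is its complement. On $B$ the kernel is integrable away from the diagonal: applying $|u(x)-u(y)|^2 \le 2|u(x)|^2 + 2|u(y)|^2$, Fubini, and the elementary identity $\int_{\{|z|\ge 1\}} |z|^{-2-2s}\,dz = \pi/s$ (polar coordinates in $\R^2$), one immediately obtains $\iint_B \le C_s \|u\|_{L^2(\Omega)}^2$.

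The near part on $A$ requires the gradient. Since $\Omega$ need not be convex, the segment joining $x$ to $y$ may leave $\Omega$, so the fundamental theorem of calculus cannot be applied directly inside $\Omega$. I would first extend $u$ to some $\tilde u \in H^1(\R^2)$ via the classical Lipschitz-domain extension operator, yielding $\|\tilde u\|_{H^1(\R^2)} \le C(\Omega)\|u\|_{H^1(\Omega)}$, and dominate $\iint_A$ by the corresponding integral for $\tilde u$ over $\{(x,y)\in\Omega\times\R^2 : |x-y|<1\}$. For smooth $\tilde u$ (the general case follows by density), the representation $\tilde u(x)-\tilde u(y) = \int_0^1 \nabla\tilde u(y+t(x-y))\cdot(x-y)\,dt$ combined with Cauchy--Schwarz gives
$$|\tilde u(x)-\tilde u(y)|^2 \le |x-y|^2 \int_0^1 |\nabla\tilde u(y+t(x-y))|^2\,dt.$$
Plugging this in, applying Fubini, substituting $z = x-y$, and then the translation $w = y+tz$ (which decouples the $y$ and $t$ dependence because $\nabla\tilde u$ is integrated over all of $\R^2$), the gradient factor collapses to $\|\nabla\tilde u\|_{L^2(\R^2)}^2$. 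The remaining $z$-integral is $\int_{B(0,1)} |z|^{-2s}\,dz = \pi/(1-s) < \infty$, and so $\iint_A \le C_s \|\nabla\tilde u\|_{L^2(\R^2)}^2 \le C(s,\Omega)\|u\|_{H^1(\Omega)}^2$. Combining with the far-part estimate concludes the proof.

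The only genuinely delicate point is the near-part estimate on a possibly non-convex $\Omega$: the Lipschitz regularity of $\partial\Omega$ enters exactly through the extension operator, and that is the source of the domain-dependent constant in the final inequality. Everything else is Fubini plus two elementary polar-coordinate integrations in $\R^2$, which are finite precisely because $0<s<1$.
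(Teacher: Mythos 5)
Your argument is correct and follows the standard proof of this embedding. The paper itself gives no proof, deferring to \cite[Proposition 2.2]{Hitchhikersguide}, and the argument there is exactly the one you reproduce: split the Gagliardo seminorm at $|x-y|=1$, bound the far-diagonal part by the $L^2$-norm via a polar-coordinate computation, and for the near-diagonal part extend $u$ to $H^1(\R^2)$ (using the Lipschitz extension operator) and apply the fundamental theorem of calculus together with Cauchy--Schwarz, Fubini, and a translation.
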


 Involvement of local and nonlocal operators in the equation leads us to consider the following space
$$X:=\{ u\in H^1(\R^2): u=0 \text{ in }\R^2\setminus\Omega\}$$
under the norm
$$N_\epsilon(u)=\left(\int_\Omega |\nabla u|^2\;dx+\epsilon\int_{\R^2}\int_{\R^2}\frac{|u(x)-u(y)|^2}{|x-y|^{2+2s}}\;dx\;dy\right)^\frac{1}{2}.$$
This space is a Hilbert space equipped with the following inner product 
$$B_\epsilon(u,v)=\int_\Omega \nabla u\cdot \nabla v\;dx+\epsilon\int_{\R^2}\int_{\R^2}\frac{(u(x)-u(y))(v(x)-v(y))}{|x-y|^{2+2s}}\;dx\;dy.$$
The following lemma can be found in \cite[Lemma $2.1$]{Silva}.
\begin{Lemma}\label{locnon1}
Let $0<s<1$. There exists a constant $C=C(s,\Omega)>0$ such that
\begin{equation}\label{locnonsem}
\int_{\mathbb{R}^2}\int_{\mathbb{R}^2}\frac{|u(x)-u(y)|^2}{|x-y|^{2+2s}}\,dx\,dy\leq C\int_{\Omega}|\nabla u|^2\,dx
\end{equation}
for every $u\in X$.
\end{Lemma}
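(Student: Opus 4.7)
The plan is to split the integral over $\mathbb{R}^2\times\mathbb{R}^2$ according to whether the variables lie in $\Omega$ or in its complement, and then exploit that $u\equiv 0$ outside $\Omega$. Writing
$$\iint_{\mathbb{R}^2\times\mathbb{R}^2}\frac{|u(x)-u(y)|^2}{|x-y|^{2+2s}}\,dx\,dy \;=\; \iint_{\Omega\times\Omega} \,+\, 2\iint_{\Omega\times\Omega^c} \,+\, \iint_{\Omega^c\times\Omega^c},$$
the last term vanishes identically, the middle term reduces to $\int_\Omega |u(x)|^2\bigl(\int_{\Omega^c}|x-y|^{-(2+2s)}\,dy\bigr)dx$, and the first term is exactly the $H^s(\Omega)$ Gagliardo seminorm of $u$. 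So only the diagonal and the mixed pieces need to be estimated.

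For the diagonal piece, Lemma \ref{l1} gives $\iint_{\Omega\times\Omega}\leq C\,\|u\|_{H^1(\Omega)}^2$, and the Poincar\'e inequality (available because $u$ has zero trace on $\partial\Omega$) absorbs the $L^2$ part of the $H^1$ norm into $\|\nabla u\|_{L^2(\Omega)}^2$. For the mixed piece, I would observe that $B(x,d(x))\subset\Omega$ for every $x\in\Omega$, where $d(x):=\operatorname{dist}(x,\partial\Omega)$; hence $\Omega^c\subset\{|y-x|\geq d(x)\}$, and a polar-coordinate computation in $\mathbb{R}^2$ gives
$$\int_{\Omega^c}\frac{dy}{|x-y|^{2+2s}}\;\leq\;\int_{|z|\geq d(x)}\frac{dz}{|z|^{2+2s}}\;=\;\frac{\pi}{s}\cdot\frac{1}{d(x)^{2s}}.$$
This reduces the mixed piece to the weighted integral $C(s)\int_\Omega |u(x)|^2/d(x)^{2s}\,dx$.

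The main obstacle, and the step where the Lipschitz assumption on $\partial\Omega$ becomes essential, is bounding this weighted integral by $\int_\Omega |\nabla u|^2\,dx$. I would handle it by interpolating between Poincar\'e and the classical Hardy inequality on bounded Lipschitz domains, namely $\int_\Omega u^2/d(x)^2\,dx\leq C_H\int_\Omega |\nabla u|^2\,dx$ for $u\in H^1_0(\Omega)$. Writing $|u|^2/d(x)^{2s}=|u|^{2(1-s)}\cdot(|u|/d(x))^{2s}$ and applying H\"older with conjugate exponents $1/(1-s)$ and $1/s$ gives
$$\int_\Omega \frac{|u|^2}{d(x)^{2s}}\,dx\;\leq\;\Bigl(\int_\Omega u^2\,dx\Bigr)^{1-s}\Bigl(\int_\Omega \frac{u^2}{d(x)^2}\,dx\Bigr)^{s}\;\leq\;C(s,\Omega)\int_\Omega |\nabla u|^2\,dx,$$
which, combined with the diagonal estimate, yields the stated inequality with $C=C(s,\Omega)$.
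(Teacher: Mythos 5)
The paper does not prove this lemma but imports it by citation from \cite[Lemma 2.1]{Silva}, so there is no in-text argument to compare against; your proof must therefore stand on its own, and it does. The decomposition over $\Omega\times\Omega$, $\Omega\times\Omega^c$, and $\Omega^c\times\Omega^c$ is the right one, the polar-coordinate estimate $\int_{\Omega^c}|x-y|^{-(2+2s)}\,dy\leq \pi s^{-1}d(x)^{-2s}$ is correct, and the H\"older interpolation $\int_\Omega |u|^2 d^{-2s}\leq\bigl(\int_\Omega u^2\bigr)^{1-s}\bigl(\int_\Omega u^2 d^{-2}\bigr)^{s}$ combined with Poincar\'e and the boundary-distance Hardy inequality (valid on bounded Lipschitz domains) closes the argument. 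One point you should make explicit: both Poincar\'e and Hardy require $u|_\Omega\in H^1_0(\Omega)$, and for a Lipschitz domain the identification $\{u\in H^1(\mathbb R^2):u=0 \text{ a.e. in }\mathbb R^2\setminus\Omega\}\cong H^1_0(\Omega)$ is a genuine (if standard) fact, not a tautology.

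It is worth noting that a shorter route is available and avoids both the decomposition and Hardy's inequality: since $u\in X\subset H^1(\mathbb R^2)$, the Fourier characterization of the Gagliardo seminorm gives $[u]_{H^s(\mathbb R^2)}^2 = c(s)\int_{\mathbb R^2}|\xi|^{2s}|\hat u(\xi)|^2\,d\xi$, and the elementary bound $|\xi|^{2s}\leq 1+|\xi|^2$ for $0\leq s\leq 1$ yields $[u]_{H^s(\mathbb R^2)}^2\leq c(s)\|u\|_{H^1(\mathbb R^2)}^2 = c(s)\bigl(\|u\|_{L^2(\Omega)}^2+\|\nabla u\|_{L^2(\Omega)}^2\bigr)$, after which Poincar\'e finishes. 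This is in effect Lemma~\ref{l1} applied to $\mathbb R^2$ rather than $\Omega$, and it exploits that the nonlocal seminorm in the claim is over all of $\mathbb R^2\times\mathbb R^2$ rather than $\Omega\times\Omega$. Your argument is more elementary in that it avoids Fourier analysis, but pays for it by invoking the Lipschitz-domain Hardy inequality, which is itself not a trivial fact; either approach is acceptable.
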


Due to Lemma \ref{locnon1}, we conclude that the norm $N_\epsilon$ is equivalent to the norm $\|u\|=\left(\int_\Omega|\nabla u|^2\;dx\right )^\frac{1}{2}.$ The space $(X,\|\cdot\|)$ is embedded into an Orlicz space, see \cite{Tru67}. Moreover, the map $X\to L^1(\Omega)$, defined by $u\to e^{|u|^\nu}$ is continuous for every $\nu\in [1,2]$ and compact except for the case $\nu=2$, see \cite{Moser}.
The following embedding is useful (see \cite{LC}).
\begin{Lemma}\label{Emb}
    The space $X$ is compactly embedded into $L^p(\Omega)$ for all $1\leq p<\infty.$
\end{Lemma}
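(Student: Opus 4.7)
The plan is to reduce the statement to the classical Rellich--Kondrachov compactness theorem. First, I would observe that any $u\in X$ satisfies $u\in H^1(\R^2)$ together with $u\equiv 0$ on $\R^2\setminus\Omega$. Since $\Omega$ has Lipschitz boundary, standard trace/extension theory (or a density argument) identifies the restriction of $u$ to $\Omega$ as an element of $H^1_0(\Omega)$. Thus $X$ can be viewed as a closed subspace of $H^1_0(\Omega)$.

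Next I would invoke the Poincar\'e inequality on the bounded set $\Omega$ to see that the seminorm $\|u\|=\bigl(\int_\Omega|\nabla u|^2\,dx\bigr)^{1/2}$ is an equivalent norm on $X$, and that the inclusion $X\hookrightarrow H^1(\Omega)$ is continuous. The compact embedding $X\hookrightarrow\hookrightarrow L^p(\Omega)$ then reduces to showing the compactness of $H^1(\Omega)\hookrightarrow L^p(\Omega)$ for all $1\le p<\infty$ on the two-dimensional Lipschitz domain $\Omega$.

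For this, I would apply the Rellich--Kondrachov theorem: in dimension $n=2$, since the Sobolev conjugate exponent is formally $+\infty$, the embedding $H^1(\Omega)\hookrightarrow L^p(\Omega)$ is continuous for every $p\in[1,\infty)$ and compact for every $p\in[1,\infty)$ as well. Concretely, given a bounded sequence $\{u_n\}\subset X$, one extracts a subsequence converging weakly in $H^1(\R^2)$ and strongly in $L^2(\Omega)$ via Rellich--Kondrachov; combining the $L^2$ strong convergence with the $H^1$ boundedness and interpolation (or using the compact embedding into every $L^p$ for $p<\infty$ directly) yields strong convergence in $L^p(\Omega)$ for every finite $p$.

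There is no genuine obstacle here: the whole statement is a routine consequence of Poincar\'e's inequality plus Rellich--Kondrachov once the identification $X\subset H^1_0(\Omega)$ is made. The only mild subtlety worth flagging is the two-dimensional feature that the Sobolev embedding does \emph{not} extend to $L^\infty(\Omega)$, which is precisely why the Trudinger--Moser inequality (invoked after Lemma~\ref{locnon1}) is needed for the critical exponential nonlinearity; the Lebesgue-scale compactness stated in Lemma~\ref{Emb} stops strictly below $p=\infty$.
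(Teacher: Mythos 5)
Your proof is correct and is essentially the standard argument the cited reference \cite{LC} would provide; the paper itself does not prove this lemma but only cites it. Identifying $X$ (functions in $H^1(\R^2)$ vanishing outside the bounded Lipschitz domain $\Omega$) with a subspace of $H^1_0(\Omega)$, invoking Poincar\'e to see the gradient seminorm is an equivalent norm, and then applying Rellich--Kondrachov in the borderline case $p=n=2$ to get compactness into every $L^p(\Omega)$ with $p<\infty$ is exactly the canonical route, and your remark on the failure at $p=\infty$ and the role of Trudinger--Moser is apt.
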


Next lemma can be found in \cite{PL}.

\begin{Lemma}\label{PL}
    Suppose $\{u_n\in X: \|u_n\|=1\}$ be a sequence such that $u_n\rightharpoonup u$ weakly in $X$. Then, for every $0<p<(1-\|u\|^2)^{-1}$, we have $$\sup_{n\in\N}\int_\Omega e^{4\pi pu_n^2}\;dx<\infty.$$  
\end{Lemma}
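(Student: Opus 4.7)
The statement to prove is the classical P.-L.\ Lions concentration–compactness refinement of the Moser–Trudinger inequality in two dimensions: if $u_n\rightharpoonup u$ weakly in $X$ with $\|u_n\|=1$, then $e^{4\pi p u_n^2}$ is uniformly $L^1$-bounded for every $p<(1-\|u\|^2)^{-1}$. The plan is to split $u_n$ into its weak limit plus a remainder with small norm, and apply the standard Moser–Trudinger inequality to the remainder after a Hölder splitting.

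First, I would set $v_n:=u_n-u$, so that $v_n\rightharpoonup 0$ in $X$ and, by the weak convergence and the identity
\[
\|v_n\|^2 \;=\; \|u_n\|^2 - 2\int_\Omega \nabla u_n\cdot\nabla u\,dx + \|u\|^2,
\]
we have $\|v_n\|^2\to 1-\|u\|^2$. Note that the hypothesis $p<(1-\|u\|^2)^{-1}$ forces $\|u\|<1$ (so $v_n$ is nontrivial for large $n$) and $p(1-\|u\|^2)<1$. Hence, fixing any number $\theta\in(p(1-\|u\|^2),1)$, I can choose $\eta>0$ small and $r>1$ close to $1$ so that
\[
pr(1+\eta)(1-\|u\|^2)\;<\;\theta\;<\;1 .
\]

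Next I would use the elementary inequality $(a+b)^2\le(1+\eta)b^2+(1+\tfrac{1}{\eta})a^2$, which gives pointwise
\[
4\pi p\,u_n^2 \;\le\; 4\pi p(1+\eta)\,v_n^2 \;+\; 4\pi p\bigl(1+\tfrac{1}{\eta}\bigr)\,u^2 .
\]
Applying Hölder's inequality with conjugate exponents $r,r'$,
\[
\int_\Omega e^{4\pi p u_n^2}\,dx \;\le\; \Bigl(\int_\Omega e^{4\pi p r(1+\eta) v_n^2}\,dx\Bigr)^{\!1/r}\!\Bigl(\int_\Omega e^{4\pi p r'(1+1/\eta) u^2}\,dx\Bigr)^{\!1/r'} .
\]
The second factor is a fixed finite number: since $u\in X$, the standard Moser–Trudinger inequality (applied to $u/\|u\|$ with the exponent absorbed into the bound on $\|u\|^2$) yields $\int_\Omega e^{\beta u^2}\,dx<\infty$ for every $\beta>0$ (in fact one only needs integrability for this particular constant).

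For the first factor I would normalise: with $\tilde v_n:=v_n/\|v_n\|$, one has $\|\tilde v_n\|=1$ and
\[
4\pi p r(1+\eta)\,v_n^2 \;=\; 4\pi\,\bigl[pr(1+\eta)\|v_n\|^2\bigr]\,\tilde v_n^{\,2} .
\]
Since $\|v_n\|^2\to 1-\|u\|^2$, the bracket is at most $\theta<1$ for all large $n$, so by the sharp Moser–Trudinger inequality on $X$ (which holds because $X$ coincides with $H_0^1(\Omega)$ under the gradient norm),
\[
\int_\Omega e^{4\pi p r(1+\eta) v_n^2}\,dx \;\le\; \int_\Omega e^{4\pi\,\tilde v_n^{\,2}}\,dx \;\le\; C|\Omega| .
\]
For the finitely many initial $n$, each integral is finite because $v_n\in X$, so the supremum over all $n$ is finite.

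The main technical point—and the only delicate step—is the joint choice of $\eta$ and $r$: one must fit both $pr(1+\eta)(1-\|u\|^2)<1$ and have $r'$ finite so the second Hölder factor is well-defined. The strict inequality $p(1-\|u\|^2)<1$ leaves a genuine margin, so such $\eta,r$ always exist, and the proof goes through. The passage from a bound for large $n$ to a bound uniform in $n$ is routine.
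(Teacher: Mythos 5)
Your proof is correct and follows the standard route to Lions' concentration--compactness refinement of the Moser--Trudinger inequality. The paper itself does not prove this lemma: it simply cites the original reference (P.-L. Lions), so there is no in-paper proof to compare against, and your argument would serve as a self-contained replacement. The decomposition $u_n=u+v_n$ with $\|v_n\|^2\to 1-\|u\|^2$, the Young-type pointwise splitting $u_n^2\le(1+\eta)v_n^2+(1+1/\eta)u^2$, the H\"older separation, and the normalization of $v_n$ to bring the sharp Moser--Trudinger bound to bear on the leading factor is exactly Lions' original argument. Two small remarks, neither of which affects validity: the parenthetical claim that the hypothesis ``forces $\|u\|<1$'' is slightly misplaced --- $\|u\|\le 1$ already follows from weak lower semicontinuity of the norm, and if $\|u\|=1$ then $v_n\to 0$ strongly and the bracket tends to $0$, so the argument degenerates to the ordinary subcritical Moser--Trudinger bound rather than breaking down; and the justification that $\int_\Omega e^{\beta u^2}\,dx<\infty$ for fixed $u\in X$ and arbitrary $\beta>0$ by ``applying Moser--Trudinger to $u/\|u\|$'' is imprecise as phrased (that only yields the exponent $4\pi/\|u\|^2$), but the fact itself is standard and is exactly the content of the paper's Theorem 2.5, which you may cite directly.
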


The following Theorem was established by Moser \cite{Moser}. 
\begin{Theorem}\label{Moser}
   For every $p>0$, the following map $u\to e^{pu^2}$ is continuous from $X$ to $L^1(\Omega)$. Moreover, $$\sup\{\beta>0: \sup_{\|u\|\leq 1}\int_\Omega e^{\beta u^2}\;dx<\infty\}=4\pi.$$
\end{Theorem}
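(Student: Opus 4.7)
The theorem splits into two independent statements: the sharp Moser--Trudinger bound characterizing $4\pi$ as the critical exponent, and the continuity of the Nemytskii map $u\mapsto e^{pu^{2}}$ from $X$ to $L^{1}(\Omega)$. I would establish them in that order, since the second relies on the quantitative integrability produced by the first.

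For the sharp bound, my plan is Moser's symmetrization argument. Given $u\in X$ with $\|u\|\le 1$, pass to the Schwarz symmetrization $u^{*}$ defined on the disk $B_{R}\subset\R^{2}$ with $|B_{R}|=|\Omega|$. Equimeasurability gives $\int_{\Omega}e^{\beta u^{2}}\,dx=\int_{B_{R}}e^{\beta(u^{*})^{2}}\,dx$, while the Pólya--Szegő inequality preserves $\|\nabla u^{*}\|_{L^{2}(B_{R})}\le\|u\|$, reducing the problem to radial non-increasing competitors on $B_{R}$. Under the change of variable $r=Re^{-t/2}$ and the substitution $w(t)=\sqrt{4\pi}\,u^{*}(r(t))$, a direct computation turns the Dirichlet energy into $\int_{0}^{\infty}|w'(t)|^{2}\,dt\le 1$, the condition $u^{*}(R)=0$ into $w(0)=0$, and the target integral (for $\beta=4\pi$) into $\pi R^{2}\int_{0}^{\infty}e^{w(t)^{2}-t}\,dt$. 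Cauchy--Schwarz gives the pointwise bound $w(t)^{2}\le t$, hence $e^{w^{2}-t}\le 1$, but the finiteness of the one-dimensional integral requires Moser's refined analysis that tracks how close to equality $w$ can be on a fixed sub-interval; this estimate is the delicate core of the argument.

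For the sharpness at $\beta=4\pi$, I would exhibit the classical Moser sequence. After translating a unit disk into $\Omega$, set
\begin{equation*}
m_{n}(x)=\frac{1}{\sqrt{2\pi}}\Bigl[\sqrt{\log n}\,\mathbf{1}_{\{|x|\le 1/n\}}+\frac{\log(1/|x|)}{\sqrt{\log n}}\,\mathbf{1}_{\{1/n\le|x|\le 1\}}\Bigr];
\end{equation*}
an explicit radial calculation yields $\|m_{n}\|=1$ for every $n$, while $\int_{\Omega}e^{\beta m_{n}^{2}}\,dx\ge \pi\,n^{\beta/(2\pi)-2}$, which diverges whenever $\beta>4\pi$.

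For the continuity, fix $p>0$ and let $u_{n}\to u$ in $X$. Up to a subsequence, $u_{n}\to u$ pointwise a.e., so $e^{p u_{n}^{2}}\to e^{p u^{2}}$ a.e., and the conclusion will follow from Vitali's theorem once $\{e^{pu_{n}^{2}}\}$ is uniformly integrable. The elementary inequality $u_{n}^{2}\le(1+\eta)u^{2}+C_{\eta}(u_{n}-u)^{2}$ combined with Hölder's inequality at conjugate exponents $p',q'>1$ close to $1$ and $\infty$ gives, for a small $\varepsilon>0$,
\begin{equation*}
\int_{\Omega}e^{(1+\varepsilon)pu_{n}^{2}}\,dx\le\Bigl(\int_{\Omega}e^{p'(1+\varepsilon)(1+\eta)pu^{2}}\,dx\Bigr)^{\!1/p'}\Bigl(\int_{\Omega}e^{q'(1+\varepsilon)pC_{\eta}(u_{n}-u)^{2}}\,dx\Bigr)^{\!1/q'}.
\end{equation*}
The first factor is finite (apply the sharp bound to $u/\|u\|$ after scaling, noting that $e^{Au^{2}}\in L^{1}(\Omega)$ for each fixed $u$ and each $A>0$), and the second is uniformly bounded in $n$, because $\|u_{n}-u\|\to 0$ allows one to arrange $q'(1+\varepsilon)pC_{\eta}\|u_{n}-u\|^{2}<4\pi$ for all large $n$, whereupon the first half of the theorem applies to $(u_{n}-u)/\|u_{n}-u\|$. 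The main obstacle is the sharp one-dimensional estimate: the trivial $w(t)^{2}\le t$ yields only $e^{w^{2}-t}\le 1$, not integrable on $[0,\infty)$, and extracting the finite bound demands Moser's full rearrangement analysis; the continuity part, by contrast, is a quantitative consequence of the sharp inequality together with standard perturbation arguments.
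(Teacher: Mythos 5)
The paper gives no proof of this result, only a citation to Moser, and your sketch correctly reproduces Moser's classical argument: Schwarz symmetrization together with P\'olya--Szeg\H{o} to reduce to radial competitors, the logarithmic change of variables leading to the one-dimensional estimate, the Moser sequence for sharpness beyond $4\pi$, and a Young--H\"older--Vitali argument for the continuity of $u\mapsto e^{pu^2}$. You rightly identify the one-dimensional lemma $\int_0^\infty e^{w(t)^2-t}\,dt\le C$ (for $w(0)=0$, $\int_0^\infty|w'|^2\le1$) as the genuine content and leave it as a black box, which is exactly what the citation covers.
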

Now, we are ready to define the notion of a weak solution to the equation (\ref{ME}).
\begin{Definition}\label{def}
    A function $u\in X$ is said to be a super-solution (or sub-solution) of the equation (\ref{ME}), if it satisfies the following properties:
    \begin{enumerate}
        \item [(a)] for every $\omega\Subset\Omega$, there exists $C(\omega)>0$ such that $u\geq C(\omega)$ in $\omega$, and
        \item[(b)] for every non-negative $\phi\in C^1_c(\Omega)$,
        \begin{align}\label{Weak1}
        B_\epsilon(u,\phi)\geq (\text{ or } \leq) \lambda\int_\Omega\frac{\phi}{u^{\delta(x)}}\;dx+\int_\Omega h(x,u)\phi\;dx.
        \end{align}
    \end{enumerate}
We say $u\in X$ is a solution of the equation (\ref{ME}), if it satisfies (a) and the inequality (\ref{Weak1}) holds for every $\phi\in C^1_c(\Omega)$ without any sign condition.
\end{Definition}

\begin{Remark}
We remark that our definition is well stated. Since $u,\;\phi\in X$, by Cauchy-Schwarz inequality, we have $|B_\epsilon(u,\phi)|\leq N_\epsilon(u)N_\epsilon(\phi)<\infty.$ From the above condition (a), one has
$|\int_\Omega\frac{\phi}{u^{\delta(x)}}\;dx|<\infty.$ Moreover, the condition (H2) guarantees that for $\al>\al_0$, there exists $C>0$ such that $h(x,u)\leq Ce^{\alpha u^2}$. Using H\"{o}lder inequality and Theorem \ref{Moser}, we get
\begin{align}\label{B1}
    \int_\Omega h(x,u)\phi\;dx\leq C\int_\Omega e^{\al u^2}\phi\;dx\leq \|\phi\|_{L^2(\Omega)}\left( \int_\Omega e^{2\alpha u^2}\;dx\right)^\frac{1}{2}<\infty.
\end{align}  
\end{Remark}
\begin{Lemma}\label{WSL}
    Suppose $u\in X$ is a solution of the equation (\ref{ME}). Then, for every $v\in X$, we have
   \begin{align}\label{weak2}
       B_\epsilon(u,v)=\lambda\int_\Omega\frac{v}{u^{\delta(x)}}\;dx+\int_\Omega h(x,u)v\;dx.
   \end{align}
\end{Lemma}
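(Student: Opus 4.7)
The aim is to extend the weak identity (Definition \ref{def}) from test functions $\phi \in C^1_c(\Omega)$, where it holds as an equality since $u$ is a solution, to all $v \in X$ by a density and limiting argument. Because the identity (\ref{weak2}) is linear in the test function and $X \simeq H^1_0(\Omega)$ is a vector lattice (so the positive part $v^+$ and the non-positive part $v^-$ both belong to $X$ with $v = v^+ + v^-$), it suffices to establish (\ref{weak2}) for $v \geq 0$; the general case then follows by applying the identity to $v^+$ and to $-v^-$ and subtracting.

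Fix $v \in X$ with $v \geq 0$. By density of $C^1_c(\Omega)$ in $H^1_0(\Omega)$, I select a non-negative sequence $\{\phi_n\} \subset C^1_c(\Omega)$ with $\phi_n \to v$ in $X$ and, along a subsequence, $\phi_n \to v$ a.e.\ in $\Omega$, for instance by truncating $v_k = (v - 1/k)^+ \in H^1_0(\Omega)$ and then mollifying. Plugging $\phi_n$ into (\ref{Weak1}) with equality yields
\[
B_\epsilon(u, \phi_n) = \lambda \int_\Omega \frac{\phi_n}{u^{\delta(x)}}\, dx + \int_\Omega h(x, u)\phi_n\, dx.
\]
The two non-singular terms pass to the limit cleanly. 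For the bilinear form, the Cauchy--Schwarz inequality gives $|B_\epsilon(u, \phi_n - v)| \leq N_\epsilon(u)\, N_\epsilon(\phi_n - v) \to 0$, hence $B_\epsilon(u, \phi_n) \to B_\epsilon(u, v)$. For the $h$-term, condition (H2) yields $h(x, u) \leq C e^{\alpha u^2}$ for some $\alpha > \alpha_0$, Theorem \ref{Moser} then gives $e^{\alpha u^2} \in L^p(\Omega)$ for every $p \geq 1$, and combining this with the strong convergence $\phi_n \to v$ in $L^q(\Omega)$ from Lemma \ref{Emb} via H\"older's inequality forces $\int_\Omega h(x, u)\phi_n\, dx \to \int_\Omega h(x, u) v\, dx$.

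The main obstacle is the singular integral, because $\frac{1}{u^{\delta(x)}}$ is only locally integrable on $\Omega$, since Definition \ref{def}(a) supplies strictly positive lower bounds for $u$ only on compact subsets, so the finiteness of $\int_\Omega \frac{v}{u^{\delta(x)}}\, dx$ is not automatic for arbitrary $v \in X$. The strategy is to bootstrap integrability from the solution property: since the other two terms of the displayed equality converge to finite limits, the numerical sequence $\int_\Omega \frac{\phi_n}{u^{\delta(x)}}\, dx$ must converge to some finite limit $L$, and Fatou's lemma applied to the a.e.\ convergence of non-negative integrands gives $\int_\Omega \frac{v}{u^{\delta(x)}}\, dx \leq L < \infty$, so integrability is established. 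Finally, by refining the construction so that $0 \leq \phi_n \nearrow v$ a.e.\ (achievable by combining the monotone truncations $v_k = (v - 1/k)^+$ with a diagonal choice of mollification parameters), the monotone convergence theorem identifies $L = \int_\Omega \frac{v}{u^{\delta(x)}}\, dx$. Substituting the three limits into the displayed identity yields (\ref{weak2}) for $v \geq 0$, and then for all $v \in X$ by the initial reduction.
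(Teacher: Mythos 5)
Your overall strategy matches the paper's: reduce to $v \geq 0$, pass to the limit along an approximating sequence, and conclude by linearity. The treatment of $B_\epsilon$ and the $h$-term is correct, and the Fatou step establishing $\frac{v}{u^{\delta(x)}}\in L^1(\Omega)$ is a nice observation (the paper does not need it, for reasons explained below). However, the crucial last step — asserting that one can take $\phi_n \in C^1_c(\Omega)$ with $\phi_n \nearrow v$ a.e.\ by "truncating $v_k=(v-1/k)^+$ and then mollifying with a diagonal choice of parameters" — contains a genuine gap. Two things go wrong. First, for general $v\in H^1_0(\Omega)$ the set $\{v>1/k\}$ need not be compactly contained in $\Omega$, so the truncation may fail to have compact support and cannot be mollified into a $C^1_c(\Omega)$ function. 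Second, even when the truncation is compactly supported, mollification does not preserve pointwise ordering: $(v_k*\rho_\epsilon)(x)$ can exceed $v_{k+1}(x)$ (and even $v(x)$) wherever $v_k$ is positive nearby but $v_{k+1}$ vanishes at $x$, so the resulting sequence is not monotone and the monotone convergence theorem cannot be invoked. Without monotonicity (or a dominating function, which you do not have), Fatou gives only $\int_\Omega \frac{v}{u^{\delta(x)}}\,dx \leq L$, and the limit $L$ is not identified.

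The paper sidesteps both issues with an intermediate step. It first proves (\ref{weak2}) for \emph{all} $v\in X$ with $\mathrm{supp}(v)\Subset\Omega$ (not merely $C^1_c$), approximating such a $v$ in $X$ by $C^1_c(\Omega_1)$ functions where $\Omega_1\Supset\mathrm{supp}(v)$ is still compactly contained; the lower bound $u\geq C(\Omega_1)$ on $\Omega_1$ then makes the singular term converge directly, with no Fatou or monotonicity needed. Only after that does it invoke \cite[Lemma A.1]{Hirano04}, which produces a monotone sequence $v_n$ of \emph{compactly supported $X$-functions} (not $C^1_c$ functions) increasing to $v$; the identity already holds for each $v_n$ by the first step, and the monotone convergence theorem finishes. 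The key lemma you need is precisely this one, and it is nontrivial — it cannot be replaced by truncation plus mollification. To repair your argument you should either prove or cite such a lemma, and you should also establish the identity for compactly supported $X$-functions before applying it, since the lemma's output is not $C^1_c$.
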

\begin{proof}
Assume $v\in X$ such that $\omega:=\mathrm{supp}(v)\Subset\Omega.$ Then, there exists an open set $\Omega_1$ such that $\omega\Subset\Omega_1\Subset\Omega$ and a sequence $\{\phi_n\}\subset C^1_c(\Omega_1)$ such that $\phi_n\to v$ strongly in $X$ and pointwise a.e. in $\Omega.$ Since $u\in X$ is a solution of (\ref{ME}), we have
    \begin{align}\label{B2}
       B_\epsilon(u,\phi_n)=\lambda\int_\Omega\frac{\phi_n}{u^{\delta(x)}}\;dx+\int_\Omega h(x,u)\phi_n\;dx.
   \end{align}
Now, since $u\geq C(\Omega_1)$ in $\Omega_1$, we have
\begin{align*}
    \int_{\Omega_1}\frac{|\phi_n-v|}{u^{\delta(x)}}\;dx\leq C\int_\Omega|\phi_n-v|\;dx\leq C\|\phi_n-v\|\to 0 \text{ as }n\to\infty.
\end{align*}
Thus, $$\lim_{n\to\infty}\int_\Omega\frac{\phi_n}{u^{\delta(x)}}\;dx=\int_\Omega\frac{v}{u^{\delta(x)}}\;dx.$$ Moreover, using the estimate (\ref{B1}) and Lemma \ref{Emb}, we obtain
$$\int_\Omega |h(x,u)(\phi_n-v)|\;dx\leq C\|\phi_n-v\|\to 0\text{ as }n\to\infty,$$
which implies
$$\lim_{n\to\infty}\int_\Omega h(x,u)\phi_n\;dx=\int_\Omega h(x,u)\phi\;dx.$$
We take $n\to\infty$ in (\ref{B2}) to obtain
\begin{align}\label{B3}
       B_\epsilon(u,v)=\lambda\int_\Omega\frac{v}{u^{\delta(x)}}\;dx+\int_\Omega h(x,u)v\;dx.
   \end{align}
   Suppose $0\leq v\in X$, then by \cite[Lemma A.1]{Hirano04}, there exists a sequence $\{v_n\}_{n\in\N}$ such that $\mathrm{supp}(v_n)\Subset\Omega$ , $0\leq v_1\leq v_2\leq....\leq v_n\leq v_{n+1}\leq..\leq v$ and $v_n\to v$ strongly in $X$ and pointwise a.e. in $\Omega.$ Since the identity (\ref{B3}) holds for each $v_n$, using Monotone convergence theorem, we have
\begin{align}\label{B4}
       B_\epsilon(u,v)=\lambda\int_\Omega\frac{v}{u^{\delta(x)}}\;dx+\int_\Omega h(x,u)v\;dx.
   \end{align}
For general $v\in X$, we apply (\ref{B4}) for $v^+,\;v^-$ and then use linearity to obtain the result.  
\end{proof}

\section{Auxiliary Results}\label{Aux}
The following results are very crucial for our arguments. To be more precise, these lemmas play an important role to prove Lemma \ref{CL2} in the next section.
\begin{Lemma}\label{UB}
    Suppose $\lambda_0>0$ be a real number and $u_{\lambda,\epsilon}\in X$ is a solution of the equation (\ref{ME}) for $0<\lambda<\lambda_0$. Then, there exists $r_1>0$, such that if $\|u_{\lambda,\epsilon}\|\leq r_1$, then $\|u_{\lambda,\epsilon}\|_{L^\infty(\Omega)}\leq C,$ where $C=C(\lambda_0,\Omega)>0$ is a constant independent of $\epsilon.$
\end{Lemma}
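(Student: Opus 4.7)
My plan is to establish the $L^\infty$ bound by a De Giorgi--Stampacchia truncation argument at high levels, using the smallness of $\|u_{\lambda,\epsilon}\|$ to tame the critical exponential nonlinearity through Theorem \ref{Moser}. All the constants produced along the way will be tracked to be independent of $\epsilon$.

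For a threshold $k\geq 1$ to be fixed below, I would test the weak formulation from Lemma \ref{WSL} with $v=(u_{\lambda,\epsilon}-k)^+$, which lies in $X$ because $u_{\lambda,\epsilon}\in X$ and $k\geq 0$. The local piece contributes exactly $\int_\Omega |\nabla (u_{\lambda,\epsilon}-k)^+|^2\,dx$, while the nonlocal piece $\epsilon\iint_{\R^2\times\R^2}(u_{\lambda,\epsilon}(x)-u_{\lambda,\epsilon}(y))\bigl((u_{\lambda,\epsilon}-k)^+(x)-(u_{\lambda,\epsilon}-k)^+(y)\bigr)|x-y|^{-2-2s}\,dx\,dy$ is nonnegative by the standard pointwise truncation inequality. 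Dropping this nonlocal term makes the rest of the estimate uniform in $\epsilon$.

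On the super-level set $A_k:=\{u_{\lambda,\epsilon}>k\}$, the bound $u_{\lambda,\epsilon}\geq k\geq 1$ together with $\delta(x)\in(0,1)$ yields $u_{\lambda,\epsilon}^{-\delta(x)}\leq 1$, so the singular term is controlled by $\lambda_0\int_{A_k}(u_{\lambda,\epsilon}-k)\,dx$, which H\"{o}lder's inequality and the Sobolev embedding $X\hookrightarrow L^{p_1}(\Omega)$ from Lemma \ref{Emb} bound by $C\|\nabla(u_{\lambda,\epsilon}-k)^+\|_{L^2(\Omega)}\,|A_k|^{1-1/p_1}$ for any $p_1<\infty$. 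For the critical term, $(H2)$ gives $h(x,u_{\lambda,\epsilon})\leq C e^{\alpha u_{\lambda,\epsilon}^2}$ for some fixed $\alpha>\alpha_0$, and a three-factor H\"{o}lder split produces
\[
\int_{A_k} h(x,u_{\lambda,\epsilon})(u_{\lambda,\epsilon}-k)^+\,dx\leq C\,\|(u_{\lambda,\epsilon}-k)^+\|_{L^{p_1}(\Omega)}\Bigl(\int_\Omega e^{p_2\alpha u_{\lambda,\epsilon}^2}\,dx\Bigr)^{1/p_2}|A_k|^{1/p_3},
\]
with $1/p_1+1/p_2+1/p_3=1$. Choosing $r_1>0$ small enough that $p_2\alpha r_1^2<4\pi$, Theorem \ref{Moser} makes the middle factor uniformly bounded in $\lambda$ and $\epsilon$.

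Combining these two estimates and dividing through by $\|\nabla(u_{\lambda,\epsilon}-k)^+\|_{L^2(\Omega)}$ produces a Stampacchia-type inequality $\|\nabla(u_{\lambda,\epsilon}-k)^+\|_{L^2(\Omega)}\leq C\,|A_k|^\gamma$ for some $\gamma>1/2$. The strict inequality $\gamma>1/2$ is arranged by taking $p_1$ and $p_2$ large so that $1/p_1+1/p_2<1/2$, which is compatible with $p_2\alpha r_1^2<4\pi$ once $r_1$ is taken sufficiently small. A further application of the two-dimensional Sobolev inequality together with Chebyshev then gives $(h-k)^2|A_h|\leq C|A_k|^{2\gamma}$ for all $h>k\geq 1$, and since $2\gamma>1$ the classical Stampacchia level-set lemma forces $|A_{k_0}|=0$ for some finite $k_0=k_0(\lambda_0,\Omega)$, yielding the desired $\|u_{\lambda,\epsilon}\|_{L^\infty(\Omega)}\leq k_0$. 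The main delicate point will be the balance between the smallness required of $r_1$ (to secure the Moser--Trudinger integrability of $e^{p_2\alpha u_{\lambda,\epsilon}^2}$) and the strict inequality $\gamma>1/2$ (to trigger Stampacchia's iteration); these two constraints together fix the value of $r_1$.
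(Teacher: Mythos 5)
Your proposal is correct and follows essentially the same route as the paper's proof: test the weak formulation with $(u_{\lambda,\epsilon}-k)^+$, drop the nonnegative nonlocal contribution, bound the singular term trivially on the super-level set using $u_{\lambda,\epsilon}\geq k\geq 1$ and $\delta\in(0,1)$, bound $h$ via the $(H2)$ exponential growth, apply a three-factor H\"{o}lder split, use Theorem \ref{Moser} together with the smallness of $\|u_{\lambda,\epsilon}\|$ to control the exponential integral uniformly in $\epsilon$, and close via Stampacchia's level-set iteration. The paper chooses the concrete pair $p=5$, $q=5/2$ and iterates at exponent $p$ rather than $2$, but that is purely a bookkeeping difference.
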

\begin{proof}
 For $k>1$, we define $S(k):=\{x\in\Omega: u_{\lambda,\epsilon}\geq k\}$. Incorporating $\phi=(u_{\lambda,\epsilon}-k)^+$ in the weak formulation of equation (\ref{ME}), we obtain
\begin{align}
    \int_\Omega \nabla u_{\lambda,\epsilon}\cdot\nabla \phi\;dx+&\epsilon \underbrace{\int_{\R^2}\int_{\R^2}\frac{(u_{\lambda,\epsilon}(x)-u_{\lambda,\epsilon}(y))(\phi(x)-\phi(y))}{|x-y|^{2+2s}}\;dx\;dy}_{\geq 0}\nonumber\\
    &=\lambda\int_\Omega \frac{\phi}{u_{\lambda,\epsilon}^{\delta(x)}}\;dx+\int_\Omega h(x,u_{\lambda,\epsilon})\phi\;dx,
\end{align}
which yields
\begin{align}\label{S1}
    \int_\Omega |\nabla\phi|^2\leq C\lambda\int_\Omega \phi\;dx+\int_\Omega h(x,u_{\lambda,\epsilon})\phi\;dx.
\end{align}
By the property (H2), for $\alpha>\alpha_0$ there exists $C>0$ such that $$h(x,s)\leq Ce^{\alpha s^2} \text{ for all }s\geq 0.$$ Utilizing this fact and H\"{o}lder inequality, (\ref{S1}) implies
\begin{align}
    \int_\Omega |\nabla\phi|^2\;dx&\leq C\|\phi\|_{L^p(\Omega)}|S(k)|^{1-\frac{1}{p}}+C\int_{\Omega}e^{\alpha u_{\lambda,\epsilon}^2}\phi\;dx\nonumber\\
    &\leq C\|\phi\|_{L^p(\Omega)}|S(k)|^{1-\frac{1}{p}}+C\|\phi\|_{L^p(\Omega)}|S(k)|^{1-\frac{1}{p}-\frac{1}{q}}\left(\int_{\Omega}e^{q\alpha u_{\lambda,\epsilon}^2}\;dx\right)^{\frac{1}{q}}\nonumber\\
    &\leq C\|\phi\|_{L^p(\Omega)}|S(k)|^{1-\frac{1}{p}-\frac{1}{q}}\left(|S(k)|^\frac{1}{q}+\left(\int_{\Omega}e^{q\alpha \|u_{\lambda,\epsilon}\|^2\left(\frac{u_{\lambda,\epsilon}}{\|u_{\lambda,\epsilon}\|}\right)^2}\;dx\right)^{\frac{1}{q}}\right)\nonumber\\
    &\leq C\|\phi\|_{L^p(\Omega)}|S(k)|^{1-\frac{1}{p}-\frac{1}{q}}\left(1+\left(\int_{\Omega}e^{q\alpha \|u_{\lambda,\epsilon}\|^2\left(\frac{u_{\lambda,\epsilon}}{\|u_{\lambda,\epsilon}\|}\right)^2}\;dx\right)^{\frac{1}{q}}\right),
\end{align}
where $p,q>1$ such that $\frac{1}{p}+\frac{1}{q}+\frac{1}{q}=1$ (to be determined later). Whenever $q\alpha\|u_{\lambda,\epsilon}\|^2\leq 4\pi$, i.e., $\|u_{\lambda,\epsilon}\|\leq r_1(:=\frac{4\pi}{q\alpha})$ then $$\int_{\Omega}e^{q\alpha \|u_{\lambda,\epsilon}\|^2\left(\frac{u_{\lambda,\epsilon}}{\|u_{\lambda,\epsilon}\|}\right)^2}\;dx<C, $$
where $C>0$ is independent of $\epsilon.$ Thus, we have 
\begin{align}
    \int_\Omega |\nabla\phi|^2\;dx&\leq C\|\phi\|_{L^p(\Omega)}|S(k)|^{1-\frac{1}{p}-\frac{1}{q}},
\end{align}
where $C>0$ is a constant independent of $\epsilon.$ By Lemma \ref{Emb}, one has
\begin{align}
    \int_\Omega |\phi|^p\;dx\leq C |S(k)|^{p(1-\frac{1}{p}-\frac{1}{q})}.
\end{align}
For $1\leq k\leq h,$ we have
\begin{align}
    (h-k)^p|S(h)|=\int_{S(h)}(h-k)^p\;dx\leq \int_{S(h)}(u_{\lambda,\epsilon}-k)^p\;dx&\leq \int_\Omega |(u_{\lambda,\epsilon}-k)^+|^p\;dx\nonumber\\
    &\leq C|S(k)|^{p(1-\frac{1}{p}-\frac{1}{q})},
\end{align}
which gives
\begin{align}
    |S(h)|\leq \frac{C}{(h-k)^p}|S(k)|^{p(1-\frac{1}{p}-\frac{1}{q})},
\end{align}
where $C>0$ is a constant independent of $\epsilon.$ One can choose $p,q>1$ such that $\frac{1}{p}+\frac{1}{q}+\frac{1}{q}=1$ and $p(1-\frac{1}{p}-\frac{1}{q})>1$ (in particular, take $p=5, q=\frac{5}{2}).$ The conclusion follows from \cite[Lemma B.1]{Stampaccia00}.
\end{proof}
Suppose $w_{\lambda,\epsilon}\in X$ is the solution of purely singular problem:
\begin{align}\label{San}
    \begin{cases}
        \mathcal{M}_\epsilon u=\frac{\lambda}{u^{\delta(x)}} \text{ in }\Omega,\\
    u>0\text{ in }\Omega \text{ and }u=0 \text{ in }\R^2\setminus\Omega.
    \end{cases}
\end{align}

We have the following uniform positivity result.
\begin{Lemma}\label{LB}
Suppose $w_{\lambda,\epsilon}\in X$ is a solution to (\ref{San}) for $\lambda\in(0,\lambda_0)$. Then, there exists $\epsilon_0>0$ satisfying the following property: for every $\epsilon\in(0,\epsilon_0)$ and for any ball $B_{2r}(x_0)\Subset\Omega$, there exists a constant $\tau_0=\tau_0(\Omega,\lambda_0,r)>0$ independent of $\epsilon$ such that $w_{\lambda,\epsilon}\geq \tau_0$ in $B_r(x_0).$ 
\end{Lemma}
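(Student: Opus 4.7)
The strategy will be to construct an explicit smooth sub-barrier supported near $B_r(x_0)$ and extract the lower bound via a direct comparison against $w_{\lambda,\epsilon}$, rather than trying to invoke a general comparison principle.

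First I would fix a cutoff $\eta \in C^\infty_c(\Omega)$ with $0 \leq \eta \leq 1$, $\eta \equiv 1$ on $B_r(x_0)$, and $\supp\eta \subset B_{2r}(x_0)$. Since $\eta$ is smooth and compactly supported in $\R^2$, both $M_1 := \|\Delta\eta\|_{L^\infty(\R^2)}$ and $M_2 := \|(-\Delta)^s\eta\|_{L^\infty(\R^2)}$ are finite and depend only on $\eta$ (hence on $r, x_0, \Omega, s$). Writing $\delta_- := \min_{\overline\Omega}\delta > 0$ and fixing any $\epsilon_0 > 0$, I would set
$$\tau_0 := \min\Bigl\{1,\ \Bigl(\frac{\lambda}{M_1 + 2\epsilon_0 M_2}\Bigr)^{\frac{1}{1+\delta_-}}\Bigr\}.$$
For $\underline w := \tau_0\eta$ and any $x$ with $\eta(x) > 0$ and any $\epsilon \in (0,\epsilon_0)$, a pointwise computation then yields
$$\tau_0\bigl(M_1 + 2\epsilon M_2\bigr) \leq \lambda\, \tau_0^{-\delta_-} \leq \frac{\lambda}{(\tau_0\eta(x))^{\delta(x)}},$$
using $(\tau_0\eta)^{\delta(x)} \leq \tau_0^{\delta_-}$, which follows from $\tau_0\eta \leq 1$ and $\delta(x) \geq \delta_-$.

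Next I would run the comparison. Put $\phi := (\underline w - w_{\lambda,\epsilon})^+ \in X$; by Definition \ref{def}(a), $w_{\lambda,\epsilon} > 0$ on $\Omega$, so $\supp\phi \subset \supp\eta \Subset \Omega$ and $w_{\lambda,\epsilon}$ is bounded below by a positive constant on $\supp\eta$ (possibly depending on $\epsilon$, which is harmless for the argument). Testing the weak form of (\ref{San}) with $\phi$ (by an argument parallel to Lemma \ref{WSL}) and computing $B_\epsilon(\underline w,\phi)$ via integration by parts together with the identity $\iint\frac{(\eta(x)-\eta(y))(\phi(x)-\phi(y))}{|x-y|^{2+2s}}\,dx\,dy = 2\int((-\Delta)^s\eta)\phi\,dx$ for the smooth $\underline w$, one obtains
$$B_\epsilon(w_{\lambda,\epsilon},\phi) = \lambda\!\int_\Omega\!\frac{\phi}{w_{\lambda,\epsilon}^{\delta(x)}}\,dx,\qquad B_\epsilon(\underline w,\phi) \leq \lambda\!\int_\Omega\!\frac{\phi}{\underline w^{\delta(x)}}\,dx,$$
the latter being the integrated form of the pointwise estimate above. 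Subtracting and using $1/\underline w^{\delta(x)} \leq 1/w_{\lambda,\epsilon}^{\delta(x)}$ on $\supp\phi$ (where $\underline w > w_{\lambda,\epsilon}$) gives $B_\epsilon(\underline w - w_{\lambda,\epsilon},\phi) \leq 0$. Combining this with the componentwise inequality $(u(x)-u(y))(u^+(x)-u^+(y)) \geq (u^+(x)-u^+(y))^2$ applied to $u := \underline w - w_{\lambda,\epsilon}$ yields $N_\epsilon(\phi)^2 \leq 0$, hence $\phi \equiv 0$, i.e., $w_{\lambda,\epsilon} \geq \tau_0\eta$; on $B_r(x_0)$ this reads $w_{\lambda,\epsilon} \geq \tau_0$, which is the desired bound.

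The main obstacle will be that $\underline w = \tau_0\eta$ vanishes outside $\supp\eta$, so it fails Definition \ref{def}(a) and is not a sub-solution in the paper's prescribed sense; no abstract comparison theorem applies directly. I plan to circumvent this by performing the comparison by hand through the test function $(\underline w - w_{\lambda,\epsilon})^+$, whose support lies strictly inside $\{\eta > 0\}$ where $w_{\lambda,\epsilon}$ is bounded away from zero by Definition \ref{def}(a); this keeps every singular integrand finite and allows the pointwise sub-solution bound to transfer into the integral inequality required for the comparison.
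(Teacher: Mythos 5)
Your proof is correct, and it takes a genuinely different and more self-contained route than the paper. The paper tests the weak form of (\ref{San}) with $w_{\lambda,\epsilon}$ to obtain an $\epsilon$-uniform energy bound $\|w_{\lambda,\epsilon}\|^2\leq C(1+\|w_{\lambda,\epsilon}\|)$ and then defers the interior positivity entirely to \cite[Proposition 3.7]{BiagiCV}, giving no further detail. You instead construct an explicit smooth barrier $\tau_0\eta$ and extract the bound by a direct energy comparison against the test function $(\tau_0\eta-w_{\lambda,\epsilon})^+$; since that test function is automatically supported in $\{\eta>0\}\Subset\Omega$, where $w_{\lambda,\epsilon}$ is bounded below by Definition \ref{def}(a), the singular terms stay finite and no abstract comparison principle is needed. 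This is more elementary, yields an explicit $\tau_0$, and in fact works for any fixed $\epsilon_0>0$ — the smallness of $\epsilon$ plays no essential role, which is a mild sharpening. You also correctly account for the factor of $2$ that relates the paper's bilinear form $B_\epsilon$ to the pointwise operator (under the paper's unnormalized definition of $(-\Delta)^s$, the form represents $-\Delta+2\epsilon(-\Delta)^s$), a detail that is easy to drop. One small caveat: your $\tau_0$ depends on $\lambda$, not only on $\lambda_0$; this is unavoidable (by scaling, solutions of (\ref{San}) shrink to $0$ as $\lambda\to0$), so the statement's notation $\tau_0(\Omega,\lambda_0,r)$ should be read as recording the relevant parameters rather than asserting $\lambda$-uniformity.
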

\begin{proof}
  We incorporate $w_{\lambda,\epsilon}$ in the weak formulation of (\ref{San}) and obtain
\begin{align}
    \int_\Omega |\nabla w_{\lambda,\epsilon}|^2\;dx\leq \lambda\int_\Omega w_{\lambda,\epsilon}^{1-\delta(x)}\;dx&\leq \lambda\left(\int_{\{w_{\lambda,\epsilon}\leq 1\}} w_{\lambda,\epsilon}^{1-\delta(x)}\;dx+\int_{\{w_{\lambda,\epsilon}\geq 1\}} w_{\lambda,\epsilon}^{1-\delta(x)}\;dx\right)\nonumber\\
    &\leq C\left( 1+\|w_{\lambda,\epsilon}\|\right).
\end{align}    
The rest of the proof similar to the same of \cite[Proposition 3.7]{BiagiCV}. We omit the proof here.
\end{proof}
Next lemma is useful to prove Theorem \ref{T2}. 
\begin{Lemma}\label{Comp}
    Suppose $u,\;v\in X$ are sub-solution and super-solution to equation (\ref{ME}), respectively. We suppose that $u\leq v$ in $\Omega$ and for every $\omega\Subset\Omega$, there exists $C(\omega)>0$ such that $u\geq C(\omega)$ in $\omega$. Then, either $u\equiv v$ in $\Omega$ or $u<v$ in $\Omega$.
\end{Lemma}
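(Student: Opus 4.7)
The plan is to reduce the statement to the strong maximum principle for the mixed operator $\mathcal{M}_\epsilon$ perturbed by a bounded zero-order term. Setting $w := v - u$, which is nonnegative on all of $\R^2$ (since $u,v\in X$ vanish outside $\Omega$), I aim to show that on every compactly contained subdomain $\omega\Subset\Omega$, $w$ satisfies a linear differential inequality of the form $\mathcal{M}_\epsilon w + K_\omega w \geq 0$ in the weak sense, and then propagate the resulting local dichotomy globally using the connectedness of $\Omega$.

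For the local inequality, fix $\omega\Subset\Omega$ and any nonnegative $\phi\in C^1_c(\omega)$. Subtracting the subsolution inequality for $u$ from the supersolution inequality for $v$ yields
\begin{align*}
B_\epsilon(w,\phi)\ge\lambda\int_\omega\phi\Big(\frac{1}{v^{\delta(x)}}-\frac{1}{u^{\delta(x)}}\Big)\,dx+\int_\omega\bigl(h(x,v)-h(x,u)\bigr)\phi\,dx.
\end{align*}
The second integral is nonnegative by the monotonicity in (H1) and can be discarded. For the first, applying the mean value theorem to $t\mapsto t^{-\delta(x)}$ on the interval $[u(x),v(x)]\subset[C(\omega),\infty)$ gives $\tfrac{1}{u^{\delta(x)}}-\tfrac{1}{v^{\delta(x)}}\le \tfrac{\delta(x)}{u^{\delta(x)+1}}\,w\le \tilde K_\omega w$ for a positive constant $\tilde K_\omega$ depending only on $\omega$, $\|\delta\|_\infty$, and $C(\omega)$. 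Setting $K_\omega:=\lambda\tilde K_\omega$ and rearranging, we obtain
\begin{align*}
B_\epsilon(w,\phi)+K_\omega\int_\omega w\phi\,dx\ge 0\qquad\text{for all }0\le\phi\in C^1_c(\omega),
\end{align*}
so $w$ is a nonnegative weak supersolution of $\mathcal{M}_\epsilon z+K_\omega z=0$ in $\omega$.

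Next, I would invoke a strong maximum principle for the mixed operator with bounded zero-order coefficient to conclude that on each $\omega$ either $w\equiv 0$ or $w>0$ a.e. I would then lift this local dichotomy to $\Omega$ by a standard connectedness argument: setting $F:=\{x\in\Omega:w\equiv 0 \text{ on some neighborhood of }x\}$, the local dichotomy makes both $F$ and $\Omega\setminus F$ open, so connectedness of $\Omega$ forces one of them to be empty; this yields either $u\equiv v$ or $u<v$ in $\Omega$.

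The main obstacle is the precise invocation of the strong maximum principle for $\mathcal{M}_\epsilon$ with a bounded zero-order term acting on $X$-functions, since the nonlocal contribution to $\mathcal{M}_\epsilon$ depends on values of $w$ outside $\omega$. One approach is to cite the formulation available in the literature on mixed local-nonlocal equations (e.g.\ \cite{BiagCCM,BiagiCV}); alternatively, one can argue directly from the classical strong maximum principle for $-\Delta+c$, exploiting the global nonnegativity of $w$ together with the pointwise inequality $(-\Delta)^s w(x_0)\le 0$ at any interior point $x_0$ where $w$ attains the value $0$, which effectively reduces the mixed case to the purely local one.
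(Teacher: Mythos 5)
Your proof is correct and takes essentially the same route as the paper: both derive the local differential inequality $\mathcal{M}_\epsilon w + C(\omega) w \geq 0$ in $\omega$ for $w = v-u$ via the mean value theorem applied to $t \mapsto t^{-\delta(x)}$ and the monotonicity from (H1), then invoke the strong maximum principle for the mixed operator with a bounded zero-order coefficient (the paper cites \cite[Proposition 3.3]{BiagiCV}) followed by a covering/connectedness argument to globalize the dichotomy. Your version is somewhat more explicit about the weak formulation and about why the nonlocal term does not obstruct the local maximum principle, but the substance of the argument is identical.
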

\begin{proof}
Let $\omega\Subset\Omega$ be an arbitrary open set.
Using (H1) and the mean value theorem, we have
\begin{align*}
    \mathcal{M}_\epsilon(v-u)&\geq \lambda\left(\frac{1}{v^{\delta(x)}}-\frac{1}{u^{\delta(x)}}\right)+h(x,v)-h(x,u)\text{ in }\omega \nonumber\\
    &\geq-\lambda\delta(x)\frac{(v-u)}{(u+\theta(v-u))^{1+\delta(x)}}\text{ in }\omega\nonumber\\
    &\geq-C(v-u) \text{ in }\omega,
\end{align*}
for some constant $C=C(\omega)>0$. Which reveals that
\begin{equation*}
    \mathcal{M}_\epsilon(v-u)+C(v-u)\geq 0 \text{ and }v-u\geq 0 \text{ in }\omega.
\end{equation*}
 Hence, the conclusion follows from \cite[Proposition $3.3$]{BiagiCV} together with standard covering argument.
\end{proof}

\section{Proof of Theorem \ref{T1}}\label{PT1}
We define $$\Lambda_\epsilon:=\sup\{\lambda\geq0: \text{ (\ref{ME}) has a solution in }X\}.$$
Throughout this section, we consider $0<\delta_0\leq\delta(x)\leq\delta_1<1$ for all $x\in\overline{\Omega}.$
\begin{Lemma}\label{Lm}
    The following conclusion holds: $0<\Lambda_\epsilon<\infty.$
\end{Lemma}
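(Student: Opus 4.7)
The claim has two parts that I would treat by different methods. For $\Lambda_\epsilon > 0$ I exhibit a solution for all sufficiently small $\lambda$ by finding a non-trivial local minimum of $E_{\lambda,\epsilon}$ near the origin. For $\Lambda_\epsilon < \infty$ I test the weak formulation against the first positive eigenfunction of $\mathcal{M}_\epsilon$ and derive a contradiction for $\lambda$ large.

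For the lower bound, consider $E_{\lambda,\epsilon}$ on the closed ball $\overline{B_\rho} := \{u \in X : \|u\| \leq \rho\}$. By (H3) there exist $\eta > 0$ and $s_0 > 0$ such that $H(x,s) \leq \tfrac{\lambda_1-\eta}{2}s^2$ for $|s| \leq s_0$, while (H2) yields $H(x,s) \leq C|s|e^{\alpha s^2}$ for $|s| > s_0$ and some $\alpha > \alpha_0$. Splitting the integral and using $|u| \leq |u|^3/s_0^2$ on $\{|u| > s_0\}$, H\"older's inequality together with Theorem \ref{Moser} (applied once $\|u\| = \rho$ is small enough that $2\alpha\rho^2 < 4\pi$) give $\int_\Omega H(x,u)\,dx \leq \tfrac{\lambda_1-\eta}{2}\|u\|_{L^2}^2 + C\rho^3$. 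The elementary inequality $s^{1-\delta(x)} \leq 1+s$ for $s \geq 0$ and $\delta(x) \in (0,1)$ controls the singular term by $\lambda \int_\Omega |u|^{1-\delta(x)}/(1-\delta(x))\,dx \leq C\lambda(1+\|u\|)$. Combining these with Poincar\'e, $\|u\|_{L^2}^2 \leq \|u\|^2/\lambda_1$, one obtains on $\partial B_\rho$
$$E_{\lambda,\epsilon}(u) \geq \tfrac{\eta}{2\lambda_1}\rho^2 - C\lambda(1+\rho) - C\rho^3,$$
which is strictly positive once $\rho$ is first fixed small and then $\lambda \leq c\rho^2$. On the other hand, using that $y \mapsto y^a$ is decreasing in $a \in (0,1)$ for $y \in (0,1)$, for small $t > 0$ one has $(t\phi_1)^{1-\delta(x)} \geq (t\phi_1)^{1-\delta_0}$, whence
$$E_{\lambda,\epsilon}(t\phi_1) \leq \tfrac{t^2}{2}N_\epsilon(\phi_1)^2 - c\lambda t^{1-\delta_0} < 0$$
for every $\lambda > 0$ and all sufficiently small $t$, where $\phi_1 > 0$ is the first Dirichlet eigenfunction of $-\Delta$ in $\Omega$, extended by zero. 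Since $E_{\lambda,\epsilon}$ is weakly lower semicontinuous on $X$ (the first term by convexity, the remaining two by the compact embedding of Lemma \ref{Emb} and Vitali's convergence theorem together with the Trudinger-Moser bound), the infimum over the weakly compact set $\overline{B_\rho}$ is attained at some $u_0$ lying in the interior of $B_\rho$. Replacing $u_0$ by $|u_0|$ (which only decreases $E_{\lambda,\epsilon}$) and applying the usual variational manipulations yields a non-negative $u_0$ that solves (\ref{ME}) in the weak sense, and hence $\Lambda_\epsilon \geq \lambda > 0$.

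For the upper bound, the operator $\mathcal{M}_\epsilon$ on $X$ is self-adjoint with compact resolvent via Lemma \ref{Emb}, so it admits a first positive eigenfunction $\psi_1 \in X$ with eigenvalue $\mu_1 > 0$ satisfying $B_\epsilon(\psi_1,v) = \mu_1 \int_\Omega \psi_1 v\,dx$ for every $v \in X$. By (H2) choose $T \geq 1$ with $h(x,t) \geq (\mu_1+1)t$ for all $t \geq T$ and $x \in \overline{\Omega}$. For every $\lambda \geq \lambda^\ast := (\mu_1+1)T^{1+\delta_1}$ one then has the pointwise lower bound
$$\lambda t^{-\delta(x)} + h(x,t) \geq (\mu_1+1)\,t \qquad \text{for all } t > 0,\; x \in \overline{\Omega}:$$
on $\{t \leq T\}$, $\lambda t^{-\delta(x)} \geq \lambda T^{-\delta_1} \geq (\mu_1+1)T \geq (\mu_1+1)t$; on $\{t > T\}$, $h(x,t) \geq (\mu_1+1)t$. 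If $u$ solved (\ref{ME}) for some such $\lambda$, Lemma \ref{WSL} with $v = \psi_1$ would yield
$$\mu_1\int_\Omega u\psi_1\,dx = B_\epsilon(u,\psi_1) = \int_\Omega \bigl(\lambda u^{-\delta(x)} + h(x,u)\bigr)\psi_1\,dx \geq (\mu_1+1)\int_\Omega u\psi_1\,dx,$$
contradicting $u,\psi_1 > 0$ in $\Omega$. Hence $\Lambda_\epsilon \leq \lambda^\ast < \infty$.

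The main obstacle is ensuring that the local minimizer $u_0$ above is a genuine solution in the sense of Definition \ref{def}, in particular that it satisfies the strict positivity condition (a). Non-negativity and non-triviality come easily from $E_{\lambda,\epsilon}(|u_0|) \leq E_{\lambda,\epsilon}(u_0) < 0$, but strict positivity on compact subsets requires either a comparison with the purely singular solution $w_{\lambda,\epsilon}$ of (\ref{San}) (invoking Lemma \ref{LB}) or a regularization of the singular term via $(u+\eta)^{-\delta(x)}$ combined with the strong maximum principle (\cite[Proposition 3.3]{BiagiCV}). The non-differentiability of $E_{\lambda,\epsilon}$ at zero caused by the singular term also has to be handled with care when writing down the Euler-Lagrange inequality.
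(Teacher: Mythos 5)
Your proof is correct and follows essentially the same route as the paper: you establish $\Lambda_\epsilon < \infty$ by testing against the first eigenfunction of $\mathcal{M}_\epsilon$, and you establish $\Lambda_\epsilon > 0$ by exhibiting a non-trivial local minimizer of $E_{\lambda,\epsilon}$ in a small ball, exploiting (H3) and Theorem \ref{Moser} to get positivity of the energy on $\partial B_\rho$ for small $\lambda$ and using the singular term to push the energy below zero along $t\phi$. The one point you flag but do not fully resolve --- that the minimizer must satisfy the strict interior positivity of Definition \ref{def}, and that the singular term prevents naive differentiation of $E_{\lambda,\epsilon}$ at the minimizer --- is handled by the paper by first observing that the monotonicity of the singular and $H$-terms forces $B_\epsilon(u,\phi)\geq 0$ for all $\phi\geq 0$, so $u$ is a nonnegative supersolution of $\mathcal{M}_\epsilon u = 0$ and hence $u>0$ in $\Omega$ by the strong maximum principle of \cite[Proposition 3.3]{BiagiCV}; once $u$ is locally bounded away from zero, the one-sided difference quotient of the singular term is dominated on $\supp\phi$ and Fatou's lemma yields the Euler--Lagrange inequality, with the standard $(u+\eta\phi)^+$ trick then removing the sign restriction on $\phi$. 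This matches the second of the two avenues you suggest, so your outline is consistent with the paper's argument even though you leave this step at the level of a sketch.
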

\begin{proof}
Firstly, we prove that $\Lambda_\epsilon<\infty.$ To this concern, we consider the eigen pair $(\mu_{1,\epsilon},\phi_{\epsilon})$ of the operator $\mathcal{M}_\epsilon$ with Dirichlet boundary condition, i.e.,
\begin{align}\label{EP}
    \begin{cases}
        \mathcal{M}_\epsilon \phi_\epsilon=\mu_{1,\epsilon}\phi_\epsilon \text{ in }\Omega,\\
        \phi_\epsilon>0 \text{ in }\Omega \text{ and } \phi_\epsilon=0 \text{ in }\R^2\setminus\Omega. 
    \end{cases}
\end{align}
We refer to \cite{Valdinoci23} for the existence of such a pair. Using (H2) and continuity of $\delta$, we can find $\lambda^*>0$ such that $\frac{\lambda^*}{s^{\delta(x)}}+h(x,s)\geq 2\mu_{1,\epsilon}s$ for all $s>0$ and for all $x\in\overline{\Omega}.$ Claim that for every $\lambda\geq\lambda^*$, the equation (\ref{ME}) does not have a solution. We prove it by contradiction. If possible, let $u\in X$ be a solution of the equation (\ref{ME}) for some $\lambda\geq\lambda^*$. By incorporating $\phi_\epsilon$ in (\ref{weak2}) we get
\begin{align}
    B_\epsilon(u,\phi_\epsilon)=\lambda\int_\Omega\frac{\phi_\epsilon}{u^{\delta(x)}}\;dx+\int_\Omega h(x,u)\phi_\epsilon\;dx,
\end{align}
which yields
\begin{align*}
 \mu_{1,\epsilon}\int_\Omega\phi_\epsilon u\;dx=\int_\Omega\left(\frac{\lambda}{u^{\delta(x)}}+ h(x,u)\right)\phi_\epsilon\;dx\geq \int_\Omega \left(\frac{\lambda^*}{u^{\delta(x)}}+h(x,u)\right )\phi_\epsilon\;dx\geq 2\mu_{1,\epsilon}\int_\Omega \phi_\epsilon u\;dx,
\end{align*}
which is a contradiction. Hence, $\Lambda_\epsilon<\infty.$ The rest of the proof aims to conclude $\Lambda_\epsilon>0.$ Consider the functional
$$E_{\lambda,\epsilon}(u)=\frac{1}{2}N_\epsilon^2(u)-\lambda\int_\Omega\frac{|u|^{1-\delta(x)}}{(1-\delta(x))}\;dx-\int_\Omega H(x,u)\;dx.$$
Applying (H1) and (H3), we have for $\alpha>\alpha_0,\;q>2$ there exists a constant $C>0$ such that $$H(x,u)\leq \frac{\mu}{2}u^2+C|u|^qe^{\alpha u^2},$$ where $\mu:=\limsup_{s\to 0}\frac{2H(x,s)}{s^2}<\lambda_1.$
Using this fact together with H\"{o}lder inequality and Lemma \ref{Emb}, we obtain
\begin{align}\label{A1}
    E_{\lambda,\epsilon}(u)&\geq \frac{1}{2}\int_\Omega |\nabla u|^2\;dx-\frac{\mu}{2}\int_\Omega u^2\;dx-\lambda\int_\Omega\frac{|u|^{1-\delta(x)}}{(1-\delta(x))}\;dx-C\int_\Omega|u|^qe^{\alpha u^2}\;dx\nonumber\\
    &\geq \frac{1}{2}(1-\frac{\mu}{\lambda_1})\|u\|^2-\lambda\int_\Omega\frac{|u|^{1-\delta(x)}}{(1-\delta(x))}\;dx-C\|u\|_{L^{2q}(\Omega)}^q\left(\int_\Omega e^{2\alpha u^2}\;dx\right)^\frac{1}{2}\nonumber\\
    &\geq \frac{1}{2}(1-\frac{\mu}{\lambda_1})\|u\|^2-\lambda\int_\Omega\frac{|u|^{1-\delta(x)}}{(1-\delta(x))}\;dx-C\|u\|^q\left(\int_\Omega e^{2\alpha\|u\|^2\left(\frac{u}{\|u\|}\right)^2}\;dx\right)^\frac{1}{2}.
\end{align}
Choose $\sigma>0$ such that $2\alpha\sigma^2\leq4\pi.$ Due to Theorem \ref{Moser}, for every $\|u\|\leq \sigma$, we have 
\begin{align}
     E_{\lambda,\epsilon}(u)\geq \frac{1}{2}(1-\frac{\mu}{\lambda_1})\|u\|^2-\lambda\int_\Omega\frac{|u|^{1-\delta(x)}}{(1-\delta(x))}\;dx-C\|u\|^q
\end{align}

We choose $R_0\in(0,\sigma)$, small enough such that $\frac{1}{2}(1-\frac{\mu}{\lambda_1})R_0^2-CR_0^q\geq \frac{1}{4}(1-\frac{\mu}{\lambda_1})R_0^2$. Now, for every $R\leq R_0$, we define $\Lambda:=\frac{\frac{1}{8}(1-\frac{\mu}{\lambda_1})R^2}{\sup_{\|u\|=R}\int_\Omega\frac{|u|^{1-\delta(x)}}{(1-\delta(x))}\;dx}$ (independent of $\epsilon$).
Hence, for every $\lambda\in(0,\Lambda)$, we have
\begin{align}\label{A22}
    E_{\lambda,\epsilon}(u)\geq\begin{cases}
        \frac{1}{8}(1-\frac{\mu}{\lambda_1})R^2, \text{ if }u\in\partial B(0,R)\\
        -\frac{3}{8}(1-\frac{\mu}{\lambda_1})R^2, \text{ if }u\in B(0,R).
    \end{cases}
\end{align}

Fix a non-negative function $\phi\in C^\infty_c(\Omega)$ such that $0\leq\phi\leq 1$ in $\Omega.$ Therefore, for $t>0$
\begin{align*}
    \frac{E_{\lambda,\epsilon}(t\phi)}{t}&=\frac{t}{2}N_\epsilon^2(\phi)-\lambda\int_\Omega \frac{\phi^{1-\delta(x)}}{t^{\delta(x)}(1-\delta(x))}\;dx-\frac{1}{t}\int_\Omega H(x,t\phi)\;dx\nonumber\\
    &\leq \frac{t}{2}N_\epsilon^2(\phi)-\frac{\lambda}{t^{\delta_0}}\int_\Omega \frac{\phi^{1-\delta(x)}}{(1-\delta(x))}\;dx.
\end{align*}
Thus, one can choose $t>0$ such that $t\phi\in B(0,R)$ and $E_{\lambda,\epsilon}(t\phi)<0.$ Consequently, $m_{\lambda,\epsilon}:=\inf_{B(0,R)}E_{\lambda,\epsilon}(u)<0.$  Suppose $\{u_n\}_{n\in\N}$ be a minimizing sequence, i.e., $E_{\lambda,\epsilon}(u_n)\to m_{\lambda,\epsilon}$ as $n\to\infty$. Due to Lemma \ref{Emb}, up to a subsequence $u_n\rightharpoonup u$ weakly in $X$, $u_n\to u$ a.e. in $\Omega$ and $u_n\to u$ in $L^1(\Omega).$ We claim that $$\int_\Omega H(x,u_n)\;dx\to\int_\Omega H(x,u)\;dx\text{ and }\int_\Omega\frac{|u_n|^{1-\delta(x)}}{(1-\delta(x))}\;dx\to \int_\Omega\frac{|u|^{1-\delta(x)}}{(1-\delta(x))}\;dx.$$
To this end, we first observe that for the above $\alpha>\alpha_0$
\begin{align}\label{A2}
    \int_\Omega |H(x,u_n)-H(x,u)|^2\;dx&\leq 2\int_\Omega |H(x,u_n)|^2\;dx+2\int_\Omega |H(x,u)|^2\;dx\nonumber\\
    &\leq C\int_\Omega e^{2\alpha\|u_n\|^2\left(\frac{u_n}{\|u_n\|}\right)^2}\;dx+C\int_\Omega e^{2\alpha\|u\|^2\left(\frac{u}{\|u\|}\right)^2}\;dx\leq C,
\end{align}
where $C>0$ is a constant independent of $n$. In the second inequality, we have used the fact $H(x,s)\leq Ce^{\alpha s^2}$ for all $s\geq 0.$ The first part of the above claim follows from Egoroff's theorem and (\ref{A2}). To prove the second one, we use the estimation
\begin{align}
\frac{|u_n|^{1-\delta(x)}}{(1-\delta(x))}\leq \frac{1}{1-\delta_1}\left( |u_n|^{1-\delta(x)}\chi_{\{u_n< 1\}}+|u_n|^{1-\delta(x)}\chi_{\{u_n\geq 1\}}\right)\leq C(1+|u_n|).
\end{align}
Since $\int_\Omega (1+|u_n|)\;dx\to\int_\Omega (1+|u|)\;dx$, we have $$\lim_{n\to\infty}\int_\Omega\frac{|u_n|^{1-\delta(x)}}{(1-\delta(x))}\;dx=\int_\Omega\frac{|u|^{1-\delta(x)}}{(1-\delta(x))}\;dx.$$
Now, since $u_n\rightharpoonup u$ weakly in $X$, we have
\begin{align}
    E_{\lambda,\epsilon}(u)\leq \lim E_{\lambda,\epsilon}(u_n)=m_{\lambda,\epsilon},
\end{align}
which gives $E_{\lambda,\epsilon}(u)=m_{\lambda,\epsilon}.$ Using the monotone property of $H(x,\cdot)$ in $(0,\infty)$, we get $E_{\lambda,\epsilon}(|u|)=m_{\lambda,\epsilon}.$ Hence, we can assume that $u\in B(0,R)$ is a non-negative local minimizer of the functional $E_{\lambda,\epsilon}.$ Now, we will show that $u$ is a solution of the equation (\ref{ME}). To this aim, for $0\leq \phi\in X$, we have
\begin{align}\label{A3}
    \liminf_{t\to 0^+}\frac{E_{\lambda,\epsilon}(u+t\phi)-E_{\lambda,\epsilon}(u)}{t}\geq 0,
\end{align}
which implies
\begin{align*}
    B_\epsilon(u,\phi)\geq 0,
\end{align*}
which reveals that $u\in X$ is a super-solution of $\mathcal{M}_\epsilon u=0$ in $\Omega.$ Applying \cite[Proposition 3.3]{BiagiCV} with $a(x)=0$, we have $u>0$ in $\Omega.$ For small $t>0$, there exist $t_0\in(0,t)$ such that
\begin{align*}
    \Big|\frac{(u+t\phi)^{1-\delta(x)}-u^{1-\delta(x)}}{t(1-\delta(x))}\Big|=|(u+t_0\phi)^{-\delta(x)}\phi|\leq C\phi\text{ in }\omega:=\mathrm{supp}(\phi).
\end{align*}
By taking $t\to 0$ in (\ref{A3}) and using Fatou's lemma, we deduce
\begin{align*}
    B_\epsilon(u,\phi)-\lambda\int_\Omega\frac{\phi}{u^{\delta(x)}}\;dx-\int_\Omega h(x,u)\phi\;dx\geq 0, \fa\phi\in X, \phi\geq 0.
\end{align*}
For any $\phi\in X$, we incorporate $(u+\eta\phi)^+$ as a test function in (\ref{A3}), then divide both sides by $\eta$ and take $\eta\to 0$ to obtain
\begin{align*}
   B_\epsilon(u,\phi)-\lambda\int_\Omega\frac{\phi}{u^{\delta(x)}}\;dx-\int_\Omega h(x,u)\phi\;dx\geq 0.
\end{align*}
Hence, $u\in X$ is a solution of the equation (\ref{ME}) for every $\lambda\in (0,\Lambda).$ Consequently, $\Lambda_\epsilon>0.$
\end{proof}


\begin{Lemma}\label{L2}
    Suppose all the assumptions of Theorem \ref{T1} are satisfied and $\epsilon>0$. Then, for every $\lambda\in (0,\Lambda_\epsilon],$ the equation (\ref{ME}) admits a solution $u_{\lambda,\epsilon}\in X.$
\end{Lemma}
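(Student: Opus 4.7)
The plan is to handle the open interval $\lambda \in (0,\Lambda_\epsilon)$ by sub- and super-solution combined with constrained minimization, and then reach the endpoint $\lambda = \Lambda_\epsilon$ via a limit procedure. For $\lambda \in (0,\Lambda_\epsilon)$, by the definition of $\Lambda_\epsilon$ as a supremum there exists $\lambda' \in (\lambda,\Lambda_\epsilon)$ for which (\ref{ME}) has a solution $\bar u \in X$. Since (H1) forces $h \geq 0$ on $\overline\Omega \times [0,\infty)$ and $\lambda' > \lambda$, the function $\bar u$ is a super-solution of (\ref{ME}) with parameter $\lambda$, while a solution $w_{\lambda,\epsilon}$ of the purely singular problem (\ref{San}) is a sub-solution of (\ref{ME}) (because $h \geq 0$). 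Testing the super- and sub-solution inequalities against $(w_{\lambda,\epsilon} - \bar u)^+ \in X$ and using that $s \mapsto s^{-\delta(x)}$ is strictly decreasing yields $\|(w_{\lambda,\epsilon} - \bar u)^+\|^2 \leq 0$, and hence $w_{\lambda,\epsilon} \leq \bar u$ in $\Omega$.

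Having ordered the pair, I would minimize $E_{\lambda,\epsilon}$ on the closed convex set $\mathcal{K} := \{u \in X : w_{\lambda,\epsilon} \leq u \leq \bar u \text{ a.e.\ in } \Omega\}$. The pointwise bound $0 \leq u \leq \bar u$ together with $H(x,s) \leq s\,h(x,s)$ (from the monotonicity built into (H1)) and the weak formulation for $\bar u$ yields $\int_\Omega H(x,\bar u) + \int_\Omega \bar u^{1-\delta(x)} < \infty$, giving coercivity and weak lower semicontinuity of $E_{\lambda,\epsilon}$ on $\mathcal{K}$; the direct method then produces a minimizer $u_{\lambda,\epsilon} \in \mathcal{K}$. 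A standard truncation argument — using admissible perturbations $u_{\lambda,\epsilon} + t\phi$ clipped to $[w_{\lambda,\epsilon},\bar u]$ and letting $t \to 0^{\pm}$, as in \cite{ADICCM, BiagiCV, Haitao} — upgrades this constrained minimizer to a weak solution of (\ref{ME}) in the sense of Definition \ref{def}.

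For the endpoint $\lambda = \Lambda_\epsilon$, I would pick $\lambda_n \nearrow \Lambda_\epsilon$ together with the associated solutions $u_n$ obtained in the previous step. A direct computation on the purely singular problem gives $E_{\lambda_n,\epsilon}(w_{\lambda_n,\epsilon}) < 0$, so $E_{\lambda_n,\epsilon}(u_n) < 0$ as well. Subtracting one half of the weak formulation (tested against $u_n$) from this energy inequality produces
\[
  \int_\Omega h(x,u_n)\,u_n \;-\; 2\int_\Omega H(x,u_n) \;\leq\; C\bigl(1+N_\epsilon(u_n)\bigr),
\]
and then (H4) combined with a dichotomy on $\{u_n \leq M_0\}$ versus $\{u_n > M_0\}$ — on which $u_n - 2M \geq u_n/2$ — converts this into $\int_\Omega h(x,u_n)\,u_n \leq C(1+N_\epsilon(u_n))$, hence $N_\epsilon(u_n)^2 \leq C(1+N_\epsilon(u_n))$. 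Extracting a weakly convergent subsequence $u_n \rightharpoonup u_{\Lambda_\epsilon}$ in $X$, I would use the uniform interior positivity of Lemma \ref{LB} together with Vitali's theorem to pass to the limit in the singular term, and combine Theorem \ref{Moser} with Lemma \ref{PL} to pass to the limit in $\int_\Omega h(x,u_n)\phi$, yielding a solution of (\ref{ME}) at $\lambda = \Lambda_\epsilon$.

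The main obstacle I foresee is precisely closing the uniform $N_\epsilon$-bound at the endpoint: the critical exponential growth from (H2) rules out any naive Moser-iteration-style test-function argument, and the AR-type condition (H4) must be combined delicately with the energy-derivative identity through the above dichotomy to avoid a circular estimate. A second delicate point is the limit $\int_\Omega h(x,u_n)\phi \to \int_\Omega h(x,u_{\Lambda_\epsilon})\phi$, which requires strong $L^1_{\mathrm{loc}}$ convergence of $h(x,u_n)$; this will rest on equi-integrability derived from the $N_\epsilon$-bound together with the Moser--Trudinger embedding.
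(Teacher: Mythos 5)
Your plan follows the same overall scheme as the paper's proof: for $\lambda<\Lambda_\epsilon$ you build the ordered pair $\underline{u}\le\overline{u}$ from the purely singular problem and a solution at some $\lambda'>\lambda$, minimize $E_{\lambda,\epsilon}$ over the order interval $\{w_{\lambda,\epsilon}\le u\le\overline{u}\}$, and promote the constrained minimizer to a weak solution by the truncation argument; at the endpoint you combine $E_{\lambda_n,\epsilon}(u_n)\le 0$ with the weak formulation tested against $u_n$ and (H4) to extract a uniform $N_\epsilon$-bound and pass to the limit. (Your comparison via $(w_{\lambda,\epsilon}-\overline{u})^+$ replaces the paper's regularized-indicator $\psi_\gamma$; both work.)

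The one step that would fail as stated is the limit of $\int_\Omega h(x,u_n)\phi\,dx$. A uniform bound $N_\epsilon(u_n)\le C$ does not give equi-integrability of $\{e^{\alpha u_n^2}\}$ through Theorem~\ref{Moser}, because $\alpha C^2$ may well exceed $4\pi$; and Lemma~\ref{PL} requires $\|u_n\|=1$ and control of the defect $1-\|u\|^2$, neither of which is available here --- Lions' lemma is the right tool in the mountain-pass estimate for Theorem~\ref{T2}, not at the endpoint of Theorem~\ref{T1}. What actually works, and what your own estimates already produce, is the uniform bound $\int_\Omega h(x,u_n)u_n\,dx\le C$, which gives uniform integrability of $h(\cdot,u_n)$ by a level-set cut:
\begin{equation*}
\int_{\{u_n>N\}}h(x,u_n)\,dx\le\frac{1}{N}\int_\Omega h(x,u_n)u_n\,dx\le\frac{C}{N}.
\end{equation*}
The paper uses exactly this splitting into $\{u_n\le N\}$ and $\{u_n>N\}$; with that replacement your Vitali/Egoroff scheme closes the argument.
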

\begin{proof}
We split the proof into two cases.\\
\textbf{Case-I:} We assume $0<\lambda<\Lambda_\epsilon.$ In view of Perron's method, we consider the solutions $\underline{u}$ and $\overline{u}$ to the equations 
\begin{align}\label{A11}
    \begin{cases}
        \mathcal{M}_\epsilon u=\frac{\lambda}{u^{\delta(x)}} \text{ in }\Omega,\\
    u>0 \text{ in }\Omega \text{ and }u=0\text{ in }\R^2\setminus\Omega,
    \end{cases}
\end{align}
{and}
\begin{align}\label{RAN}
    \begin{cases}
        \mathcal{M}_\epsilon u=\frac{\overline{\lambda}}{u^{\delta(x)}}+h(x,u) \text{ in }\Omega,\\
        u>0 \text{ in }\Omega \text{ and } u=0\text{ in }\R^2\setminus\Omega,
    \end{cases}
\end{align}
respectively, where $\overline{\lambda}\in(\lambda,\Lambda_\epsilon).$ For the existence of $\underline{u}$, we refer to \cite{PG24}. It is immediate to show that $\underline{u}$ and $\overline{u}$ are sub and super solutions of the equation (\ref{ME}), respectively. Claim: $\underline{u}\leq\overline{u}$ in $\Omega.$ To this concern, define a non-decreasing smooth function $\psi:\R\to\R$ such that 
\begin{equation}
   \psi(t)= \begin{cases}
        1 &\text{ if } t\geq 1,\\
        0&\text{ if }t\leq 0.
    \end{cases}
\end{equation}
Let $\psi_\gamma(t)=\psi(\frac{t}{\gamma}),$ for $\gamma>0.$ Incorporating $\phi=\psi_\gamma(\underline{u}-\overline{u})$ in the weak formulations of (\ref{A11}) and (\ref{RAN}), then subtracting one from the other, we obtain
\begin{align}\label{Y1}
    B_\epsilon(\overline{u}-\underline{u}, \phi)&=\int_\Omega\left(\frac{\overline{\lambda}}{\overline{u}^{\delta(x)}}-\frac{\lambda}{\underline{u}^{\delta(x)}}\right)\phi\;dx+\int_\Omega h(x,\overline{u})\phi\;dx\nonumber\\
    &\geq \lambda\int_\Omega \left(\frac{1}{\overline{u}^{\delta(x)}}-\frac{1}{\underline{u}^{\delta(x)}}\right)\phi\;dx.
\end{align}
By using the monotone property of $\psi,$ (\ref{Y1}) yields
\begin{align*}
    \lambda\int_\Omega \left(\frac{1}{\overline{u}^{\delta(x)}}-\frac{1}{\underline{u}^{\delta(x)}}\right)\phi\;dx&\leq -\int_\Omega |\nabla(\overline{u}-\underline{u})|^2\psi'_\gamma(\underline{u}-\overline{u})\;dx\nonumber\\
    &+\int_{\R^2}\int_{\R^2}\frac{((\overline{u}-\underline{u})(x)-(\overline{u}-\underline{u})(y))(\phi(x)-\phi(y))}{|x-y|^{2+2s}}\;dx\;dy\nonumber\\
    &\leq 0,
\end{align*}
 which reveals
 \begin{align*}
     |\{x\in\Omega: \underline{u}>\overline{u}\}|=0.  
 \end{align*}
Consequently, $\underline{u}\leq \overline{u}$ in $\Omega.$
Now, we define $$\mathcal{M}:=\{u\in X: \underline{u}\leq u\leq \overline{u} \text{ in }\Omega\},$$ which is a closed and convex subset of $X$. Define $m:=\inf_{\mathcal{M}}E_{\lambda,\epsilon}(u).$ We claim that this infimum is achieved by a solution of the equation (\ref{ME}). To this end, we suppose $\{u_n\}_{n\in\N}$ be a sequence such that $E_{\lambda,\epsilon}(u_n)\to m$. That means, 
\begin{align*}
    \frac{1}{2}N_\epsilon^2(u_n)&=m+\lambda\int_\Omega\frac{|u_n|^{1-\delta(x)}}{(1-\delta(x))}\;dx+\int_\Omega H(x,u_n)\;dx+o(1)\\
    &\leq m+\lambda\int_\Omega\frac{|\overline{u}|^{1-\delta(x)}}{(1-\delta(x))}\;dx+\int_\Omega H(x,\overline{u})\;dx+o(1),
\end{align*}
which yields $\{u_n\}_{n\in\N}$ is bounded in $X.$ Due to Lemma \ref{Emb}, there exists $u\in X$ such that up to a subsequence $u_n\rightharpoonup u$ weakly in $X$, $u_n\to u$ a.e. in $\Omega$ and $u_n\to u$ strongly in $
L^1(\Omega).$ Since $u_n\leq \overline{u}$ in $\Omega$, by Lebesgue's dominated convergence theorem, we have
$$\int_\Omega H(x,u_n)\;dx\to\int_\Omega H(x,u)\;dx\text{ and }\int_\Omega\frac{|u_n|^{1-\delta(x)}}{(1-\delta(x))}\;dx\to \int_\Omega\frac{|u|^{1-\delta(x)}}{(1-\delta(x))}\;dx.$$
By using the weakly lower semi continuity of the norm function, we obtain
$$E_{\lambda,\epsilon}(u)\leq \lim E_{\lambda,\epsilon}(u_n)=m.$$ Moreover, since $u\in\mathcal{M}$, we get $E_{\lambda,\epsilon}(u)=m.$ The rest of the proof is devoted to conclude that $u$ solves the equation (\ref{ME}). Let $\phi\in C^\infty_c(\Omega)$. For small enough $\tau>0$, we define
$$\phi^\tau:=(u+\tau\phi-\overline{u})^+,\;\phi_\tau:=(u+\tau\phi-\underline{u})^-,$$ and $$w^\tau:=\begin{cases}
   \overline{u}, \text{ if } u+\tau\phi\geq\overline{u},\\
   u+\tau\phi, \text{ if } \underline{u}\leq u+\tau\phi\leq\overline{u},\\
   \underline{u}, \text{ if } u+\tau\phi\leq\underline{u}.
\end{cases}$$
Therefore, $w^\tau=u+\tau\phi-\phi^\tau+\phi_\tau\in\mathcal{M}.$ Since $\mathcal{M}$ is convex, one has
\begin{align*}
    \lim_{t\to 0^+}\frac{E_{\lambda,\epsilon}(u+t(w^\tau-u))-E_{\lambda,\epsilon}(u)}{t}\geq 0.
\end{align*}
By similar argument as in the previous theorem, we derive
\begin{align*}
    B_\epsilon(u,w^\tau-u)-\lambda\int_\Omega\frac{w^\tau-u}{u^{\delta(x)}}\;dx-\int_\Omega h(x,u)(w^\tau-u)\;dx\geq 0,
\end{align*}
which yields
\begin{align}\label{A4}
    B_\epsilon(u,\phi)-\lambda\int_\Omega\frac{\phi}{u^{\delta(x)}}-\int_\Omega h(x,u)\phi\;dx\geq\frac{1}{\tau}(R^\tau-R_\tau),
\end{align}
where $$R^\tau=B_\epsilon(u,\phi^\tau)-\lambda\int_\Omega\frac{\phi^\tau}{u^{\delta(x)}}-\int_\Omega h(x,u)\phi^\tau\;dx,$$ and $$R_\tau=B_\epsilon(u,\phi_\tau)-\lambda\int_\Omega\frac{\phi_\tau}{u^{\delta(x)}}-\int_\Omega h(x,u)\phi_\tau\;dx.$$
Now, since $\overline{u}$ is a super solution of (\ref{ME}) and (H1) holds, we have 
\begin{align}\label{A5}
    R^\tau&= B_\epsilon(u-\overline{u},\phi^\tau)+B_\epsilon(\overline{u},\phi^\tau)-\lambda\int_\Omega\frac{\phi^\tau}{u^{\delta(x)}}-\int_\Omega h(x,u)\phi^\tau\;dx\nonumber\\
    &\geq B_\epsilon(u-\overline{u},\phi^\tau)-\lambda\int_\Omega\left(\frac{1}{u^{\delta(x)}}-\frac{1}{\overline{u}^{\delta(x)}}\right )\phi^\tau\;dx+\int_\Omega \left(h(x,\overline{u})-h(x,u)\right )\phi^\tau\;dx\nonumber\\
    &\geq B_\epsilon(u-\overline{u},\phi^\tau)-\lambda\int_\Omega\left(\frac{1}{u^{\delta(x)}}-\frac{1}{\overline{u}^{\delta(x)}}\right )\phi^\tau\;dx\nonumber\\
    &= \int_{\Omega^\tau}|\nabla(u-\overline{u})|^2\;dx+\tau\int_{\Omega^\tau}\nabla(u-\overline{u})\cdot\nabla\phi\;dx+I_1-\lambda\int_{\Omega^\tau}\left(\frac{1}{u^{\delta(x)}}-\frac{1}{\overline{u}^{\delta(x)}}\right )(u-\overline{u})\;dx\nonumber\\
    &-\lambda\tau\int_{\Omega^\tau}\left(\frac{1}{u^{\delta(x)}}-\frac{1}{\overline{u}^{\delta(x)}}\right )\phi\;dx
    \nonumber\\
    &\geq \tau\int_{\Omega^\tau}\nabla(u-\overline{u})\cdot\nabla\phi\;dx+I_1-\lambda\int_{\Omega^\tau}\left(\frac{1}{u^{\delta(x)}}-\frac{1}{\overline{u}^{\delta(x)}}\right )(u-\overline{u})\;dx\nonumber\\
    &-\lambda\tau\int_{\Omega^\tau}\left(\frac{1}{u^{\delta(x)}}
    -\frac{1}{\overline{u}^{\delta(x)}}\right )\phi\;dx\nonumber\\
    &\geq \tau\int_{\Omega^\tau}\nabla(u-\overline{u})\cdot\nabla\phi\;dx+I_1-\lambda\tau\int_{\Omega^\tau}\left(\frac{1}{u^{\delta(x)}}
    -\frac{1}{\overline{u}^{\delta(x)}}\right )\phi\;dx,
\end{align}
where $\Omega_\tau=\mathrm{supp}(\phi^\tau)$ and $I_1=\epsilon\int_{\R^2}\int_{\R^2}\frac{((u-\overline{u})(x)-(u-\overline{u})(y))(\phi^\tau(x)-\phi^\tau(y))}{|x-y|^{2+2s}}\;dx\;dy.$ By using the similar estimation, as in \cite[Page 8 ]{Jac19}, we have $\frac{I_1}{\tau}\geq o(1)$ as $\tau\to 0.$ Moreover, since $|\Omega^\tau|\to 0$ as $\tau\to 0$, (\ref{A5}) yields
$\frac{R^\tau}{\tau}\geq o(1)$ as $\tau\to 0.$ Similarly, one can prove $\frac{R_\tau}{\tau}\leq o(1)$ as $\tau\to 0.$ Taking $\tau\to 0$ in (\ref{A4}) and using these facts, we obtain
\begin{align}
    B_\epsilon(u,\phi)-\lambda\int_\Omega\frac{\phi}{u^{\delta(x)}}-\int_\Omega h(x,u)\phi\;dx\geq 0, \fa \phi\in C^\infty_c(\Omega).
\end{align}
Hence, $u\in X$ is a solution of the equation (\ref{ME}).\\
\textbf{Case-II:} Suppose $\lambda=\Lambda_\epsilon$ and $\{\lambda_n\}$ be an increasing sequence such that $\lambda_n\to\lambda$ as $n\to\infty.$ Suppose $u_n\in X$ is the solution of (\ref{ME}) for $\lambda=\lambda_n$. Our next claim is that the sequence $\{u_n\}_{n\in\N}$ is bounded in $X.$ In order to prove this, we put $u_n$ in the weak formulation of (\ref{ME}) for $\lambda=\lambda_n$ and obtain
\begin{align}\label{A6}
    N_\epsilon^2(u_n)&=\lambda_n\int_\Omega u_n^{1-\delta(x)}\;dx+\int_\Omega h(x,u_n)u_n\;dx.
\end{align}
Furthermore,
\begin{align}\label{A7}
    E_{\lambda_n,\epsilon}(u_n)\leq E_{\lambda_n,\epsilon}(\underline{u}_n)&=\frac{1}{2}N_\epsilon^2(\underline{u}_n)-\lambda_n\int_\Omega\frac{\underline{u}_n^{1-\delta(x)}}{1-\delta(x)}\;dx-\int_\Omega H(x,\underline{u}_n)\;dx\nonumber\\
    &=\lambda_n\int_\Omega \left(\frac{1}{2}-\frac{1}{1-\delta(x)}\right)\underline{u}_n^{1-\delta(x)}\;dx-\int_\Omega H(x,\underline{u}_n)\;dx\nonumber\\
    &=-\frac{\lambda_n}{2}\int_\Omega \left(\frac{1+\delta(x)}{1-\delta(x)}\right)\underline{u}_n^{1-\delta(x)}\;dx-\int_\Omega H(x,\underline{u}_n)\;dx\leq 0,
\end{align}
where $\underline{u}_n$ is the sub-solution of (\ref{ME}) for $\lambda=\lambda_n$, satisfying (\ref{A11}). Thus, utilizing (\ref{A6}), and (\ref{A7}) we deduce
\begin{align}\label{A8}
    \frac{1}{2}\int_\Omega h(x,u_n)u_n\;dx-\int_\Omega H(x,u_n)\;dx\leq \frac{\lambda_n}{2}\int_\Omega\frac{1+\delta(x)}{1-\delta(x)}u_n^{1-\delta(x)}\;dx.
\end{align}
Using the property (H4), for every $\theta>2$, there exists $M_1>0$ such that 
\begin{equation}\label{G3}
    H(x,s)\leq \frac{1}{\theta}h(x,s)s+M_1\text{ for all } s\geq 0.
\end{equation}
Thus, we have 
\begin{align*}
    \left(\frac{1}{2}-\frac{1}{\theta}\right)\int_\Omega h(x,u_n)u_n\;dx\leq M_1|\Omega|+\frac{\lambda_n}{2}\int_\Omega\frac{1+\delta(x)}{1-\delta(x)}u_n^{1-\delta(x)}\;dx,
\end{align*}
which gives
\begin{align}\label{A9}
    \int_\Omega h(x,u_n)u_n\;dx\leq C\left( 1+\int_\Omega u_n^{1-\delta(x)}\;dx\right).
\end{align}
Combining (\ref{A6}) and (\ref{A9}), we infer 
\begin{align}
    N_\epsilon^2(u_n)&\leq C\left(1+\int_\Omega u_n^{1-\delta(x)}\;dx\right)\nonumber\\
    &\leq C\left(1+\int_{\{x\in\Omega:u_n\leq 1\}} |u_n|^{1-\delta(x)}\;dx+\int_{\{x\in\Omega:u_n\geq 1\}} |u_n|^{1-\delta(x)}\;dx\right)\nonumber\\
    &\leq C\left(1+\|u_n\|_{L^1(\Omega)}\right),
\end{align}
where $C>0$ is a constant independent of $n.$ This leads to conclude that $\{u_n\}_{n\in\N}$ is bounded in $X.$ By Lemma \ref{Emb}, there exists $u\in X$ such that $u_n\rightharpoonup u$ weakly $X$, $u_n\to u$ a.e. in $\Omega$ and $u_n\to u$ strongly in $L^p(\Omega)$ for all $1\leq p<\infty.$
Furthermore, we have $u_n\geq \underline{u}_1$ in $\Omega$. Since for every $\omega\Subset\Omega$, there exists $C(\omega)>0$ such that $\underline{u}_1\geq C(\omega)$ in $\omega$ so we have $u_n\geq C(\omega)$ in $\omega$. 
For every $\phi\in C^\infty_c(\Omega)$, one has 
\begin{align}\label{A10}
    B_\epsilon(u_n,\phi)=\lambda_n\int_\Omega \frac{\phi}{u_n^{\delta(x)}}\;dx+\int_\Omega h(x,u_n)\;\phi\;dx.
\end{align}
By Lebesgue's dominated convergence theorem, we have
$$\lim_{n\to\infty}\lambda_n\int_\Omega \frac{\phi}{u_n^{\delta(x)}}\;dx=\lambda\int_\Omega \frac{\phi}{u^{\delta(x)}}\;dx.$$
From the estimation (\ref{A9}), we have 
\begin{align}
    \int_\Omega|h(x,u_n)\phi|&= \int_{\{u_n\leq N\}} |h(x,u_n)\phi|\;dx+ \int_{\{u_n\geq N\}} |h(x,u_n)\phi|\;dx\nonumber\\
    &=  \int_{\{u_n\leq N\}} |h(x,u_n)\phi|\;dx+ \frac{1}{N}\int_{\{u_n\geq N\}} |h(x,u_n)u_n\phi|\;dx\nonumber\\
    &= \int_{\{u_n\leq N\}} |h(x,u_n)\phi|\;dx+ O(\frac{1}{N}).
\end{align}
 Thus, we get
$$\lim_{n\to\infty}\int_\Omega h(x,u_n)\phi\;dx=\int_\Omega h(x,u)\phi\;dx.$$
Taking $n\to\infty$ in (\ref{A10}) we obtain
\begin{align}
    B_\epsilon(u,\phi)=\lambda\int_\Omega \frac{\phi}{u^{\delta(x)}}\;dx+\int_\Omega h(x,u)\;\phi\;dx.
\end{align}
Consequently, $u$ is a solution of the equation (\ref{ME}) for $\lambda=\Lambda_\epsilon.$ This completes the proof.
\end{proof}
\begin{Lemma}\label{L3}
    Let $u$ be the solution of (\ref{ME}) for $\lambda\in(0,\Lambda_\epsilon)$, obtained in Lemma \ref{L2}. Then, $u$ is a local minimizer of the energy functional $E_{\lambda,\epsilon}.$
\end{Lemma}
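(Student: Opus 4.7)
The plan is to leverage the fact, established in Case I of the proof of Lemma \ref{L2}, that $u$ is the global minimizer of $E_{\lambda,\epsilon}$ on the closed convex order interval $\mathcal{M} = \{v \in X : \underline{u} \le v \le \overline{u}\}$, where $\underline{u}$ solves the purely singular problem at level $\lambda$ and $\overline{u}$ solves the perturbed problem at some $\overline{\lambda} \in (\lambda, \Lambda_\epsilon)$. The upgrade from $\mathcal{M}$-minimality to $X$-local minimality is a standard consequence of strict separation plus a truncation argument in the spirit of \cite{ADICCM, BiagiCV}.

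The first step is to show the strict separation $\underline{u} < u < \overline{u}$ pointwise in $\Omega$. Because $\overline{u}$ satisfies $(P^\epsilon_{\overline{\lambda}})$ with $\overline{\lambda} > \lambda$, it is a strict super-solution of (\ref{ME}) at level $\lambda$; since $u \le \overline{u}$ with $u \not\equiv \overline{u}$, Lemma \ref{Comp} yields $u < \overline{u}$ in $\Omega$. The lower inequality follows analogously using that $h(x,u) \ge 0$ does not vanish identically on $\Omega$, which by (H1) and (H2) makes $\underline{u}$ a strict sub-solution of (\ref{ME}).

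For the main argument, suppose for contradiction that $u$ is not an $X$-local minimizer. Choose $\{v_n\} \subset X$ with $v_n \to u$ in $X$ and $E_{\lambda,\epsilon}(v_n) < E_{\lambda,\epsilon}(u)$, and project onto $\mathcal{M}$ by setting $v_n^* := \min\{\max\{v_n, \underline{u}\}, \overline{u}\} \in \mathcal{M}$. Writing $v_n = v_n^* + \phi_n - \psi_n$ with $\phi_n := (v_n-\overline{u})^+$ and $\psi_n := (\underline{u}-v_n)^+$ supported on the disjoint sets $A_n = \{v_n > \overline{u}\}$ and $B_n = \{v_n < \underline{u}\}$, the strict separation forces $|A_n|+|B_n| \to 0$ and $\phi_n, \psi_n \to 0$ in $X$ by continuity of the positive-part map. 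The key computation is to show $E_{\lambda,\epsilon}(v_n) - E_{\lambda,\epsilon}(v_n^*) \ge 0$ up to lower-order error, using (a) the pointwise identity $|\nabla v_n|^2 - |\nabla v_n^*|^2 = |\nabla\phi_n|^2 + |\nabla\psi_n|^2 + 2\nabla\overline{u}\cdot\nabla\phi_n - 2\nabla\underline{u}\cdot\nabla\psi_n$ on the local part and its analog on the fractional semi-norm, (b) concavity of $t \mapsto t^{1-\delta(x)}/(1-\delta(x))$ together with convexity of $H(x,\cdot)$ from (H1), and (c) testing the super-solution inequality for $\overline{u}$ against $\phi_n$ and the sub-solution identity for $\underline{u}$ against $\psi_n$, whereupon the first-order pairings cancel. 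Since $v_n^* \in \mathcal{M}$ forces $E_{\lambda,\epsilon}(v_n^*) \ge E_{\lambda,\epsilon}(u)$, combining yields $E_{\lambda,\epsilon}(u) \le E_{\lambda,\epsilon}(v_n^*) \le E_{\lambda,\epsilon}(v_n) + o(1) < E_{\lambda,\epsilon}(u) + o(1)$, and the strictly positive gap $(\overline{\lambda}-\lambda)\int_{A_n}\phi_n/\overline{u}^{\delta(x)}\,dx$ arising from the super-solution inequality for $\overline{u}$ provides the needed strict margin to close the contradiction.

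The main obstacle is handling the nonlocal contribution $\epsilon N_\epsilon^2$, since the double integral does not admit a clean localization: values of $v_n^*$ outside $A_n \cup B_n$ still couple into the fractional pairing. This will be dealt with through the exact expansion
\[
N_\epsilon^2(v_n) - N_\epsilon^2(v_n^*) = N_\epsilon^2(\phi_n - \psi_n) + 2\epsilon \iint_{\R^2 \times \R^2} \frac{(v_n^*(x)-v_n^*(y))((\phi_n-\psi_n)(x)-(\phi_n-\psi_n)(y))}{|x-y|^{2+2s}}\,dx\,dy,
\]
where the bilinear term is bounded by splitting the outer integrals according to whether each variable lies in $A_n$, $B_n$, or the complement of $A_n \cup B_n$, replacing $v_n^*$ by $\overline{u}$ or $\underline{u}$ accordingly, and paying an error that vanishes as $\|v_n-u\| \to 0$ thanks to $|A_n|+|B_n| \to 0$. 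This nonlocal manipulation, paralleling the arguments in \cite{BiagiCV, Jac19}, is where the bulk of the technical work concentrates.
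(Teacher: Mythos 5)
Your overall strategy---project $v_n$ onto $\mathcal M$ via $v_n^*=\min\{\max\{v_n,\underline u\},\overline u\}$, use the $\mathcal M$-minimality of $u$, and then compare $E_{\lambda,\epsilon}(v_n)$ with $E_{\lambda,\epsilon}(v_n^*)$ by testing the super-/sub-solution inequalities against $\phi_n,\psi_n$---is indeed the one the paper takes (the paper works with $\overline v_n=(u_n-\overline u)^+$, $\underline v_n=(\underline u-u_n)^+$, $w_n=u_n-\overline v_n+\underline v_n$, and cites the expansion of \cite{BiagiCV} for $E_{\lambda,\epsilon}(u_n)\ge E_{\lambda,\epsilon}(u)+R_n+T_n$). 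The strict separation argument you open with, via Lemma \ref{Comp} and \cite[Prop.\ 3.3]{BiagiCV}, also matches the paper's use of it to get $|\overline S_n|,\,|\underline S_n|\to 0$ and hence $\|\phi_n\|,\|\psi_n\|\to 0$.

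There is, however, a genuine gap in the way you propose to close the argument. After the super-solution inequality cancels the first-order pairing $B_\epsilon(\overline u,\phi_n)$ against $\lambda\int\phi_n/\overline u^{\delta(x)}+\int h(x,\overline u)\phi_n$, the remaining contribution of the nonlinearity is the \emph{second-order} remainder $\int\bigl(H(x,\overline u+\phi_n)-H(x,\overline u)-h(x,\overline u)\phi_n\bigr)\,dx$. Convexity of $H$ (from (H1)) shows this remainder is \emph{nonnegative}, hence it appears with the wrong sign in $E_{\lambda,\epsilon}(v_n)-E_{\lambda,\epsilon}(v_n^*)$; convexity gives you an obstacle, not help. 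And the ``strict margin'' $(\overline\lambda-\lambda)\int_{A_n}\phi_n/\overline u^{\delta(x)}\,dx$ you invoke cannot absorb it: that margin is linear in $\phi_n$ and $o(1)$, while the remainder is quadratic in $\phi_n$, and a chain of the form $E_{\lambda,\epsilon}(u)\le E_{\lambda,\epsilon}(v_n^*)\le E_{\lambda,\epsilon}(v_n)+o(1)<E_{\lambda,\epsilon}(u)+o(1)$ produces no contradiction because both sides differ from $E_{\lambda,\epsilon}(u)$ by $o(1)$. What actually controls the remainder is the Taylor identity $H(x,\overline u+\phi_n)-H(x,\overline u)-h(x,\overline u)\phi_n=\tfrac12\,\partial_s h(x,\overline u+\theta\phi_n)\phi_n^2$, the growth bound on $\partial_s h$ from (H5) (which your proposal never uses), and a H\"older splitting showing this term is bounded by $C\|\phi_n\|^2\bigl(\int_{A_n}e^{8\alpha\overline u^2}\bigr)^{1/4}$. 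Since $|A_n|\to 0$, this is $o(1)\cdot\|\phi_n\|^2$, which the positive quadratic term $\tfrac12 N_\epsilon^2(\phi_n)\ge C\|\phi_n\|^2$ dominates outright for large $n$; the argument then yields $R_n\ge 0$ exactly (no $o(1)$ slack), and the contradiction $E_{\lambda,\epsilon}(u_n)\ge E_{\lambda,\epsilon}(u)$ follows. Your proposal needs to replace the convexity/margin reasoning by precisely this (H5)-based, support-smallness-based quadratic domination; the Moser--Trudinger uniform bound on $\int e^{8\alpha\phi_n^2}$, valid because $\|\phi_n\|\to 0$, is also essential and unmentioned.
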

\begin{proof}
    We prove it by contradiction. Suppose $u$ is not a local minimizer of $E_{\lambda,\epsilon}.$ Then, there exists a sequence $\{u_n\}_{n\in\N}\subset X$ such that $E_{\lambda,\epsilon}(u_n)<E_{\lambda,\epsilon}(u)$ and $\|u_n-u\|<\frac{1}{n}.$ Since  $\lambda\in(0,\Lambda_\epsilon),$ we assume $\underline{u},$ and $\overline{u}$ are the sub solution and super solution of (\ref{ME}) respectively, obtained in the proof of Lemma \ref{L2}. We define
    $\overline{v}_n:=(u_n-\overline{u})^+,\;\underline{v}_n:=(u_n-\underline{u})^-$ and $w_n:=\max\{\underline{u},\min\{u_n,\overline{u}\}\}=u_n-\overline{v}_n+\underline{v}_n.$ Our first claim is that $|\overline{S}_n|=|\underline{S}_n|=0,$ where $\overline{S}_n:=\mathrm{supp}(\overline{v}_n)$ and $\underline{S}_n:=\mathrm{supp}(\underline{v}_n).$ To this concern, for any $\xi>0,$ we choose $\Omega_1\Subset\Omega_2\Subset\Omega$ such that $|\Omega\setminus\Omega_1|<\frac{\xi}{2}.$ One has $u_n-u\geq \overline{u}-u\geq 0$ in $\overline{S}_n.$ Using (H1), we have
    \begin{align}
        \mathcal{M}_\epsilon(\overline{u}-u)\geq \lambda\left(\frac{1}{\overline{u}^{\delta(x)}}-\frac{1}{u^{\delta(x)}}\right)= -\frac{\delta(x)}{(u+t_0(\overline{u}-u))^{1+\delta(x)}}(\overline{u}-u) \text{ in }\Omega_2,
    \end{align}
    for some $t_0\in(0,t).$
    Since $u\leq \overline{u}$ in $\Omega$ and $u\geq C(\Omega_2)$ in $\Omega_2$, we have 
    \begin{align}
        \begin{cases}
            \mathcal{M}_\epsilon(\overline{u}-u)+a(x)(\overline{u}-u)\geq 0 \text{ in }\Omega_2,\\
        \overline{u}-u\geq 0 \text{ in } \R^2,
        \end{cases}
    \end{align}
    for some non-negative $a\in L^\infty(\Omega).$ By \cite[Proposition 3.3]{BiagiCV}, there exists $C=C(\Omega_1,u)>0$ such that $\overline{u}-u\geq C$ in $\Omega_1.$ Thus, we have
    \begin{align}\label{A13}
        |\overline{S}_n\cap\Omega_1|=\int_{\overline{S}_n\cap\Omega_1}\frac{|u_n-u|^2}{|u_n-u|^2}\;dx\leq  \int_{\overline{S}_n\cap\Omega_1}\frac{|u_n-u|^2}{|\overline{u}-u|^2}\;dx\leq \frac{1}{C^2}\int_{\Omega}|u_n-u|^2\;dx.
    \end{align}
Since $u_n\to u$ strongly in $X$, (\ref{A13}) implies that $|\overline{S}_n\cap\Omega_1|\to 0$ as $n\to\infty.$ Hence, there exists $N_0$ such that for all $n\geq N_0$, we have
$$|\overline{S}_n|\leq |\Omega\setminus\Omega_1|+|\overline{S}_n\cap\Omega_1|<\frac{\xi}{2}+\frac{\xi}{2}=\xi.$$ Thus, $|\overline{S}_n|\to0$ as $n\to\infty.$ Similarly, we can prove that $|\underline{S}_n|\to 0$ as $n\to\infty.$ Now,
\begin{align}
    \int_\Omega|\nabla\overline{v}_n|^2\;dx=\int_{\overline{S}_n}|\nabla(u_n-\overline{u})|^2\;dx&\leq 2\left(\int_{\overline{S}_n}|\nabla(u_n-u)|^2\;dx+\int_{\overline{S}_n}|\nabla(u-\overline{u})|^2\;dx\right)\nonumber\\
    &\leq 2\left(\|u_n-u\|^2+\int_{\overline{S}_n}|\nabla(u-\overline{u})|^2\;dx\right),
\end{align}
which yields that $\|\overline{v}_n\|\to0$ as $n\to\infty.$ Similarly, one can prove that $\|\underline{v}_n\|\to0$ as $n\to\infty.$ Along the lines of proof of \cite[ Inequality (4.20), page 29]{BiagiCV}, we obtain
\begin{align}\label{R0}
    E_{\lambda,\epsilon}(u_n)\geq E_{\lambda,\epsilon}(u)+R_n+T_n,
\end{align}
where 
\begin{align*}
    R_n=\frac{1}{2}N_\epsilon^2(\overline{v}_n)&+B_\epsilon(\overline{u},\overline{v}_n)-\lambda\int_{\overline{S}_n}\frac{(\overline{u}+\overline{v}_n)^{1-\delta(x)}-\overline{u}^{1-\delta(x)}}{1-\delta(x)}\;dx\\
    &-\int_{\overline{S}_n}(H(x,\overline{u}+\overline{v}_n)-H(x,\overline{u}))\;dx,
\end{align*}
and 
\begin{align*}
    T_n=\frac{1}{2}N_\epsilon^2(\underline{v}_n)&-B_\epsilon(\underline{u},\underline{v}_n)-\lambda\int_{\underline{S}_n}\frac{(\underline{u}-\underline{v}_n)^{1-\delta(x)}-\underline{u}^{1-\delta(x)}}{1-\delta(x)}\;dx\\
    &-\int_{\underline{S}_n}(H(x,\underline{u}-\underline{v}_n)-H(x,\underline{u}))\;dx.
\end{align*}

 Since $\overline{u}$ is a super solution of (\ref{ME}), we have
\begin{align}
    R_n&\geq \frac{1}{2}N_\epsilon^2(\overline{v}_n)+\lambda\int_{\overline{S}_n}\frac{\overline{v}_n}{\overline{u}^{\delta(x)}}\;dx-\lambda\int_{\overline{S}_n}\frac{(\overline{u}+\overline{v}_n)^{1-\delta(x)}-\overline{u}^{1-\delta(x)}}{1-\delta(x)}\;dx\nonumber\\
    &+\int_{\overline{S}_n}h(x,\overline{u})\overline{v}_n\;dx-\int_{\overline{S}_n}(H(x,\overline{u}+\overline{v}_n)-H(x,\overline{u}))\;dx.
\end{align}
Applying mean value theorem, we get
\begin{align}
    R_n&\geq \frac{1}{2}N_\epsilon^2(\overline{v}_n)+\underbrace{C_1\lambda\int_{\overline{S}_n}\frac{\overline{v}_n^2}{(\overline{u}+\theta_1\overline{v}_n)^{1+\delta(x)}}\;dx}_{\geq 0}-C_2\int_{\overline{S}_n} \frac{\partial h}{\partial s}(x,\overline{u}+\theta_2\overline{v}_n)\overline{v}_n^2\;dx,
\end{align}
for some $\theta_1,\theta_2\in(0,1)$ and constants $C_1,\;C_2>0$ independent of $n.$ Utilizing (H5), for $\alpha>\alpha_0$, we deduce
\begin{align}\label{R1}
    R_n&\geq C\|\overline{v}_n\|^2-C\int_{\overline{S}_n}e^{\alpha(\overline{u}+\theta_2\overline{v}_n)^2}\overline{v}_n^2\;dx\nonumber\\
    &\geq C\|\overline{v}_n\|^2-C\left(\int_{\overline{S}_n}e^{2\alpha(\overline{u}+\theta_2\overline{v}_n)^2}\;dx\right)^\frac{1}{2}\|\overline{v}_n\|_{L^4(\Omega)}^2\nonumber\\
    &\geq C\|\overline{v}_n\|^2\left(1- \left(\int_{\overline{S}_n}e^{4\alpha(\overline{u}^2+\overline{v}_n^2)}\;dx\right)^\frac{1}{2}\right)\nonumber\\
    &\geq C\|\overline{v}_n\|^2\left(1- \left(\int_{\overline{S}_n}e^{8\alpha\overline{u}^2}\;dx\right)^\frac{1}{4}\left(\int_{\overline{S}_n}e^{8\alpha\overline{v}_n^2}\;dx\right)^\frac{1}{4} \right)\nonumber\\
    &\geq C\|\overline{v}_n\|^2\left(1- \left(\int_{\overline{S}_n}e^{8\alpha\overline{u}^2}\;dx\right)^\frac{1}{4} \right),
\end{align}
for large $n$. Since $|\overline{S}_n|\to 0,$ (\ref{R1}) yields that $R_n\geq 0$ for large $n.$ Similar way, one can prove that $T_n\geq 0$ for large $n.$ 
Using this facts, (\ref{R0}) implies $E_{\lambda,\epsilon}(u_n)\geq E_{\lambda,\epsilon}(u)$ for large $n,$ which contradicts the fact $E_{\lambda,\epsilon}(u_n)<E_{\lambda,\epsilon}(u).$ This completes the proof. 
\end{proof}
\textbf{ Proof of Theorem \ref{T1}:} Proof of Theorem \ref{T1} follows from Lemma \ref{Lm}, Lemma \ref{L2} and Lemma \ref{L3}.

\section{Preliminaries for Theorem \ref{T2}}\label{preT1}

Throughout this section, we assume that all the assumptions of Theorem \ref{T2} are satisfied. Furthermore, in order to apply Lemma \ref{UB} and Lemma \ref{LB}, we make use of the following assumptions $\epsilon\in(0,\epsilon_0)$, $\lambda\in(0,\Lambda)$ and $r_2=\min\{r_1,R_0\},$ where $r_1,\epsilon_0$ and the pair $(R_0,\Lambda)$ are obtained in Lemma \ref{UB}, Lemma \ref{LB} and in the proof of Lemma \ref{Lm}, respectively. We suppose that  $u_{\lambda,\epsilon}\in B(0,r_2)$ is the solution of (\ref{ME}) obtained in Lemma \ref{Lm} for every $\lambda\in(0,\Lambda),$ which is a minimizer of the functional $E_{\lambda,\epsilon}$ in $B(0,r_2).$ We fix a ball $B_{2r}(x_0)\Subset\Omega$ and from Lemma \ref{UB}, Lemma \ref{LB} we have 
\begin{align}\label{bdd}
    \tau_0\leq u_{\lambda,\epsilon}\leq \tau_1 \text{ in }B_{r}(x_0),
\end{align}
where $\tau_0,\tau_1$  are independent of $\epsilon.$ \\
 For $n>1$, we define the Moser function 
\begin{equation*}
M_n(x):=\frac{1}{\sqrt{2\pi}}\begin{cases}
    (\mathrm{log} n)^\frac{1}{2}&\text{ if }0\leq|x-x_0|\leq \frac{r}{n},\\
    \frac{\mathrm{log}(\frac{r}{|x-x_0|})}{(\mathrm{log}(n))^\frac{1}{2}}&\text{ if }\frac{r}{n}\leq |x-x_0|\leq r,\\
    0&\text{ if }|x-x_0|\geq r.
\end{cases} 
\end{equation*}
One can show that $M_n\in X$ with $\|M_n\|=1$ and $\int_\Omega M_n^2\;dx=O(\frac{1}{\mathrm{log}(n)}).$
The following two lemmas play an very important role in the proof of Theorem \ref{T2}.
\begin{Lemma}\label{CL1}
    For large enough $n\in\N$, we have 
     $$E_{\lambda,\epsilon}(u_{\lambda,\epsilon}+tM_n)\to -\infty\text{ as }t\to\infty.$$
\end{Lemma}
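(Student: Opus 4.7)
\smallskip

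\noindent\textbf{Proof proposal.} The plan is to expand $E_{\lambda,\epsilon}(u_{\lambda,\epsilon}+tM_n)$, bound the quadratic contribution from above by a polynomial in $t$, and then exploit the exponential growth of $H$ coming from $(H2)$ on the small inner disk where $M_n$ is a large constant. Since $E_{\lambda,\epsilon}$ is of the form (quadratic) $-$ (singular) $-$ (integral of $H$), and the singular term is non-negative, we may drop it to get an upper bound; the fight is then between the quadratic part and the integral of $H$.

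First, I would write $w_t:=u_{\lambda,\epsilon}+tM_n$ and expand
\[
\tfrac{1}{2}N_\epsilon^2(w_t)=\tfrac{1}{2}N_\epsilon^2(u_{\lambda,\epsilon})+tB_\epsilon(u_{\lambda,\epsilon},M_n)+\tfrac{t^2}{2}N_\epsilon^2(M_n).
\]
By Lemma \ref{locnon1} together with $\|M_n\|=1$, the quantity $N_\epsilon^2(M_n)$ is bounded independently of $t$ (and in fact of $n$). Hence there exist finite constants $A_1(n),A_2(n),A_3(n)$ (independent of $t$) with
\[
\tfrac{1}{2}N_\epsilon^2(w_t)\leq A_1(n)+A_2(n)t+A_3(n)t^2.
\]
Dropping the non-negative singular term, one obtains
\[
E_{\lambda,\epsilon}(w_t)\leq A_1(n)+A_2(n)t+A_3(n)t^2-\int_\Omega H(x,w_t)\,dx.
\]

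Next, I would get the crucial lower bound on $\int_\Omega H(x,w_t)\,dx$ by restricting to the small inner disk $B_{r/n}(x_0)$, where by the definition of the Moser function $M_n\equiv (\log n)^{1/2}/\sqrt{2\pi}$. On this disk, (\ref{bdd}) gives $w_t\geq \tau_0+t(\log n)^{1/2}/\sqrt{2\pi}$, which tends to $+\infty$ uniformly as $t\to\infty$. Fix any $\alpha\in(0,\alpha_0)$. By $(H2)$, there exist $C,s_0>0$ (uniform in $x$) such that $h(x,s)\geq C e^{\alpha s^2}$ for $s\geq s_0$, and therefore, integrating from $s/2$ to $s$, $H(x,s)\geq C' s\, e^{\alpha s^2/4}\geq e^{\alpha' s^2}$ for any $\alpha'<\alpha_0$ and all sufficiently large $s$ (uniformly in $x$). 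Applying this with $s=w_t$ on $B_{r/n}(x_0)$, for $t$ large enough one gets
\[
\int_\Omega H(x,w_t)\,dx\geq |B_{r/n}(x_0)|\cdot e^{\alpha'\bigl(t(\log n)^{1/2}/\sqrt{2\pi}\bigr)^2}=\frac{\pi r^2}{n^2}\,n^{\alpha' t^2/(2\pi)}.
\]

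Combining these two bounds, for $n\geq 2$ fixed and $t\to\infty$,
\[
E_{\lambda,\epsilon}(w_t)\leq A_1(n)+A_2(n)t+A_3(n)t^2-\frac{\pi r^2}{n^2}\,n^{\alpha' t^2/(2\pi)},
\]
and since the last term grows like $e^{c(n)t^2}$ with $c(n)=(\alpha'\log n)/(2\pi)>0$, it dominates any polynomial in $t$. This forces $E_{\lambda,\epsilon}(w_t)\to-\infty$ as $t\to\infty$, as desired.

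The only delicate step is verifying that the quadratic contribution is truly polynomial in $t$ with $n$-dependent but finite constants; this is routine given $\|M_n\|=1$ and Lemma \ref{locnon1}. Everything else is a clean application of $(H2)$ on the disk $B_{r/n}(x_0)$, exploiting the uniform positive lower bound (\ref{bdd}) on $u_{\lambda,\epsilon}$ to ensure $w_t$ is large there.
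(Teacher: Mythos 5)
Your proposal is correct and follows essentially the same route as the paper: drop the non-negative singular term, bound the quadratic part polynomially in $t$, and invoke $(H2)$ on the inner disk $B_{r/n}(x_0)$ where $M_n$ is constant to force exponential-in-$t^2$ growth of $\int_\Omega H(x,u_{\lambda,\epsilon}+tM_n)\,dx$. The one slip is quantitative: integrating $h(x,\cdot)\geq C e^{\alpha(\cdot)^2}$ from $s/2$ to $s$ gives $H(x,s)\gtrsim s\,e^{\alpha s^2/4}$, which dominates $e^{\alpha' s^2}$ only for $\alpha'\leq\alpha/4$ (not, as you wrote, for every $\alpha'<\alpha_0$), but this is harmless because any positive exponent on $t^2\log n$ already makes the energy tend to $-\infty$.
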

\begin{proof}
    Due to the property (H2), for $\alpha<\alpha_0$ there exist $C_1,\;C_2>0$ such that $$H(x,s)\geq C_1e^{\alpha s^2}-C_2 \text{ for all } s\geq 0.$$ Thus, we have
    \begin{align}\label{D00}
        E_{\lambda,\epsilon}(u_{\lambda,\epsilon}+tM_n)&=\frac{1}{2}N_\epsilon^2(u_{\lambda,\epsilon}+tM_n)-\lambda\int_\Omega\frac{(u_{\lambda,\epsilon}+tM_n)^{1-\delta(x)}}{1-\delta(x)}\;dx\nonumber\\
        &-\int_\Omega H(x,u_{\lambda,\epsilon}+tM_n)\;dx\nonumber\\
        &\leq\frac{1}{2}N_\epsilon^2(u_{\lambda,\epsilon})+\frac{t^2}{2}N_\epsilon^2(M_n)+tB_\epsilon(u_{\lambda,\epsilon},M_n)-\lambda\int_\Omega\frac{(u_{\lambda,\epsilon}+tM_n)^{1-\delta(x)}}{1-\delta(x)}\;dx\nonumber\\
        &-C_1\int_\Omega e^{\alpha (u_{\lambda,\epsilon}+tM_n)^2}\;dx+C_2|\Omega|\nonumber\\
        &\leq \frac{1}{2}N_\epsilon^2(u_{\lambda,\epsilon})+\frac{t^2}{2}N_\epsilon^2(M_n)+tB_\epsilon(u_{\lambda,\epsilon},M_n)-C_1\int_\Omega e^{\alpha (u_{\lambda,\epsilon}+tM_n)^2}\;dx+C_2|\Omega|
    \end{align}
where $C_1,C_2>0$ are independent of $n$ and $\epsilon.$ Using the definition of $M_n$ in \ref{D00}, we obtain
\begin{align}\label{D0}
        E_{\lambda,\epsilon}(u_{\lambda,\epsilon}+tM_n)&\leq C(1+t+t^2)-C_1\int_{B(x_0,\frac{r}{n})} e^{\alpha (u_{\lambda,\epsilon}+tM_n)^2}\;dx\nonumber\\
         &\leq C(1+t+t^2)-C_1\int_{B(x_0,\frac{r}{n})} e^{\alpha (tM_n)^2}\;dx\nonumber\\
        &\leq C(1+t+t^2)-C_1n^{\frac{\alpha t^2}{2\pi}-2},
\end{align}
where the constants $C,C_1>0$ are independent of $n$ and $\epsilon.$ Hence, the lemma follows from the estimation (\ref{D0}).
\end{proof}

\begin{Lemma}\label{CL2}
    Suppose $\lambda\in(0,\Lambda)$ is any fixed real. Then, there exists $\epsilon_1>0$ satisfying the following property: for every $\epsilon\in(0,\epsilon_1)$, there exists $N=N(\epsilon)\in\N$ such that 
  $$\max_{t\geq 0}E_{\lambda,\epsilon}(u_{\lambda,\epsilon}+tM_N)<E_{\lambda,\epsilon}(u_{\lambda,\epsilon})+\frac{2\pi}{\alpha_0}.$$
\end{Lemma}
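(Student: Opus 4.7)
\medskip

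\noindent\textbf{Proof proposal.} Write $u:=u_{\lambda,\epsilon}$ for brevity and set $\Gamma_n(t):=E_{\lambda,\epsilon}(u+tM_n)$. By Lemma \ref{CL1}, for every fixed large $n$ we have $\Gamma_n(t)\to -\infty$ as $t\to\infty$, and $\Gamma_n$ is continuous on $[0,\infty)$, so the maximum is attained at some $t_n^\ast=t_n^\ast(\epsilon,n)\geq 0$. If $t_n^\ast=0$ the conclusion is trivial, so I assume $t_n^\ast>0$ and use the first-order condition $\Gamma_n'(t_n^\ast)=0$, combined with the equation satisfied by $u$, to deduce
\begin{equation*}
t_n^\ast\,N_\epsilon^2(M_n)=\lambda\!\int_\Omega\!\Bigl[\tfrac{1}{(u+t_n^\ast M_n)^{\delta(x)}}-\tfrac{1}{u^{\delta(x)}}\Bigr]M_n\,dx+\int_\Omega [h(x,u+t_n^\ast M_n)-h(x,u)]M_n\,dx.
\end{equation*}
Since $M_n\geq 0$ the singular term is non-positive, so I obtain the one-sided bound $t_n^\ast N_\epsilon^2(M_n)\leq \int_\Omega h(x,u+t_n^\ast M_n)M_n\,dx$.

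The core of the argument is to turn this into a strict upper bound on $t_n^\ast$. On the inner disc $B(x_0,r/n)$ the Moser function is the constant $c_n=\sqrt{\log n/(2\pi)}$, and by (\ref{bdd}) one has $\tau_0\leq u\leq\tau_1$ there. Hypothesis (H6) means that for any prescribed $K>0$ there is $s_K$ with $h(x,s)\geq K e^{\alpha_0 s^2-s^\sigma}/s$ for $s\geq s_K$; inserting this, together with the elementary expansion $(\tau_0+t_n^\ast c_n)^2\geq (t_n^\ast c_n)^2+2\tau_0 t_n^\ast c_n$, gives a lower bound of the form
\begin{equation*}
\int_\Omega h(x,u+t_n^\ast M_n)M_n\,dx\,\geq\,\frac{CK}{t_n^\ast}\,n^{\alpha_0(t_n^\ast)^2/(2\pi)-2}\,e^{\,2\alpha_0\tau_0 t_n^\ast c_n-C(t_n^\ast c_n)^\sigma}.
\end{equation*}
Using $\sigma<1$, the linear term $2\alpha_0\tau_0 t_n^\ast c_n$ dominates the $(t_n^\ast c_n)^\sigma$ correction. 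First this forces $t_n^\ast$ to be bounded (otherwise the RHS blows up faster than the linear LHS), and then a logarithmic comparison yields the sharper estimate
\begin{equation*}
(t_n^\ast)^2\,\leq\,\frac{4\pi}{\alpha_0}\,-\,\frac{C_0}{\sqrt{\log n}},\qquad n\geq n_0,
\end{equation*}
for some absolute $C_0>0$ and $n_0\in\mathbb{N}$ independent of $\epsilon$.

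On the energy side I estimate $\Gamma_n(t_n^\ast)-E_{\lambda,\epsilon}(u)$ by using the equation for $u$ together with concavity of $s\mapsto s^{1-\delta(x)}/(1-\delta(x))$ (tangent from the right) and monotonicity of $H(x,\cdot)$ to discard the cross terms $t_n^\ast B_\epsilon(u,M_n)$, $-\int\frac{(u+t_n^\ast M_n)^{1-\delta}-u^{1-\delta}}{1-\delta}$ and $-\int[H(x,u+t_n^\ast M_n)-H(x,u)]$; what survives is $\tfrac12(t_n^\ast)^2 N_\epsilon^2(M_n)$ plus a singular remainder controlled by $\int M_n=O(1/\sqrt{\log n})$ (via the explicit computation for the Moser function and the positive lower bound of $u$ on $B_r(x_0)$). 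Writing $N_\epsilon^2(M_n)=1+\epsilon[M_n]_{H^s(\mathbb{R}^2)}^2$ and inserting the bound on $(t_n^\ast)^2$:
\begin{equation*}
\Gamma_n(t_n^\ast)-E_{\lambda,\epsilon}(u)\,\leq\,\frac{2\pi}{\alpha_0}\bigl(1+\epsilon[M_n]_{H^s}^2\bigr)\,-\,\frac{C_0/2}{\sqrt{\log n}}\,+\,\frac{C_1}{\sqrt{\log n}}.
\end{equation*}
Finally I fix $n=N\geq n_0$ large enough that $C_0/2-C_1>0$, and then choose $\epsilon_1$ so small that $\epsilon_1\cdot\tfrac{2\pi}{\alpha_0}[M_N]_{H^s}^2<\tfrac{C_0/2-C_1}{\sqrt{\log N}}$; for every $\epsilon\in(0,\epsilon_1)$ this $N=N(\epsilon)$ (which may in fact be taken constant) delivers the strict inequality.

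The main obstacle is the quantitative step producing the gain $C_0/\sqrt{\log n}$ below the Moser–Trudinger threshold $4\pi/\alpha_0$: one must track the linear-versus-sublinear competition $2\alpha_0\tau_0 t_n^\ast c_n$ versus $(t_n^\ast c_n)^\sigma$ that comes from the extra $e^{s^\sigma}$ factor in (H6), since a naive use of (H2) would only yield $(t_n^\ast)^2\leq 4\pi/\alpha_0+o(1)$ and therefore give the non-strict inequality. A secondary technical point is that the argument needs a uniform-in-$n$ bound $[M_n]_{H^s(\mathbb{R}^2)}^2\leq C_s$, which one either verifies directly from the explicit form of $M_n$ or quotes from the preliminaries on the mixed-operator norm.
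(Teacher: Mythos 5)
Your strategy is a direct reorganization of the paper's proof-by-contradiction: both rely on the same three ingredients (the first-order condition at the maximizer, hypothesis (H6) together with the uniform bounds $\tau_0\le u_{\lambda,\epsilon}\le\tau_1$ on $B_r(x_0)$, and the expansion of $E_{\lambda,\epsilon}(u_{\lambda,\epsilon}+tM_n)$ around $u_{\lambda,\epsilon}$). Where the paper assumes the maximum is $\ge E_{\lambda,\epsilon}(u_{\lambda,\epsilon})+\tfrac{2\pi}{\alpha_0}$, deduces a lower bound $t_{nm}^2\ge\tfrac{4\pi}{\alpha_0}(1-o(1))$, and then makes $\int h(x,\cdot)M_n$ blow up against the upper bound $M^\ast$ from the Euler equation, you run the argument forward: bound $t_n^\ast$ from the Euler equation and plug into the energy expansion. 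That reorganization is fine in principle and arguably more transparent, but it exposes a quantitative requirement that your estimates do not meet.

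The genuine gap is in the energy-side remainder. You bound the singular contribution by
\[
\lambda\int_\Omega\Bigl[\tfrac{t M_n}{u^{\delta(x)}}-\tfrac{(u+tM_n)^{1-\delta(x)}-u^{1-\delta(x)}}{1-\delta(x)}\Bigr]dx \;\le\; C_1\,t\int_\Omega M_n\,dx \;=\;O\bigl(1/\sqrt{\log n}\bigr),
\]
i.e.\ you apply the mean value theorem once and then throw away the difference $\tfrac1{u^{\delta}}-\tfrac1{(u+tM_n)^{\delta}}$, keeping only $\tfrac1{u^{\delta}}\le\tau_0^{-\delta_1}$. That gives a remainder of the \emph{same} order $1/\sqrt{\log n}$ as the gain $C_0/\sqrt{\log n}$ you extract from (H6). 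Your argument then hinges on $C_0/2-C_1>0$, where $C_0\sim\tau_0 t_{**}$ (coming from the shift $2\alpha_0\tau_0 t c_n$ in the exponent) and $C_1\sim \lambda\,\tau_0^{-\delta_1}r$ (coming from the singular remainder). These two constants have nothing to do with each other and neither depends on $n$, so the sentence "fix $n=N\ge n_0$ large enough that $C_0/2-C_1>0$" is a non sequitur: taking $n$ large does not change either constant, and the inequality may simply fail.

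The fix, which the paper implements in its inequality (\ref{D4}), is to apply the mean value theorem a second time, writing
\[
\tfrac{tM_n}{u^{\delta(x)}}-\tfrac{tM_n}{(u+\theta_1 tM_n)^{\delta(x)}} \;=\; \delta(x)\,\frac{\theta_1 t^2 M_n^2}{\bigl(u+\theta_2\theta_1 tM_n\bigr)^{1+\delta(x)}}\;\le\;\frac{\delta_1\,t^2 M_n^2}{\tau_0^{1+\delta_1}}\quad\text{on }B_r(x_0),
\]
so the singular remainder is $O\bigl(t^2\int M_n^2\bigr)=O(1/\log n)$ rather than $O(1/\sqrt{\log n})$. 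With this sharper bound, the gain $-C_0/(2\sqrt{\log n})$ you already have strictly dominates the remainder $+C_1'/\log n$ once $n$ is large, no constant comparison is needed, and your direct argument closes: fix such an $N$, then shrink $\epsilon_1$ so that $\epsilon_1\cdot\tfrac{2\pi}{\alpha_0}[M_N]_{H^s}^2$ does not spoil the strict inequality. Two minor points: the a priori boundedness of $t_n^\ast$ is more cleanly obtained from the paper's inequality (\ref{D0}) and the fact that $\Gamma_n(t_n^\ast)\ge\Gamma_n(0)=E_{\lambda,\epsilon}(u_{\lambda,\epsilon})$, rather than from the Euler equation alone; and the uniform bound on $[M_n]_{H^s}^2$ indeed follows from Lemma \ref{locnon1} and $\|M_n\|=1$, as you anticipated.
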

\begin{proof}
We prove it by contradiction. If the conclusion is not true, then there exists a decreasing sequence $\epsilon_m\to 0$ such that 
\begin{align*}
\max_{t\geq 0}E_{\lambda,\epsilon_m}(u_{\lambda,\epsilon_m}+tM_n)\geq E_{\lambda,\epsilon_m}(u_{\lambda,\epsilon_m})+\frac{2\pi}{\alpha_0}\text{ for all }n\in\N.
\end{align*}
One can easily prove that there exists $t_{nm}>0$ such that $$\max_{t\geq 0}E_{\lambda,\epsilon_m}(u_{\lambda,\epsilon_m}+tM_n)=E_{\lambda,\epsilon_m}(u_{\lambda,\epsilon_m}+t_{nm}M_n).$$ Hence, one has 
\begin{align}\label{D1}
    \frac{2\pi}{\alpha_0}\leq E_{\lambda,\epsilon_m}(u_{\lambda,\epsilon_m}+t_{nm}M_n)- E_{\lambda,\epsilon_m}(u_{\lambda,\epsilon_m})\text{ for all }n,\;m\in\N.
\end{align}
Furthermore, (\ref{D0}) guarantees that there exists $t^*>0$ such that $t_{nm}\leq t^*$ for all $n\;,m\in\N.$
We also have 
\begin{align*}
    \frac{d}{dt}E_{\lambda,\epsilon_m}(u_{\lambda,\epsilon_m}+tM_n)|_{t=t_{nm}}=0,
\end{align*}
which gives
\begin{align}\label{D5}
    \int_\Omega h(x, u_{\lambda,\epsilon_m+t_{nm}M_n})t_{nm}M_n\;dx&=t_{nm}^2N_{\epsilon_m}^2(M_n)+t_{nm}B_{\epsilon_m}(u_{\lambda,\epsilon_m},M_n)\nonumber\\
    &-\lambda t_{nm}\int_\Omega\frac{M_n}{(u_{\lambda,\epsilon_m+t_{nm}M_n})^{\delta(x)}}\;dx.
\end{align}
Thus, (\ref{D1}) yields
\begin{align}\label{D2}
    \frac{2\pi}{\alpha_0}&\leq E_{\lambda,\epsilon_m}(u_{\lambda,\epsilon_m}+t_{nm}M_n)-E_{\lambda,\epsilon_m}(u_{\lambda,\epsilon_m})\nonumber\\
    &\leq \frac{1}{2}\left(N_{\epsilon_m}^2(u_{\lambda,\epsilon_m}+t_{nm}M_n)-N_{\epsilon_m}^2(u_{\lambda,\epsilon_m})\right)-\lambda\int_\Omega \left(\frac{(u_{\lambda,\epsilon_m}+t_{nm}M_n)^{1-\delta(x)}-u_{\lambda,\epsilon_m}^{1-\delta(x)}}{1-\delta(x)} \right)\;dx\nonumber\\
    &-\int_\Omega\left( H(x,u_{\lambda,\epsilon_m}+t_{nm}M_n)-H(x,u_{\lambda,\epsilon_m})\right)\;dx\nonumber\\
    &\leq \frac{t^2_{{nm}}}{2}N_{\epsilon_m}^2(M_n)+B_{\epsilon_m}(u_{\lambda,\epsilon_m},t_{nm}M_n)-\lambda\int_\Omega \left(\frac{(u_{\lambda,\epsilon_m}+t_{nm}M_n)^{1-\delta(x)}-u_{\lambda,\epsilon_m}^{1-\delta(x)}}{1-\delta(x)} \right)\;dx\nonumber\\
    &-\int_\Omega\left( H(x,u_{\lambda,\epsilon_m}+t_{nm}M_n)-H(x,u_{\lambda,\epsilon_m})\right)\;dx\nonumber\\
    &\leq \frac{t^2_{nm}}{2}N_{\epsilon_m}^2(M_n)+\lambda\int_\Omega\left[\frac{t_{nm}M_n}{u_{\lambda,\epsilon_m}^{\delta(x)}}-\left(\frac{(u_{\lambda,\epsilon_m}+t_{nm}M_n)^{1-\delta(x)}-u_{\lambda,\epsilon_m}^{1-\delta(x)}}{1-\delta(x)} \right)\right]\;dx\nonumber\\
    &-\int_\Omega \left ( H(x,u_{\lambda,\epsilon_m}+t_{nm}M_n)-H(x,u_{\lambda,\epsilon_m})-h(x,u_{\lambda,\epsilon_m})t_{nm}M_n\right )\;dx
\end{align}
In the last inequality, we used the fact that $u_{\lambda,\epsilon_m}$ is a solution of (\ref{ME}). Using (H1), (H2) and (\ref{bdd}), \cite[Lemma 2.3]{Soares06} leads to conclude that
\begin{align}\label{D3}
    H(x,u_{\lambda,\epsilon_m}+t_{nm}M_n)-H(x,u_{\lambda,\epsilon_m})-h(x,u_{\lambda,\epsilon_m})t_{nm}M_n\geq -C(t_{nm}M_n)^2,
\end{align}
where $C>0$ is a constant independent of $n,m.$ Combining (\ref{D3}) and mean value theorem, (\ref{D2}) gives
\begin{align}\label{D4}
\frac{2\pi}{\alpha_0}&\leq \frac{t^2_{nm}}{2}N_{\epsilon_m}^2(M_n)+\lambda\int_\Omega\left[\frac{t_{nm}M_n}{u_{\lambda,\epsilon_m}^{\delta(x)}}-\frac{t_{nm}M_n}{(u_{\lambda,\epsilon_m}+\theta_1t_{nm}M_n)^{\delta(x)}}\right]\;dx+Ct_{nm}^2\int_\Omega M_n^2\;dx \nonumber\\
&\leq \frac{t^2_{nm}}{2}N_{\epsilon_m}^2(M_n)+\lambda\int_\Omega\frac{\delta(x)t_{nm}^2M_n^2}{(u_{\lambda,\epsilon_m}+\theta_2t_{nm}M_n)^{\delta(x)}}\;dx+Ct_{nm}^2\int_\Omega M_n^2\;dx\nonumber\\
&\leq \frac{t^2_{nm}}{2}N_{\epsilon_m}^2(M_n)+Ct_{nm}^2\int_\Omega M_n^2\;dx\nonumber\\
&\leq \frac{t_{nm}^2}{2}(1+C_1\epsilon_m)+C_2t_{nm}^2O(\frac{1}{\mathrm{log}(n)}),
\end{align}
where $C_1,\;C_2>0$ are two constants independent of $n,\;m$. In the second last inequality, we have used the fact (\ref{bdd}). From (\ref{D4}), we infer that $t_{nm}\geq t_*>0$ for all $n,\;m\in\N$. We also observe that
given any $n\in\N$ (with $\frac{1}{n}<\epsilon_0$, obtained in Lemma \ref{UB}), there exists $N(n)\in\N$ such that $\epsilon_m<\frac{1}{n}$ for all $m\geq N(n).$ Thus, for large enough $m\in\N$, (\ref{D4}) ensures that
\begin{align}\label{D50}
    \frac{\alpha_0t_{nm}^2}{4\pi}\geq 1-C_1\epsilon_m-C_2O(\frac{1}{\mathrm{log}(n)})\geq 1-\frac{C_1}{n}-C_2O(\frac{1}{\mathrm{log}(n)}).
\end{align}
Now, using \ref{bdd} and (H1), we obtain
\begin{align}\label{D6}
    \int_\Omega h(x, u_{\lambda,\epsilon_m}&+t_{nm}M_n)t_{nm}M_n\;dx\geq \int_{B(x_0,\frac{r}{n})} h(x, u_{\lambda,\epsilon_m}+t_{nm}M_n)t_{nm}M_n\;dx\nonumber\\
    &\geq \int_{B(x_0,\frac{r}{n})} h(x, \tau+t_{nm}(\frac{\mathrm{log}n}{2\pi})^\frac{1}{2})t_{nm}(\frac{\mathrm{log}n}{2\pi})^\frac{1}{2}\;dx\nonumber\\
    &=\int_{B(x_0,\frac{r}{n})} h(x, \tau+t_{nm}(\frac{\mathrm{log}n}{2\pi})^\frac{1}{2})\left(\tau+t_{nm}(\frac{\mathrm{log}n}{2\pi})^\frac{1}{2}\right)\left(\frac{t_{nm}(\frac{\mathrm{log}n}{2\pi})^\frac{1}{2}}{\tau+t_{nm}(\frac{\mathrm{log}n}{2\pi})^\frac{1}{2}}\right)\;dx.
\end{align}
Utilizing the facts $t_{nm}\geq t_*$, and (H6), we conclude that for large and fixed $n\in\N$ (to be determined later),
\begin{align}\label{D7}
 \int_\Omega h(x, u_{\lambda,\epsilon_m}&+t_{nm}M_n)t_{nm}M_n\;dx\geq \frac{1}{2}\int_{B(x_0,\frac{r}{n})} e^{\alpha_0\left(\tau+t_{nm}(\frac{\mathrm{log}n}{2\pi})^\frac{1}{2}\right)^2-\left(\tau+t_{nm}(\frac{\mathrm{log}n}{2\pi})^\frac{1}{2}\right)^\sigma}\;dx\nonumber\\
 &\geq \frac{Cr^2}{n^2}e^{\alpha_0\left(\tau+t_{nm}(\frac{\mathrm{log}n}{2\pi})^\frac{1}{2}\right)^2-\left(\tau+t_{nm}(\frac{\mathrm{log}n}{2\pi})^\frac{1}{2}\right)^\sigma}\nonumber\\
 &=Cr^2e^{\alpha_0\left(\tau+t_{nm}(\frac{\mathrm{log}n}{2\pi})^\frac{1}{2}\right)^2-\left(\tau+t_{nm}(\frac{\mathrm{log}n}{2\pi})^\frac{1}{2}\right)^\sigma-2\mathrm{log}n}\nonumber\\
 &\geq Ce^{\alpha_0t_{nm}(\frac{\mathrm{log}n}{2\pi})^\frac{1}{2}-\left(\tau+t_{nm}(\frac{\mathrm{log}n}{2\pi})^\frac{1}{2}\right)^\sigma}e^{2\left(\frac{\alpha_0t_{nm}^2}{4\pi}-1\right)\mathrm{log}n},
\end{align}
where $C>0$ is a constant independent of $n,m\in\N.$
Applying (\ref{D50}) in (\ref{D7}), we get
\begin{align}\label{D8}
     \int_\Omega h(x, u_{\lambda,\epsilon_m}&+t_{nm}M_n)t_{nm}M_n\;dx\geq Ce^{\alpha_0t_{nm}(\frac{\mathrm{log}n}{2\pi})^\frac{1}{2}-\left(\tau+t_{nm}(\frac{\mathrm{log}n}{2\pi})^\frac{1}{2}\right)^\sigma}e^{2\left(-\frac{C_1}{n}-C_2O(\frac{1}{\mathrm{log}n})\right)\mathrm{log}n}\nonumber\\
     &\geq C_3e^{\alpha_0t_{nm}(\frac{\mathrm{log}n}{2\pi})^\frac{1}{2}-\left(\tau+t_{nm}(\frac{\mathrm{log}n}{2\pi})^\frac{1}{2}\right)^\sigma},
\end{align}
for large $m\in\N,$ where $C_3>0$ is independent of $n,m.$ Since $t_{nm}$ is uniformly bounded, (\ref{D5}) leads to conclude that there exists a constant $M^*>0$ independent of $n,\;m$ such that 
\begin{align}\label{D9}
    \int_\Omega h(x, u_{\lambda,\epsilon_m}+t_{nm}M_n)t_{nm}M_n\;dx\leq M^*.
\end{align}
Since $\sigma<1$, we choose $n$ large enough such that
\begin{align}\label{D100}
    e^{\alpha_0t_{nm}(\frac{\mathrm{log}n}{2\pi})^\frac{1}{2}-\left(\tau+t_{nm}(\frac{\mathrm{log}n}{2\pi})^\frac{1}{2}\right)^\sigma}\geq \frac{2M^*}{C_3},
\end{align}
where the constant $C_3>0$ is obtained in (\ref{D8}). Combining (\ref{D8})-(\ref{D100}), we get a contradiction. This proves the lemma.
\end{proof}

\section{Proof of Theorem \ref{T2}}\label{PT2}
In order to apply Lemma \ref{CL2}, we assume $\epsilon\in(0,\epsilon_1)$ and $\lambda\in(0,\Lambda)$, where $\epsilon_1$ is obtained in Lemma \ref{CL2}. Before proceeding, we define  $$\mathcal{F}:=\{ u\in X: u\geq u_{\lambda,\epsilon} \text{ in }\Omega\}$$ and notice that the following Dichotomy holds:
\begin{enumerate}
    \item [($P_1$)]For every $r\in(0,r_2),$
     \begin{align*}
        \inf\{ E_{\lambda,\epsilon}(u): u\in\mathscr{F},\;\|u-u_{\lambda,\epsilon}\|=r\}=E_{\lambda,\epsilon}(u_{\lambda,\epsilon}) \text{ for every }r\in(0,r_2).
    \end{align*}
\item [($P_2$)]There exists $r\in(0,r_2)$ such that 
    \begin{align*}
        \inf\{ E_{\lambda,\epsilon}(u): u\in\mathscr{F},\;\|u-u_{\lambda,\epsilon}\|=r\}>E_{\lambda,\epsilon}(u_{\lambda,\epsilon}),
\end{align*}
\end{enumerate}
where $r_2$ and $u_{\lambda,\epsilon}$ are mentioned in the previous section.
In both the cases, we will prove the existence of another solution. \\
\textbf{Proof of Theorem \ref{T2}:}
We divide the proof into two parts.\\
\textbf{Case-I:} We suppose that  
    \begin{align*}
        \inf\{ E_{\lambda,\epsilon}(u): u\in\mathscr{F},\;\|u-u_{\lambda,\epsilon}\|=r\}=E_{\lambda,\epsilon}(u_{\lambda,\epsilon}) \text{ for every }r\in(0,r_2).
    \end{align*}
In this case, we prove that for small enough $r\in(0,\frac{r_2}{2})$, there exists a solution $v_{\lambda,\epsilon}\in X$ of (\ref{ME}) such that $\|u_{\lambda,\epsilon}-v_{\lambda,\epsilon}\|=r.$ To this aim, for fixed $r\in(0,r_2)$, we choose $t>0$ such that $0<r-t<r+t<2r<r_2$ and define
\begin{equation*}
    \mathscr{R}:=\{u\in \mathscr{F}: r-t\leq\|u_{\lambda,\epsilon}-u\|\leq r+t\},
\end{equation*}
which is closed in $X.$ Let $\{u_n\}\subset\mathscr{R}$ such that $\|u_{\lambda,\epsilon}-u\|=r$ and $E_{\lambda,\epsilon}(u_n)\to E_{\lambda,\epsilon}(u_{\lambda,\epsilon})$ as $n\to\infty.$ By Ekeland's variational principle, there exists a sequence $\{v_n\}\subset\mathscr{R}$ such that the following hold:
\begin{align}\label{D10}
    \begin{cases}
    E_{\lambda,\epsilon}(v_n)\leq E_{\lambda,\epsilon}(u_n)\leq E_{\lambda,\epsilon}(u_{\lambda,\epsilon})+\frac{1}{n},\\
    \|u_n-v_n\|\leq \frac{1}{n},\\
    E_{\lambda,\epsilon}(v_n)\leq E_{\lambda,\epsilon}(u)+\frac{1}{n}\|v_n-u\|\text{ for all }u\in\mathscr{R}.
\end{cases}
\end{align}
Since $\{v_n\}$ is a bounded sequence in $X,$ there exist a subsequence of $\{v_n\}$, still denoted by $\{v_n\}$ and $v_{\lambda,\epsilon}\in X$ such that $v_n\rightharpoonup v_{\lambda,\epsilon}$ weakly in $X$, $v_n\to v_{\lambda,\epsilon}$ strongly in $L^2(\Omega)$ and $v_n\to v_{\lambda,\epsilon}$ pointwise a.e. in $\Omega.$ We claim that $v_{\lambda,\epsilon}$ is a solution of the equation (\ref{ME}). To this end, suppose $\phi\in C^\infty_c(\Omega).$ For $\eta>0,$ define $\phi_n=(v_n+\eta\phi-u_{\lambda,\epsilon})^-$ and $$w_n=:v_n+\eta\phi+\phi_n=\max\{ v_n+\eta\phi,u_{\lambda,\epsilon}\}.$$
It is immediate that $w_n\in\mathscr{F}.$ Given any $\psi\in\mathscr{F}$, incorporating $u=v_n+\eta(\psi-v_n)$ in (\ref{D10}), we obtain
\begin{align}\label{D11}
-\frac{\eta}{n}\|v_n-\psi\|&\leq E_{\lambda,\epsilon}(v_n+\eta(\psi-v_n))-E_{\lambda,\epsilon}(v_n)\nonumber\\
&=\frac{1}{2}(N_\epsilon^2(v_n+\eta(\psi-v_n))-N_\epsilon^2(v_n))-\lambda\int_\Omega \frac{ \left( (v_n+\eta(\psi-v_n))^{1-\delta(x)}-v_n^{1-\delta(x)}\right )}{1-\delta(x)}\;dx\nonumber\\
&-\int_\Omega \left(H(x,v_n+\eta(\psi-v_n))-H(x,v_n)\right)\;dx\nonumber\\
&=\frac{1}{2}(N_\epsilon^2(v_n+\eta(\psi-v_n))-N_\epsilon^2(v_n))-\lambda\int_\Omega\frac{\eta(\psi-v_n)}{ (v_n+\eta\theta_1(\psi-v_n))^{\delta(x)}}\;dx\nonumber\\
&-\eta\int_\Omega h(x,v_n+\eta\theta_2(\psi-v_n))(\psi-v_n)\;dx,
\end{align}
where $\theta_1,\theta_2\in(0,1).$ Dividing both sides of (\ref{D11}) by $\eta$ and taking $\eta\to 0$, we get
\begin{align}\label{D12}
    -\frac{1}{n}\|\psi-v_n\|\leq B_\epsilon(v_n,\psi-v_n)-\lambda\int_\Omega\frac{\psi-v_n}{v_n^{\delta(x)}}\;dx-\int_\Omega h(x,v_n)(\psi-v_n)\;dx.
\end{align}
Putting $\psi=w_n$ in (\ref{D12}), one has
\begin{align}\label{D14}
    -\frac{1}{n}\|\eta\phi+\phi_n\|\leq B_\epsilon(v_n,\eta\phi+\phi_n)-\lambda\int_\Omega\frac{\eta\phi+\phi_n}{v_n^{\delta(x)}}\;dx-\int_\Omega h(x,v_n)(\eta\phi+\phi_n)\;dx.
\end{align}
By similar argument as in \cite[ page 37, equation 5.21]{BiagiCV}, we obtain
\begin{align}
    B_\epsilon(v_n,\eta\phi+\phi_n)\leq B_\epsilon(v_{\lambda,\epsilon},\eta\phi+\phi_\eta)+o(1),
\end{align}
where $\phi_\eta=(v_{\lambda,\epsilon}+\eta\phi-u_{\lambda,\epsilon})^-.$ By Lebesgue's dominated convergence theorem, we get
\begin{align}
   \int_\Omega\frac{\eta\phi+\phi_n}{v_n^{\delta(x)}}\;dx\to\int_\Omega\frac{\eta\phi+\phi_\eta}{v_{\lambda,\epsilon}^{\delta(x)}}\;dx \text{ as }n\to\infty,
\end{align}
and using Egoroff's theorem, we deduce
\begin{align}
    \int_\Omega h(x,v_n)(\eta\phi+\phi_n)\;dx\to \int_\Omega h(x,v_{\lambda,\epsilon})(\eta\phi+\phi_\eta)\;dx \text{ as }n\to\infty.
\end{align}
Using this facts, (\ref{D14}) leads to conclude that
\begin{align}\label{D13}
    \eta \left[ B_\epsilon(v_{\lambda,\epsilon},\phi)-\lambda\int_\Omega\frac{\phi}{v_{\lambda,\epsilon}^{\delta(x)}}\;dx-\int_\Omega h(x,v_{\lambda,\epsilon})\phi\;dx\right]&\geq  -B_\epsilon(v_{\lambda,\epsilon},\phi_\eta)+\lambda\int_\Omega\frac{\phi_\eta}{v_{\lambda,\epsilon}^{\delta(x)}}\;dx\nonumber\\
    &+\int_\Omega h(x,v_{\lambda,\epsilon})\phi_\eta\;dx.
\end{align}
Since $u_{\lambda,\epsilon}$ is a solution of (\ref{ME}), (\ref{D13}) yields
\begin{align}\label{D15}
     B_\epsilon(v_{\lambda,\epsilon},\phi)&-\lambda\int_\Omega\frac{\phi}{v_{\lambda,\epsilon}^{\delta(x)}}\;dx-\int_\Omega h(x,v_{\lambda,\epsilon})\phi\;dx
     \geq \frac{1}{\eta} B_\epsilon(u_{\lambda,\epsilon}-v_{\lambda,\epsilon},\phi_\eta)+\nonumber\\
     &\frac{1}{\eta}\left(\lambda\int_\Omega \left(\frac{1}{v_{\lambda,\epsilon}^{\delta(x)}}-\frac{1}{u_{\lambda,\epsilon}^{\delta(x)}}\right)\phi_\eta\;dx+\int_\Omega (h(x,v_{\lambda,\epsilon})-h(x,u_{\lambda,\epsilon}))\phi_\eta\;dx \right).
\end{align}
In order to show, R.H.S. is non-negative, we estimate the following quantities
\begin{align}
    \frac{1}{\eta} B_\epsilon(u_{\lambda,\epsilon}-v_{\lambda,\epsilon},\phi_\eta)&=\frac{1}{\eta} \int_{\Omega_\eta}\nabla(u_{\lambda,\epsilon}-v_{\lambda,\epsilon})\cdot\nabla(u_{\lambda,\epsilon}-v_{\lambda,\epsilon}-\eta\phi)\;dx\nonumber\\
    &+\frac{\epsilon}{\eta}\int_{\R^2}\int_{\R^2}\frac{((u_{\lambda,\epsilon}-v_{\lambda,\epsilon})(x)-(u_{\lambda,\epsilon}-v_{\lambda,\epsilon})(y))(\phi_\eta(x)-\phi_\eta(y))}{|x-y|^{2+2s}}\;dxdy\nonumber\\
    &\geq o(1)\text{ as }\eta\to 0.
\end{align}
Using mean value theorem and definition of $\phi_\eta$, we have
\begin{align}
\frac{\lambda}{\eta}\int_\Omega \left(\frac{1}{v_{\lambda,\epsilon}^{\delta(x)}}-\frac{1}{u_{\lambda,\epsilon}^{\delta(x)}}\right)\phi_\eta\;dx&=\frac{\lambda}{\eta}\int_{\Omega_\eta} \left(\frac{1}{v_{\lambda,\epsilon}^{\delta(x)}}-\frac{1}{u_{\lambda,\epsilon}^{\delta(x)}}\right)(u_{\lambda,\epsilon}-v_{\lambda,\epsilon})\;dx\nonumber\\
&-\lambda\int_{\Omega_\eta} \left(\frac{1}{v_{\lambda,\epsilon}^{\delta(x)}}-\frac{1}{u_{\lambda,\epsilon}^{\delta(x)}}\right)\phi\;dx\nonumber\\
&\geq \lambda \int_{\Omega_\eta}\frac{\delta(x)(v_{\lambda,\epsilon}-u_{\lambda,\epsilon})\phi}{(u_{\lambda,\epsilon}+\theta(v_{\lambda,\epsilon}-u_{\lambda,\epsilon}))^{1+\delta(x)}}\;dx=o(1)\text{ as }\eta\to 0.
\end{align}
We also have
\begin{align}\label{D16}
    \frac{1}{\eta}\int_\Omega (h(x,v_{\lambda,\epsilon})-h(x,u_{\lambda,\epsilon}))\phi_\eta\;dx&=\frac{1}{\eta}\int_{\Omega_\eta} (h(x,v_{\lambda,\epsilon})-
    h(x,u_{\lambda,\epsilon}))(u_{\lambda,\epsilon}-v_{\lambda,\epsilon})\;dx-\nonumber\\
    &\int_{\Omega_\eta} (h(x,v_{\lambda,\epsilon})-h(x,u_{\lambda,\epsilon}))\phi\;dx\nonumber\\
    &= -\frac{1}{\eta}\int_{\Omega_\eta} \frac {\partial h}{\partial s}(x,u_{\lambda,\epsilon}+\theta(v_{\lambda,\epsilon}-u_{\lambda,\epsilon}))(u_{\lambda,\epsilon}-v_{\lambda,\epsilon})^2\;dx-\nonumber\\
    &\int_{\Omega_\eta} (h(x,v_{\lambda,\epsilon})-h(x,u_{\lambda,\epsilon}))\phi\;dx,
\end{align}
for some $\theta\in(0,1)$. Utilizing the fact that $0\leq v_{\lambda,\epsilon}-u_{\lambda,\epsilon}\leq \eta|\phi|$ in $\Omega_\eta$, (\ref{D16}) implies
\begin{align}
    \frac{1}{\eta}\int_\Omega (h(x,v_{\lambda,\epsilon})-h(x,u_{\lambda,\epsilon}))\phi_\eta\;dx&\geq -\eta\int_{\Omega_\eta} \frac {\partial h}{\partial s}(x,u_{\lambda,\epsilon}+\theta(v_{\lambda,\epsilon}-u_{\lambda,\epsilon}))\phi^2\;dx-\nonumber\\
    &\int_{\Omega_\eta} (h(x,v_{\lambda,\epsilon})-h(x,u_{\lambda,\epsilon}))\phi\;dx=o(1) \text{ as }\eta\to 0
\end{align}
Taking $\eta\to 0$ in (\ref{D15}) and applying the above facts, we deduce
\begin{equation*}
    B_\epsilon(v_{\lambda,\epsilon},\phi)-\lambda\int_\Omega\frac{\phi}{v_{\lambda,\epsilon}^{\delta(x)}}\;dx-\int_\Omega h(x,v_{\lambda,\epsilon})\phi\;dx\geq 0 \text{ for all }\phi\in C^\infty_c(\Omega).
\end{equation*}
Hence, $v_{\lambda,\epsilon}\in X$ is a solution of equation (\ref{ME}). We claim $v_n\to v_{\lambda,\epsilon}$ in $X.$ To this aim, incorporating $\psi=v_{\lambda,\epsilon}$ in (\ref{D12}), one has
\begin{align*}
    -\frac{1}{n}\|v_n-v_{\lambda,\epsilon}\|&\leq B_\epsilon(v_n,v_{\lambda,\epsilon}-v_n)-\lambda\int_\Omega\frac{v_{\lambda,\epsilon}-v_n}{v_n^{\delta(x)}}\;dx-\int_\Omega h(x,v_n)(v_{\lambda,\epsilon}-v_n)\;dx\nonumber\\
    &\leq B_\epsilon(v_n-v_{\lambda,\epsilon},v_{\lambda,\epsilon}-v_n)+B_\epsilon(v_{\lambda,\epsilon},v_{\lambda,\epsilon}-v_n)-\lambda\int_\Omega\frac{v_{\lambda,\epsilon}-v_n}{v_n^{\delta(x)}}\;dx-\nonumber\\
    &\int_\Omega h(x,v_n)(v_{\lambda,\epsilon}-v_n)\;dx\nonumber\\
    &= -N_\epsilon^2(v_n-v_{\lambda,\epsilon})+B_\epsilon(v_{\lambda,\epsilon},v_{\lambda,\epsilon}-v_n)-\lambda\int_\Omega\frac{v_{\lambda,\epsilon}-v_n}{v_n^{\delta(x)}}\;dx\nonumber\\
    &-\int_\Omega h(x,v_n)(v_{\lambda,\epsilon}-v_n)\;dx,
\end{align*}
which implies
\begin{align}\label{D23}
    N_\epsilon^2(v_n-v_{\lambda,\epsilon})&\leq \frac{1}{n}\|v_n-v_{\lambda,\epsilon}\|+B_\epsilon(v_{\lambda,\epsilon},v_{\lambda,\epsilon}-v_n)-\lambda\int_\Omega\frac{v_{\lambda,\epsilon}-v_n}{v_n^{\delta(x)}}\;dx\nonumber\\
    &-\int_\Omega h(x,v_n)(v_{\lambda,\epsilon}-v_n)\;dx\nonumber\\
    &=o(1)+\lambda\int_\Omega\frac{v_n-v_{\lambda,\epsilon}}{v_n^{\delta(x)}}\;dx+\int_\Omega h(x,v_n)(v_n-v_{\lambda,\epsilon})\;dx.
\end{align}
Since $\frac{v_{\lambda,\epsilon}}{v_n^{\delta(x)}}\leq \frac{v_{\lambda,\epsilon}}{u_{\lambda,\epsilon}^{\delta(x)}}\in L^1(\Omega)$, by Lebesgue's dominated convergence theorem, we have
\begin{align}\label{D21}
    \int_\Omega\frac{v_{\lambda,\epsilon}}{v_n^{\delta(x)}}\;dx\to\int_\Omega v_{\lambda,\epsilon}^{1-\delta(x)}\;dx.
\end{align}
Furthermore, 
\begin{align}\label{D22}
    \lim{n\to\infty}\int_\Omega v_n^{1-\delta(x)}\;dx=\int_\Omega v_{\lambda,\epsilon}^{1-\delta(x)}\;dx.
\end{align}
We will prove that 
\begin{equation}\label{D20}
    \int_\Omega h(x,v_n)(v_n-v_{\lambda,\epsilon})\;dx\to 0 \text{ as }n\to\infty.
\end{equation}
In order to prove this fact, we use Egoroff's theorem. Let $\xi>0$ be any arbitrary real number. Since $h(x,v_n)(v_n-v_{\lambda,\epsilon})\to 0$ pointwise a.e. in $\Omega$, Egoroff's theorem guarantees that there exists $E\subset\Omega$, such that $|E|<\xi$ and $$h(x,v_n)(v_n-v_{\lambda,\epsilon})\to 0\text{ uniformly in } \Omega\setminus E.$$
There exists $N_0(\xi)\in\N$ suc that $$|h(x,v_n)(v_n-v_{\lambda,\epsilon})|\leq \frac{\xi}{2|\Omega\setminus E|}\text{ for all }n\geq N_0.$$
Thus, for $n\geq N_0$, we have
\begin{align}\label{D19}
    \int_\Omega|h(x,v_n)(v_n-v_{\lambda,\epsilon})|\;dx&\leq \int_{\Omega\setminus E}|h(x,v_n)(v_n-v_{\lambda,\epsilon})|\;dx+\int_E |h(x,v_n)(v_n-v_{\lambda,\epsilon})|\;dx\nonumber\\
    &\leq \frac{\xi}{2|\Omega\setminus E|}|\Omega\setminus E|+|E|^\frac{1}{2}\left(\int_E |h(x,v_n)(v_n-v_{\lambda,\epsilon})|^2\;dx\right)^\frac{1}{2}\nonumber\\
    &\leq \frac{\xi}{2}+\xi^2\left(\int_\Omega |h(x,v_n)(v_n-v_{\lambda,\epsilon})|^2\;dx\right)^\frac{1}{2}.
\end{align}
Now, for a fixed $\alpha>\alpha_0$, there exists $C>0$ such that $h(x,s)\leq Ce^{\alpha s^2}$ for all $s>0$ and for all $x\in\Omega.$ Using this fact, generalized H\"{o}lder inequality and Lemma \ref{Emb}, we get
\begin{align}\label{D17}
    \int_\Omega |h(x,v_n)(v_n-v_{\lambda,\epsilon})|^2\;dx&\leq C\int_\Omega e^{2\alpha v_n^2}|(v_n-v_{\lambda,\epsilon})|^2\;dx\nonumber\\
    &\leq C\int_\Omega e^{4\alpha (v_n-u_{\lambda,\epsilon})^2}e^{4\alpha u_{\lambda,\epsilon}^2}|(v_n-v_{\lambda,\epsilon})|^2\;dx\nonumber\\
    &\leq C\left(\int_\Omega e^{4p\alpha (v_n-u_{\lambda,\epsilon})^2}\;dx\right)^\frac{1}{p}\left(\int_\Omega e^{4q\alpha u_{\lambda,\epsilon}^2}\;dx\right)^\frac{1}{q}\left(\int_\Omega |v_n-v_{\lambda,\epsilon}|^{2l}\;dx\right)^\frac{1}{l}\nonumber\\
    &\leq C \left(\int_\Omega e^{4p\alpha \|v_n-u_{\lambda,\epsilon}\|^2\left(\frac{v_n-u_{\lambda,\epsilon}}{\|v_n-u_{\lambda,\epsilon}\|}\right)^2}\;dx\right)^\frac{1}{p}\nonumber\\
    &\leq C\left(\int_\Omega e^{4p\alpha 
    (r+t)^2\left(\frac{v_n-u_{\lambda,\epsilon}}{\|v_n-u_{\lambda,\epsilon}\|}\right)^2}\;dx\right)^\frac{1}{p}\nonumber\\
    &\leq C\left(\int_\Omega e^{16p\alpha 
    r^2\left(\frac{v_n-u_{\lambda,\epsilon}}{\|v_n-u_{\lambda,\epsilon}\|}\right)^2}\;dx\right)^\frac{1}{p}, 
\end{align}
where $\frac{1}{p}+\frac{1}{q}+\frac{1}{l}=1$ and $r+t<2r.$ Choose $r>0$ small enough such that $p\alpha r^2<\frac{\pi}{4}.$ Hence, applying Theorem \ref{Moser} in (\ref{D17}), we get
\begin{align}\label{D18}
    \int_\Omega |h(x,v_n)(v_n-v_{\lambda,\epsilon})|^2\;dx\leq C,
\end{align}
where the constant $C>0$ is independent of $n$. Therefore, (\ref{D19}) and (\ref{D18}) ensure
$$\int_\Omega h(x,v_n)(v_n-v_{\lambda,\epsilon})\;dx\to 0 \text{ as }n\to\infty.$$ Combining (\ref{D21}), (\ref{D22}), (\ref{D20}); the inequality (\ref{D23}) concludes that $\|v_n-v_{\lambda,\epsilon}\|\to 0\text{ as }n\to\infty.$ Consequently, $\|v_{\lambda,\epsilon}-u_{\lambda,\epsilon}\|=r.$ By Lemma \ref{Comp}, one has $0<u_{\lambda,\epsilon}<v_{\lambda,\epsilon}$. This completes Case-I.

\textbf{Case-II:} We assume there exists $r\in(0,r_2)$ such that 
    \begin{align*}
        \inf\{ E_{\lambda,\epsilon}(u): u\in\mathscr{F},\;\|u-u_{\lambda,\epsilon}\|=r\}>E_{\lambda,\epsilon}(u_{\lambda,\epsilon}).
\end{align*}
In order to show the existence of another solution, we consider 
\begin{equation*}
    \Gamma:=\{\gamma\in C([0,1],\mathscr{F}): \gamma(0)=u_{\lambda,\epsilon}, \|u_{\lambda,\epsilon}-\gamma(1)\|>r, E_{\lambda,\epsilon}(\gamma(1))<E_{\lambda,\epsilon}(u_{\lambda,\epsilon})\}.
\end{equation*}
Due to Lemma \ref{CL1}, $\Gamma\neq\emptyset.$ We define $$c_{\lambda,\epsilon}:=\inf_{\gamma\in\Gamma}\max_{t\in[0,1]}E_{\lambda,\epsilon}(\gamma(t)).$$
Similar way as in \cite[ inequality 5.33, page 41]{BiagiCV}, there exists a sequence $\{v_n\}_{n\in\N}\subset\mathscr{F}$ such that 
\begin{align}\label{G0}
   c_{\lambda,\epsilon}\leq E_{\lambda,\epsilon}(v_n)\leq c_{\lambda,\epsilon}+\frac{1}{n}
\end{align}
and
\begin{align}\label{G1}
   -\frac{C}{n}(1+N_\epsilon(u))\leq B_\epsilon(v_n,u-v_n)-\lambda\int_\Omega \frac{u-v_n}{v_n^{\delta(x)}}\;dx-\int_\Omega h(x,v_n)(u-v_n)\;dx \text{ for all }u\in\mathscr{F},
\end{align}
for some constant $C>0$ independent of $n$. Incorporating $u=2v_n$ in (\ref{G1}) we get
\begin{align}\label{G2}
    -\frac{C}{n}(1+N_\epsilon(v_n))\leq B_\epsilon(v_n,v_n)-\lambda\int_\Omega v_n^{1-\delta(x)}\;dx-\int_\Omega h(x,v_n)v_n\;dx.
\end{align}
Therefore, using (\ref{G0})-(\ref{G2}) and (\ref{G3}), for $\theta>2$ we have
\begin{align}
\left(\frac{1}{2}-\frac{1}{\theta}\right)\int_\Omega &h(x,v_n)v_n\;dx-\lambda\int_\Omega \left(\frac{1}{1-\delta(x)}-\frac{1}{2}\right)v_n^{1-\delta(x)}-M_1\nonumber\\
&=\frac{1}{2}\left(\int_\Omega h(x,v_n)v_n\;dx+\lambda\int_\Omega v_n^{1-\delta(x)}\;dx\right)-\nonumber\\
&\left(\frac{1}{\theta}\int_\Omega h(x,v_n)v_n\;dx+\lambda\int_\Omega\frac{v_n^{1-\delta(x)}}{1-\delta(x)}\;dx+M_1\right)\nonumber\\
&\leq \frac{1}{2}N_\epsilon^2(v_n)+\frac{C}{n}(1+N_\epsilon(v_n))-\left(\int_\Omega H(x,v_n)\;dx+\lambda\int_\Omega\frac{v_n^{1-\delta(x)}}{1-\delta(x)}\;dx\right)\nonumber\\
&\leq \frac{1}{2}N_\epsilon^2(v_n)+\frac{C}{n}(1+N_\epsilon(v_n))-\frac{1}{2}N_\epsilon^2(v_n)+c_{\lambda,\epsilon}+\frac{1}{n}\nonumber\\
&\leq o(1)N_\epsilon(v_n)+c_{\lambda,\epsilon}.
\end{align}
Thus, we deduce
\begin{align}
    \int_\Omega h(x,v_n)v_n\;dx\leq C\int_\Omega v_n^{1-\delta(x)}\;dx+o(1)N_\epsilon(v_n)+c_{\lambda,\epsilon},
\end{align}
which together with (\ref{G3}) imply
\begin{align}\label{G4}
    \int_\Omega H(x,v_n)\;dx\leq C\int_\Omega v_n^{1-\delta(x)}\;dx+o(1)N_\epsilon(v_n)+c_{\lambda,\epsilon}+M_1.
\end{align}

Combining (\ref{G0}), (\ref{G4}) and Lemma \ref{Emb}, one has
\begin{align}
     \frac{1}{2}N_\epsilon^2(v_n)&\leq C\int_\Omega v_n^{1-\delta(x)}\;dx+o(1)N_\epsilon(v_n)+2c_{\lambda,\epsilon}+M_1+\lambda\int_\Omega \frac{v_n^{1-\delta(x)}}{1-\delta(x)}\;dx+\frac{1}{n}\nonumber\\
    &\leq C(1+\|v_n\|+c_{\lambda,\epsilon})+o(1)N_\epsilon(v_n).
\end{align}
Consequently, the sequence $\{v_n\}$ is bounded in $X.$ There exist a subsequence of $\{v_n\}$, still denoted by $\{v_n\}$ and $v_{\lambda,\epsilon}\in X$ such that $v_n\to v_{\lambda,\epsilon}$ weakly in $X$, $v_n\to v_n$ pointwise a.e. in $\Omega$ and $v_n\to v_{\lambda,\epsilon}$ strongly in $L^q$ for every $1\leq q<\infty.$ Along the lines of proof of Case-I, one can prove $v_{\lambda,\epsilon}\in X$ is a solution of equation (\ref{ME}). The rest of the proof aims to conclude that $u_{\lambda,\epsilon}\neq v_{\lambda,\epsilon}$. It is enough to prove that $E_{\lambda,\epsilon}(u_{\lambda,\epsilon})\neq E_{\lambda,\epsilon}(v_{\lambda,\epsilon}).$ We prove it via contradiction. If possible, let $E_{\lambda,\epsilon}(u_{\lambda,\epsilon})=E_{\lambda,\epsilon}(v_{\lambda,\epsilon})$. In order to get a contradiction, we will show $v_n\to v_{\lambda,\epsilon}$ strongly in $X.$ To this end, we incorporate $u=v_{\lambda,\epsilon}$ in (\ref{G1}) and use boundedness of $\{v_n\}$ to get
\begin{align}
    N_\epsilon^2(v_n-v_{\lambda,\epsilon})\leq o(1)+\lambda\int_\Omega\frac{v_n-v_{\lambda,\epsilon}}{v_n^{\delta(x)}}\;dx+\int_\Omega h(x,v_n)(v_n-v_{\lambda,\epsilon})\;dx.
\end{align}
Utilizing (\ref{D21}) and (\ref{D22}), we deduce
\begin{align}\label{G000}
     N_\epsilon^2(v_n-v_{\lambda,\epsilon})\leq o(1)+\int_\Omega h(x,v_n)(v_n-v_{\lambda,\epsilon})\;dx.
\end{align}
We will prove that
\begin{align}\label{G00}
    \int_\Omega h(x,v_n)(v_n-v_{\lambda,\epsilon})\;dx\to 0 \text{ as }n\to\infty.
\end{align}
It follows from the facts (\ref{G4}) and boundedness of $\{v_n\}$ that
 \begin{equation}
     \lim_{n\to\infty}\int_\Omega H(x,v_n)\;dx=\int_\Omega H(x,v_{\lambda,\epsilon})\;dx.
 \end{equation}
Thus, (\ref{G0}) ensures that
\begin{equation}
\lim_{n\to\infty}N_\epsilon^2(v_n)=\underbrace{2\left(c_{\lambda,\epsilon}+\lambda\int_\Omega\frac{v_{\lambda,\epsilon}^{1-\delta(x)}}{1-\delta(x)}\;dx+\int_\Omega H(x,v_{\lambda,\epsilon})\;dx\right)}_{=L_{\lambda,\epsilon}}.
\end{equation}
Let $w_n=\frac{v_n}{N_\epsilon(v_n)}$ and $w=\frac{v_{\lambda,\epsilon}}{\sqrt{L_{\lambda,\epsilon}}}$. We have $w_n\rightharpoonup w$ weakly in $X.$ Since $0<\epsilon<\epsilon_1$, from Lemma \ref{CL2}, we get
\begin{equation*}
    c_{\lambda,\epsilon}-E_{\lambda,\epsilon}(u_{\lambda,\epsilon})<\frac{2\pi}{\alpha_0}.
\end{equation*}
 We choose $\beta>\alpha_0$ such that 
\begin{equation}
     c_{\lambda,\epsilon}-E_{\lambda,\epsilon}(v_{\lambda,\epsilon})=c_{\lambda,\epsilon}-E_{\lambda,\epsilon}(u_{\lambda,\epsilon})<\frac{2\pi}{\beta}<\frac{2\pi}{\alpha_0}.
\end{equation}
Since $\frac{\beta}{4\pi}<\frac{1}{2(c_{\lambda,\epsilon}-E_{\lambda,\epsilon}(u_{\lambda,\epsilon}))},$ we can choose $p>1$ such that 
\begin{equation}
    \frac{\beta}{4\pi}N_\epsilon^2(v_n)<p<\left(1-\frac{N_\epsilon^2(v_{\lambda,\epsilon})}{L_{\lambda,\epsilon}}\right)^{-1}\text{ for large }n.
\end{equation}
Since $\beta>\alpha_0$, by property (H2), there exists $q>1$ and $C>0$ (independent of $n$) such that
\begin{align}\label{G5}
    \int_\Omega |h(x,v_n)|^q\;dx\leq C\int_\Omega e^{\beta v_n^2}\;dx.
\end{align}
Due to \cite[Theorem 1.6]{PL}, we have
\begin{align}\label{G6}
    \int_\Omega e^{\beta v_n^2}\;dx\leq \int_\Omega e^{4\pi\left(\frac{\beta}{4\pi}N_\epsilon^2(v_n)\right)\left(\frac{v_n}{N_\epsilon(v_n)}\right)^2}\;dx\leq \int_\Omega e^{4\pi p\left(\frac{v_n}{N_\epsilon(v_n)}\right)^2}\;dx\leq C,
\end{align}
where $C>0$ is a constant independent of $n.$
Now, combining (\ref{G5}), (\ref{G6}), we deduce
\begin{align}
    \int_\Omega |h(x,v_n)(v_n-v_{\lambda,\epsilon})|^l\;dx\leq \|v_n-v_{\lambda,\epsilon}\|_{L^\frac{ql}{q-l}(\Omega)}^l\left(\int_\Omega |h(x,v_n)|^q\;dx\right)^\frac{l}{q}\leq C,
\end{align}
for $1<l<q$. Thus, by Egoroff's theorem, one can easily prove the fact (\ref{G00}). Consequently, (\ref{G000}) reveals $v_n\to v_{\lambda,\epsilon}$ strongly in $X.$ Since $E_{\lambda,\epsilon}$ is continuous so $E_{\lambda,\epsilon}(v_{\lambda,\epsilon})=c_{\lambda,\epsilon}$, which is a contradiction as $E_{\lambda,\epsilon}(u_{\lambda,\epsilon})=E_{\lambda,\epsilon}(v_{\lambda,\epsilon}).$ By Lemma \ref{Comp}, one has $0<u_{\lambda,\epsilon}<v_{\lambda,\epsilon}$ in $\Omega$. Hence, $u_{\lambda,\epsilon},\; v_{\lambda,\epsilon}\in X$ are two distinct positive solutions of equation (\ref{ME}).

\section*{Acknowledgment}
I would like to thank Prof. Kaushik Bal for several useful discussions and his invaluable suggestions.

\nocite{*}
\bibliographystyle{plain}
\bibliography{Reference.bib}

\begin{thebibliography}{10}

\bibitem{ADIPISA}
Adimurthi.
\newblock Existence of positive solutions of the semilinear {D}irichlet problem with critical growth for the {$n$}-{L}aplacian.
\newblock {\em Ann. Scuola Norm. Sup. Pisa Cl. Sci. (4)}, 17(3):393--413, 1990.

\bibitem{ADICCM}
Adimurthi and Jacques Giacomoni.
\newblock Multiplicity of positive solutions for a singular and critical elliptic problem in {${\Bbb R}^2$}.
\newblock {\em Commun. Contemp. Math.}, 8(5):621--656, 2006.

\bibitem{Alves20}
Claudianor~O. Alves, Carlos~Alberto Santos, and Thiago~Willians Siqueira.
\newblock Uniqueness in {$W_{loc}^{1, p(x)}(\Omega)$} and continuity up to portions of the boundary of positive solutions for a strongly-singular elliptic problem.
\newblock {\em J. Differential Equations}, 269(12):11279--11327, 2020.

\bibitem{Arora}
Rakesh Arora and Vicentiu~D. Radulescu.
\newblock Combined effects in mixed local-nonlocal stationary problems, 2021.

\bibitem{KB24}
Kaushik Bal and Stuti Das.
\newblock Multiplicity of solutions for mixed local-nonlocal elliptic equations with singular nonlinearity, 2024.

\bibitem{KB21}
Kaushik Bal, Prashanta Garain, and Tuhina Mukherjee.
\newblock On an anisotropic {$p$}-{L}aplace equation with variable singular exponent.
\newblock {\em Adv. Differential Equations}, 26(11-12):535--562, 2021.

\bibitem{Barrio15}
B.~Barrios, E.~Colorado, R.~Servadei, and F.~Soria.
\newblock A critical fractional equation with concave-convex power nonlinearities.
\newblock {\em Ann. Inst. H. Poincar\'e{} C Anal. Non Lin\'eaire}, 32(4):875--900, 2015.

\bibitem{Valdinoci23}
Stefano Biagi, Serena Dipierro, Enrico Valdinoci, and Eugenio Vecchi.
\newblock A {F}aber-{K}rahn inequality for mixed local and nonlocal operators.
\newblock {\em J. Anal. Math.}, 150(2):405--448, 2023.

\bibitem{BiagCCM}
Stefano Biagi, Dimitri Mugnai, and Eugenio Vecchi.
\newblock A {B}rezis-{O}swald approach for mixed local and nonlocal operators.
\newblock {\em Commun. Contemp. Math.}, 26(2):Paper No. 2250057, 28, 2024.

\bibitem{BiagiCV}
Stefano Biagi and Eugenio Vecchi.
\newblock Multiplicity of positive solutions for mixed local-nonlocal singular critical problems.
\newblock {\em Calc. Var. Partial Differential Equations}, 63(9):Paper No. 221, 45, 2024.

\bibitem{Biroudi}
Kheireddine Biroud.
\newblock Mixed local and nonlocal equation with singular nonlinearity having variable exponent.
\newblock {\em J. Pseudo-Differ. Oper. Appl.}, 14(1):Paper No. 13, 24, 2023.

\bibitem{Sarika24}
Reshmi Biswas, Sarika Goyal, and Konijeti Sreenadh.
\newblock Modified quasilinear equations with strongly singular and critical exponential nonlinearity.
\newblock {\em Adv. Nonlinear Anal.}, 13(1):Paper No. 20240019, 35, 2024.

\bibitem{LB}
Lucio Boccardo and Luigi Orsina.
\newblock Semilinear elliptic equations with singular nonlinearities.
\newblock {\em Calc. Var. Partial Differential Equations}, 37(3-4):363--380, 2010.

\bibitem{Silva}
S.~Buccheri, J.~V. da~Silva, and L.~H. de~Miranda.
\newblock A system of local/nonlocal {$p$}-{L}aplacians: the eigenvalue problem and its asymptotic limit as {$p\to\infty$}.
\newblock {\em Asymptot. Anal.}, 128(2):149--181, 2022.

\bibitem{Scuinzi17}
Annamaria Canino, Luigi Montoro, Berardino Sciunzi, and Marco Squassina.
\newblock Nonlocal problems with singular nonlinearity.
\newblock {\em Bull. Sci. Math.}, 141(3):223--250, 2017.

\bibitem{Jose16}
Jos\'e{} Carmona and Pedro~J. Mart\'inez-Aparicio.
\newblock A singular semilinear elliptic equation with a variable exponent.
\newblock {\em Adv. Nonlinear Stud.}, 16(3):491--498, 2016.

\bibitem{Crand}
M.~G. Crandall, P.~H. Rabinowitz, and L.~Tartar.
\newblock On a {D}irichlet problem with a singular nonlinearity.
\newblock {\em Comm. Partial Differential Equations}, 2(2):193--222, 1977.

\bibitem{Mingioni24}
Cristiana De~Filippis and Giuseppe Mingione.
\newblock Gradient regularity in mixed local and nonlocal problems.
\newblock {\em Math. Ann.}, 388(1):261--328, 2024.

\bibitem{Hitchhikersguide}
Eleonora Di~Nezza, Giampiero Palatucci, and Enrico Valdinoci.
\newblock Hitchhiker's guide to the fractional {S}obolev spaces.
\newblock {\em Bull. Sci. Math.}, 136(5):521--573, 2012.

\bibitem{ValdinociProc}
Serena Dipierro, Edoardo Proietti~Lippi, and Enrico Valdinoci.
\newblock Some maximum principles for parabolic mixed local/nonlocal operators.
\newblock {\em Proc. Amer. Math. Soc.}, 152(9):3923--3939, 2024.

\bibitem{serena}
Serena Dipierro and Enrico Valdinoci.
\newblock Description of an ecological niche for a mixed local/nonlocal dispersal: an evolution equation and a new {N}eumann condition arising from the superposition of {B}rownian and {L}\'evy processes.
\newblock {\em Phys. A}, 575:Paper No. 126052, 20, 2021.

\bibitem{LC}
Lawrence~C. Evans.
\newblock {\em Partial differential equations}, volume~19 of {\em Graduate Studies in Mathematics}.
\newblock American Mathematical Society, Providence, RI, second edition, 2010.

\bibitem{PGJGA}
Prashanta Garain.
\newblock On a class of mixed local and nonlocal semilinear elliptic equation with singular nonlinearity.
\newblock {\em J. Geom. Anal.}, 33(7):Paper No. 212, 20, 2023.

\bibitem{PG}
Prashanta Garain.
\newblock On the regularity and existence of weak solutions for a class of degenerate singular elliptic problem.
\newblock {\em Manuscripta Math.}, 174(1-2):141--158, 2024.

\bibitem{PG24}
Prashanta Garain, Wontae Kim, and Juha Kinnunen.
\newblock On the regularity theory for mixed anisotropic and nonlocal {$p$}-{L}aplace equations and its applications to singular problems.
\newblock {\em Forum Math.}, 36(3):697--715, 2024.

\bibitem{Juha22}
Prashanta Garain and Juha Kinnunen.
\newblock On the regularity theory for mixed local and nonlocal quasilinear elliptic equations.
\newblock {\em Trans. Amer. Math. Soc.}, 375(8):5393--5423, 2022.

\bibitem{GTCPAA}
Prashanta Garain and Tuhina Mukherjee.
\newblock Quasilinear nonlocal elliptic problems with variable singular exponent.
\newblock {\em Commun. Pure Appl. Anal.}, 19(11):5059--5075, 2020.

\bibitem{PU22}
Prashanta Garain and Alexander Ukhlov.
\newblock Mixed local and nonlocal {S}obolev inequalities with extremal and associated quasilinear singular elliptic problems.
\newblock {\em Nonlinear Anal.}, 223:Paper No. 113022, 35, 2022.

\bibitem{Jac19}
Jacques Giacomoni, Tuhina Mukherjee, and Konijeti Sreenadh.
\newblock A global multiplicity result for a very singular critical nonlocal equation.
\newblock {\em Topol. Methods Nonlinear Anal.}, 54(1):345--370, 2019.

\bibitem{Jac}
Jacques Giacomoni, Ian Schindler, and Peter Tak\'a\v~c.
\newblock Sobolev versus {H}\"older local minimizers and existence of multiple solutions for a singular quasilinear equation.
\newblock {\em Ann. Sc. Norm. Super. Pisa Cl. Sci. (5)}, 6(1):117--158, 2007.

\bibitem{Srenadh14}
Sarika Goyal and K.~Sreenadh.
\newblock On multiplicity of positive solutions for {$N$}-{L}aplacian with singular and critical nonlinearity.
\newblock {\em Complex Var. Elliptic Equ.}, 59(12):1636--1649, 2014.

\bibitem{Haitao}
Yang Haitao.
\newblock Multiplicity and asymptotic behavior of positive solutions for a singular semilinear elliptic problem.
\newblock {\em J. Differential Equations}, 189(2):487--512, 2003.

\bibitem{Hirano04}
Norimichi Hirano, Claudio Saccon, and Naoki Shioji.
\newblock Existence of multiple positive solutions for singular elliptic problems with concave and convex nonlinearities.
\newblock {\em Adv. Differential Equations}, 9(1-2):197--220, 2004.

\bibitem{Stampaccia00}
David Kinderlehrer and Guido Stampacchia.
\newblock {\em An introduction to variational inequalities and their applications}, volume~31 of {\em Classics in Applied Mathematics}.
\newblock Society for Industrial and Applied Mathematics (SIAM), Philadelphia, PA, 2000.
\newblock Reprint of the 1980 original.

\bibitem{Lazer}
A.~C. Lazer and P.~J. McKenna.
\newblock On a singular nonlinear elliptic boundary-value problem.
\newblock {\em Proc. Amer. Math. Soc.}, 111(3):721--730, 1991.

\bibitem{PL}
P.-L. Lions.
\newblock The concentration-compactness principle in the calculus of variations. {T}he limit case. {I}.
\newblock {\em Rev. Mat. Iberoamericana}, 1(1):145--201, 1985.

\bibitem{Moser}
J.~Moser.
\newblock A sharp form of an inequality by {N}. {T}rudinger.
\newblock {\em Indiana Univ. Math. J.}, 20:1077--1092, 1970/71.

\bibitem{ST16}
Tuhina Mukherjee and Konijeti Sreenadh.
\newblock Fractional elliptic equations with critical growth and singular nonlinearities.
\newblock {\em Electron. J. Differential Equations}, pages Paper No. 54, 23, 2016.

\bibitem{Tuhina19}
Tuhina Mukherjee and Konijeti Sreenadh.
\newblock On {D}irichlet problem for fractional {$p$}-{L}aplacian with singular non-linearity.
\newblock {\em Adv. Nonlinear Anal.}, 8(1):52--72, 2019.

\bibitem{Soares06}
Jo\~ao C.~N. P\'adua, Elves A.~B. Silva, and S\'ergio H.~M. Soares.
\newblock Positive solutions of critical semilinear problems involving a sublinear term on the origin.
\newblock {\em Indiana Univ. Math. J.}, 55(3):1091--1111, 2006.

\bibitem{Srenadh18}
S.~Prashanth, Sweta Tiwari, and K.~Sreenadh.
\newblock Very singular problems with critical nonlinearities in two dimensions.
\newblock {\em Commun. Contemp. Math.}, 20(2):1650067, 25, 2018.

\bibitem{sekhar19}
Kamel Saoudi, Sekhar Ghosh, and Debajyoti Choudhuri.
\newblock Multiplicity and {H}\"older regularity of solutions for a nonlocal elliptic {PDE} involving singularity.
\newblock {\em J. Math. Phys.}, 60(10):101509, 28, 2019.

\bibitem{Valdinoci24}
Xifeng Su, Enrico Valdinoci, Yuanhong Wei, and Jiwen Zhang.
\newblock Multiple solutions for mixed local and nonlocal elliptic equations.
\newblock {\em Math. Z.}, 308(3):Paper No. 40, 37, 2024.

\bibitem{Valdinoci25}
Xifeng Su, Enrico Valdinoci, Yuanhong Wei, and Jiwen Zhang.
\newblock On some regularity properties of mixed local and nonlocal elliptic equations.
\newblock {\em J. Differential Equations}, 416:576--613, 2025.

\bibitem{Yang}
Dejun Tai and Yanqin Fang.
\newblock Existence and uniqueness of positive solutions to fractional {L}aplacians with singular nonlinearities.
\newblock {\em Appl. Math. Lett.}, 119:Paper No. 107227, 7, 2021.

\bibitem{Tru67}
Neil~S. Trudinger.
\newblock On imbeddings into {O}rlicz spaces and some applications.
\newblock {\em J. Math. Mech.}, 17:473--483, 1967.

\end{thebibliography}
\end{document}